\documentclass[12pt]{amsart}
\usepackage{amsmath,amssymb,amsthm,amscd}
\usepackage{mathrsfs,MnSymbol}
\usepackage{fullpage}
\usepackage{enumitem}
\usepackage{float}
\usepackage{caption,subcaption}   
\usepackage{tikz-cd}
\usepackage[msc-links]{amsrefs}
\usepackage{hyperref}
\hypersetup{
  colorlinks   = true,	
  urlcolor     = blue,
  linkcolor    = blue,
  citecolor    = red
}
\theoremstyle{plain}
\newtheorem{theorem}{Theorem}
\newtheorem{lemma}[theorem]{Lemma}
\newtheorem{corollary}[theorem]{Corollary}
\newtheorem{proposition}[theorem]{Proposition}
\theoremstyle{definition}

\theoremstyle{remark}
\newtheorem{remark}[theorem]{Remark}
\numberwithin{theorem}{section} \numberwithin{equation}{section}
\allowdisplaybreaks
\AtBeginDocument{%
   \def\MR#1{}
}
\begin{document}
\title{On the Vinberg Family of K3 Surfaces}
\author{Adrian Clingher}
\address{Department of Mathematics and Statistics, University of Missouri - St. Louis, St. Louis, MO 63121}
\email{clinghera@umsl.edu}
\author{Andreas Malmendier}
\address{Department of Mathematics \& Statistics, Utah State University, Logan, UT 84322}
\email{andreas.malmendier@usu.edu}
\author{Brandon Williams}
\address{Institute for Mathematics, Heidelberg University, 69120 Heidelberg, Germany}
\email{bwilliams@mathi.uni-heidelberg.de}
\keywords{2-elementary K3 surfaces, elliptic fibrations, Gritsenko lift, Borcherds products}
\subjclass[2020]{11F37, 11F55, 11E39, 14J15, 14J27, 14J28}
\begin{abstract}
We study orthogonal modular forms associated with moduli spaces of lattice-polarized K3 surfaces whose generic transcendental lattices are of the form $T = H \oplus H \oplus L(-1)$ where $L$ is a root lattice of type $A_n$ or $D_n$. In Picard numbers $10$ through $17$, we use explicit Jacobian elliptic fibrations to construct modular forms on type IV domains associated with orthogonal groups $\mathrm{O}^+(T)$. We show that the coefficients of suitable Weierstrass models naturally realize generators for the corresponding graded rings of orthogonal modular forms. 
%In particular, for $T = H \oplus H \oplus D_8(-1)$ and $T = H \oplus H \oplus A_7(-1)$, we prove that the coefficients of a canonical elliptic fibration generate the corresponding graded rings of modular forms.
\end{abstract}
\maketitle
\section{Introduction}
Let $\mathcal{X}$ be a smooth complex projective K3 surface. The intersection form gives the second cohomology group $H^2(\mathcal{X}, \mathbb{Z})$ the structure of an even integral lattice. This lattice is the unique (up to isometry) unimodular even lattice of signature $(3, 19)$, which we denote $\Lambda_{K3} \cong H^{\oplus 3} \oplus E_8(-1)^{\oplus 2}$. Denote by $\mathrm{NS}(\mathcal{X})$ the N\'eron--Severi lattice. The rank of $\mathrm{NS}(\mathcal{X})$ is called the Picard number and ranges between $1$ and $20$. 
\par In this article, we shall study some specific lattice polarizations on $\mathcal{X}$. Let $S$ be an even lattice of signature $(1,\rho_S-1)$ admitting a primitive embedding $S\hookrightarrow \Lambda_{K3}$. Following the definition in \cites{MR4635303,MR1420220,Vinberg2018}, an $S$-polarization on $\mathcal{X}$ is a primitive lattice embedding $i:S\hookrightarrow \operatorname{NS}{(\mathcal{X})}$ whose image contains a big and nef class\footnote{As explained in \cite{MR4635303} this class has to be the image of a very irrational vector of positive norm in $S\otimes \mathbb{R}$.}. Isomorphism classes of $S$-polarized K3 surfaces form a quasi-projective moduli space $\mathscr M_S$ of dimension $20-\rho_S$; see \cite{MR1420220}.
\par Let $T$ be the orthogonal complement of $S$ in $\Lambda_{K3}$. Hodge structures (periods) for $S$-polarized K3 surfaces are known (see \cite{Vinberg2018}, as well as \cites{MR2030225,MR1420220}) to be classified, up to isomorphism, by the quotient space $ \mathrm{O}^+(T) \backslash \mathscr{D}^+(T)$, where 
\begin{equation}
\label{eqn:ring_automorphic_forms}
%  \mathscr{D}^+(L) \ = \ \Big\{ [z] \in \mathbb{P}\big( L \otimes \mathbb{C} \big) \colon \ \langle z , z \rangle =0 , \ \langle z , \overline{z} \rangle > 0 \Big\}^+ 
 \mathscr{D}^+(T) \ = \ \Big\{ [Z] \in \mathbb{P}\big(T \otimes \mathbb{C} \big) \colon \ Z \cdot Z =0 , \ Z \cdot  \overline{Z}  > 0 \Big\} 
 \end{equation} 
 is the Hermitian symmetric domain of type IV associated with $T$. Here, the index $+$ indicates a choice of connected component and $\mathrm{O}^+(T)  < \mathrm{O}(T)$ denotes the spinor kernel subgroup of integral isometries that preserve the connected component $\mathscr{D}^+(T)$. 
 %then $\widetilde{\mathrm{O}}(L)$ is the discriminant kernel subgroup of $\mathrm{O}^+(L)$, i.e. the subgroup corresponding to isometries that act trivially on the discriminant group of $L$.  
 %
 \par An appropriate version of the Torelli Theorem (see \cites{Vinberg2018, Vinberg2010}, as well as \cite{MR2030225}) allows one to identify the quasi-projective moduli space $\mathscr{M}_S$ 
 with the quotient $ \mathrm{O}^+(T) \backslash \mathscr{D}^+(T)$. It should also be noted that 
 $\mathrm{O}^+(T; \mathbb{R})  = \mathrm{SO}(2,20-\rho_S)$ acts transitively on $\mathscr{D}^+(T)$, which leads to an identification between $\mathscr{D}^+(T)$ and the bounded symmetric domain  
 \begin{equation*}
 \mathrm{SO}(2,20-\rho_S )  / \mathrm{SO}(2)  \times \mathrm{O}(20-\rho_S  )  \;.
\end{equation*}
The classifying period space $ \mathrm{O}^+(T) \backslash \mathscr{D}^+(T)$ may then be seen, from this point of view, as a quotient of a bounded symmetric domain by the action of a (modular) discrete group.
\par The present paper explores the connection between the algebraic descriptions for $S$-polarized K3 surfaces and the orthogonal modular forms associated with the domain  $ \mathscr{D}^+(T)$ and group $\mathrm{O}^+(T)$ for some particularly interesting families of K3 surfaces in Picard number $\rho_S=10, \dots, 16$.

As proved by Nikulin \cites{MR544937,MR633160,MR633160b,MR752938,MR3165023}, Picard number $14$ is the highest rank where there exists more than one 2-elementary, primitive sub-lattice of $\Lambda_{K3}$ for K3 surfaces with finite automorphism groups. The three possibilities, in this rank, are
\begin{equation}
\label{eqn:lattices_intro}
 H \oplus E_8(-1) \oplus A_1(-1)^{\oplus 4} , \quad  H \oplus D_8(-1) \oplus D_4(-1), \quad  H \oplus E_8(-1) \oplus D_4(-1).
\end{equation} 
Here, $E_n(-1)$, $D_n(-1)$, $A_n(-1)$ are the negative definite even lattices associated with  their namesake root systems. 

\par In previous work, we have investigated the K3 surfaces whose polarizing lattices are given by the former two lattices (and their generalizations); see \cite{MR4987329}. In this article, we will focus on the third lattice $S_{14}$ and its generalizations to Picard  numbers 10 through 16. Special cases of these K3 surfaces have appeared in the work of Vinberg in \cites{Vinberg2010, Vinberg2018}. The dual graph of smooth rational curves for the very general K3 surface $\mathcal{X}$ polarized by $S_{14}$  was determined by Vinberg in \cite{Vinberg2010}*{Table~2} and is given by Figure~\ref{fig:pic14vin}.
\begin{figure}
\centering
% Top figure
\begin{subfigure}{\textwidth}
    \centering
    \scalebox{0.5}{%
    \begin{tikzpicture}
    \tikzstyle{every node}=[draw,shape=circle];
    \node (w0) at (0,0) 		[very thick] {$E_1$};
    \node (w1) at (2,0)  		[very thick] {$E_2$};
    \node (w2) at (4,0)  		[very thick] {$E_3$};
    \node (w3) at (6,0)  		[very thick] {$E_4$};
    \node (w4) at (8,0)   		[very thick] {$E_5$};
    \node (w5) at (10,0) 		[very thick] {$E_6$};
    \node (w6) at (12,0) 		[very thick] {$E_7$};
    \node (w7) at (-2,0) 		[very thick] {$E_8$};
    \node (w8) at (-4,0) 		[very thick] {$E_9$};
    \node (w9) at (-6,0) 		[very thick] {$E_{10}$};
    \node (w10) at (-8,0) 		[very thick] {$E_{11}$};
    \node (w11) at (-10,0) 	[very thick] {$E_{12}$};
    \node (w13) at (-6,-2)  	[very thick] {$E_{13}$};
    \node (w14) at (10,-2) 	[very thick] {$E_{14}$};
    \node (w15) at (10,2)   	[very thick] {$E_{15}$};
    \draw[thick]
    (w0) -- (w1)
    (w1) -- (w2)
    (w2) -- (w3)
    (w3) -- (w4)
    (w4) -- (w5)
    (w5) -- (w6)
    (w0) -- (w7)
    (w7) -- (w8)
    (w8) -- (w9)
    (w9) -- (w10)
    (w10) -- (w11)
    (w13) -- (w9)
    (w14) -- (w5) 
    (w15) -- (w5);
    \end{tikzpicture}}
    \caption{Rational curves on $\mathcal{X}$ with $\mathrm{NS}(\mathcal{X})=S_{14}$.}
    \label{fig:pic14vin}
\end{subfigure} \\[1em]
% Bottom left
\begin{subfigure}{0.49\textwidth}
    \centering
    \scalebox{0.3}{%
    \begin{tikzpicture}
    \tikzstyle{every node}=[draw,shape=circle];
    \node (w0) at (0,0) 		[fill=green] {$E_1$};
    \node (w1) at (2,0)  		[fill=green] {$E_2$};
    \node (w2) at (4,0)  		[fill=green] {$E_3$};
    \node (w3) at (6,0)  		[fill=red] {$E_4$};
    \node (w4) at (8,0)   		[fill=blue] {$E_5$};
    \node (w5) at (10,0) 		[fill=blue] {$E_6$};
    \node (w6) at (12,0) 		[fill=blue] {$E_7$};
    \node (w7) at (-2,0) 		[fill=green] {$E_8$};
    \node (w8) at (-4,0) 		[fill=green] {$E_9$};
    \node (w9) at (-6,0) 		[fill=green] {$E_{10}$};
    \node (w10) at (-8,0) 		[fill=green] {$E_{11}$};
    \node (w11) at (-10,0) 	[fill=green] {$E_{12}$};
    \node (w13) at (-6,-2)  	[fill=green]{$E_{13}$};
    \node (w14) at (10,-2) 	[fill=blue] {$E_{14}$};
    \node (w15) at (10,2)   	[fill=blue] {$E_{15}$};
    \draw[thick]
    (w0) -- (w1)
    (w1) -- (w2)
    (w2) -- (w3)
    (w3) -- (w4)
    (w4) -- (w5)
    (w5) -- (w6)
    (w0) -- (w7)
    (w7) -- (w8)
    (w8) -- (w9)
    (w9) -- (w10)
    (w10) -- (w11)
    (w13) -- (w9)
    (w14) -- (w5) 
    (w15) -- (w5);    
    \end{tikzpicture}}
    \caption{First fibration on $\mathcal{X}$.}
    \label{fig:pic14vin_fib1}
\end{subfigure}
\hfill
% Bottom right
\begin{subfigure}{0.49\textwidth}
    \centering
    \scalebox{0.3}{%
    \begin{tikzpicture}
    \tikzstyle{every node}=[draw,shape=circle];
    \node (w0) at (0,0) 		[fill=green] {$E_1$};
    \node (w1) at (2,0)  		[fill=green] {$E_2$};
    \node (w2) at (4,0)  		[fill=green] {$E_3$};
    \node (w3) at (6,0)  		[fill=green] {$E_4$};
    \node (w4) at (8,0)   		[fill=green] {$E_5$};
    \node (w5) at (10,0) 		[fill=green] {$E_6$};
    \node (w6) at (12,0) 		{$E_7$};
    \node (w7) at (-2,0) 		[fill=green] {$E_8$};
    \node (w8) at (-4,0) 		[fill=green] {$E_9$};
    \node (w9) at (-6,0) 		[fill=green] {$E_{10}$};
    \node (w10) at (-8,0) 		[fill=green] {$E_{11}$};
    \node (w11) at (-10,0) 	[fill=red] {$E_{12}$};
    \node (w13) at (-6,-2)  	[fill=green]{$E_{13}$};
    \node (w14) at (10,-2) 	[fill=green] {$E_{14}$};
    \node (w15) at (10,2)   	[fill=green] {$E_{15}$};
    \draw[thick]
    (w0) -- (w1)
    (w1) -- (w2)
    (w2) -- (w3)
    (w3) -- (w4)
    (w4) -- (w5)
    (w5) -- (w6)
    (w0) -- (w7)
    (w7) -- (w8)
    (w8) -- (w9)
    (w9) -- (w10)
    (w10) -- (w11)
    (w13) -- (w9)
    (w14) -- (w5) 
    (w15) -- (w5);    
    \end{tikzpicture}}
    \caption{Second fibration on $\mathcal{X}$.}
    \label{fig:pic14vin_fib2}
\end{subfigure}

\caption{Configurations of rational curves and associated fibrations on $\mathcal{X}$.}
\label{fig:combined_pic14vin}
\end{figure}
\par It is easy to construct embeddings into the graph given by Figure~\ref{fig:pic14vin} of the reducible fibers for each Jacobian elliptic fibration that is supported on such a K3 surface $\mathcal{X}$. There are two possibilities: for the first fibration, the reducible fibers are shown embedded into the dual graph in Figure~\ref{fig:pic14vin_fib1}, where the green nodes represent a reducible fiber of type ${\color{green}\widetilde{E}_8}$, the blue nodes represent a reducible fiber of type ${\color{blue}\widetilde{D}_{4}}$, and the red node represents the class of the section. Similarly, for the second fibration the reducible fibers are shown embedded into the dual graph  in Figure~\ref{fig:pic14vin_fib2}, where the green nodes represent a reducible fiber of type ${\color{green}\widetilde{D}_{12}}$ and the red node represents the class of the section. A suitable generalization of the latter fibration  is present for all K3 surfaces with transcendental lattice $T=H \oplus H \oplus D_n(-1)$. In this article, we consider the lattices $T = H \oplus H \oplus D_n(-1)$ with  $2\le n \le 8$, where $D_3 = A_3$ and $D_2 = A_1^{\oplus 2}$. As we will show, in these cases the aforementioned elliptic fibration is key in constructing the orthogonal modular forms associated with the domain  $ \mathscr{D}^+(T)$ and group $\mathrm{O}^+(T)$. Our approach will be to first construct the orthogonal modular forms associated with the period domains for a second family of lattice-polarized K3 surfaces, namely those for which the generic transcendental lattice is $H \oplus H \oplus A_n(-1)$ for $n \le 7$ in Picard numbers 11 through 18. We then use this to construct the modular forms for the Vinberg family with $T = H \oplus H \oplus D_n(-1)$ with  $2\le n \le 8$ in Picard numbers 10 through 16. 
\section*{Statement of results}
In this article we prove the following:

\begin{theorem}\label{thm:main}
\begin{enumerate}[
    label=(\roman*),
    wide,
    labelindent=0pt,
    itemsep=4pt,
    topsep=0pt,
    parsep=0pt,
    partopsep=0pt
]
\item \label{thm:partI}
The general $S$-polarized K3 surface with $T = S^{\perp} = H \oplus H \oplus D_8(-1)$ carries a unique (up to automorphism) Jacobian elliptic fibration with Weierstrass form $$y^2 = x^3 + (u^3 + F_4 u + F_6) x^2 + (G_8 u^2 + G_{10} u + G_{12})x + H_8 u^5 + H_{10} u^4 + H_{12} u^3 + H_{14} u^2 + H_{16} u+ H_{18}.$$ The coefficients $F_k, G_k, H_k$ are modular forms with respect to the orthogonal group $\mathrm{O}^+(T)$ and generate the graded ring $M(\mathrm{O}^+(T)) = M(\Gamma_{D_8})$ as in Theorem \ref{thrm:freealg}.
\item \label{thm:partII}
The general $S$-polarized K3 surface with $T = S^{\perp} = H \oplus H \oplus A_7(-1)$  carries a unique (up to automorphism) Jacobian elliptic fibration with Weierstrass form $$y^2 = x^3 + (u^3 + f_4 u + f_6) x^2 + (g_4 u^4 + g_6 u^3 + g_8 u^2 + g_{10} u + g_{12}) x + (h_5 u^2 + h_7 u + h_9)^2.$$ The coefficients $f_k, g_k, h_k$ are modular forms with respect to the orthogonal group $\mathrm{\tilde O}(T)$ and generate the graded ring $M(\mathrm{\tilde O}(T)) = M(\Gamma_{A_7}).$
\end{enumerate}
\end{theorem}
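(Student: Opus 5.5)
Both parts follow the same three-stage route: construct the fibration, identify its coefficients as modular forms, and compare with the known structure of the graded ring. The generators of $M(\Gamma_{D_8})$ and $M(\Gamma_{A_7})$ are stated in Theorem~\ref{thrm:freealg}; they are free algebras built from Jacobi forms via Gritsenko lifts and Borcherds products.

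\textbf{Step 1: the fibration.} For (i), the relevant fibration is the generalization of the fibration in Figure~\ref{fig:pic14vin_fib2}.
\begin{itemize}
\item Use the Kneser--Nishiyama method to classify the Jacobian elliptic fibrations on the general $S$-polarized surface. For $T=H\oplus H\oplus D_8(-1)$ one has $S = H\oplus E_8(-1)\oplus E_8(-1)^{\perp\ldots}$, i.e.\ $S\cong H\oplus D_{16}^{\perp}$-type.
\item Show that exactly one fibration has a single reducible fiber of type $\widetilde{D}_{12}$ together with trivial Mordell--Weil group, and verify that the trivial lattice matches $S$.
\item Take the Weierstrass model $y^2=x^3+a(u)x^2+b(u)x+c(u)$ with $\deg a\le 4$, $\deg b\le 8$, $\deg c\le 12$. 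Impose the $I^*_{8}$-type (i.e.\ $\widetilde D_{12}$) fiber at $u=\infty$ by the Tate algorithm, and normalize by the affine $u$-translations and the scalings.
\item Count dimensions: the remaining parameters $F_4,F_6,G_8,G_{10},G_{12},H_8,\dots,H_{18}$ should number $11$. This gives one projective parameter more than $\dim = 20-\rho = 10$, consistent with a weighted projective space.
\item For (ii), impose a $2$-torsion-like condition: the constant term is a square $(h_5u^2+h_7u+h_9)^2$. This produces the extra $A_7$ structure (an $I_{2}$-type degeneration data and an extra section), giving $\widetilde O(T)$ level.
\end{itemize}

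\textbf{Step 2: modularity of the coefficients.}
\begin{itemize}
\item Weights come from the $\mathbb{C}^*$-scaling $(u,x,y)\mapsto(\lambda^2u,\lambda^6x,\lambda^9y)$ after normalization. This makes each coefficient of index $k$ a section of $\mathcal{L}^{\otimes k}$ (up to a uniform factor) on the moduli space.
\item Via the Torelli identification $\mathscr M_S\cong \mathrm{O}^+(T)\backslash\mathscr D^+(T)$, these become meromorphic modular forms of weight $k$. Holomorphy at interior points follows because the Weierstrass model degenerates only on Heegner divisors where the coefficients stay finite. At cusps, Koecher's principle applies since the dimension is $\ge 3$.
\item To pin them down precisely, restrict to the sublocus $T\to H\oplus H\oplus A_1(-1)^{2}$ or to $H\oplus H$. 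There the fibration specializes to the Kummer/Clingher--Doran normal form with known Siegel or Hilbert modular coefficients ($\psi_4,\psi_6,\chi_{10},\chi_{12}$ etc.). Compare Fourier--Jacobi expansions with the Gritsenko lifts of Theorem~\ref{thrm:freealg}.
\end{itemize}

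\textbf{Step 3: generation.}
\begin{itemize}
\item Show that the map from the weighted projective space of coefficients to the Baily--Borel compactification is birational (in fact an isomorphism). Injectivity follows from the uniqueness of the normal form up to the residual scaling. Then the coefficients are algebraically independent of the same weights as the generators in Theorem~\ref{thrm:freealg}.
\item By comparing Hilbert–Poincaré series, the subring they generate has the correct weights and full transcendence degree. Injectivity of the period map then forces equality of the graded rings.
\end{itemize}

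\textbf{Main obstacle.} The main obstacle is Step 2's identification of each coefficient with an explicit modular form. Here I would first compute the $A_7$ case via theta-block Jacobi forms of lattice index $A_7$, then obtain the $D_8$ case by the index-two extension $A_7\subset D_8$. That extension corresponds to quotienting the $A_7$ family by the van Geemen–Sarti involution, which turns the square $(h_5u^2+h_7u+h_9)^2$ into the quintic $H$-polynomial.
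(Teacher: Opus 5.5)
There are genuine gaps. The main ones are in Step~1, which is the geometric input the rest of the argument depends on, and in the approach you give under \emph{Main obstacle}.

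\textbf{Step~1, part (i).} $S=H\oplus D_8(-1)$ has rank $10$, so it cannot contain a $\widetilde D_{12}$ configuration. The displayed model has $\operatorname{ord}_\infty(a,b,c)=(1,6,7)$ and $\operatorname{ord}_\infty\Delta=10$. That is a single $I_4^*$ fiber: frame $D_8$, trivial Mordell--Weil group. Imposing $I_8^*$ via Tate's algorithm, as you propose, forces $H_8=H_{10}=H_{12}=H_{14}=0$. This lands in the six-dimensional Picard-rank-$14$ subfamily and contradicts your own count of eleven parameters. Also, Kneser--Nishiyama only lists frames. \emph{Unique up to automorphism} requires the Festi--Veniani multiplicity $[\mathrm{O}(q_K):\operatorname{Im}\mathrm{O}(K)]=1$, which is what the paper computes.

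\textbf{Step~1, part (ii).} Making the constant term a square is not a $2$-torsion condition and produces no $I_2$ fiber. The $x$-translation is used to make $c(u)$ a square instead of killing the $u^4,u^3$ terms of $b(u)$; this is why $g_4u^4+g_6u^3$ survives. The result is the infinite-order section $x=0$, $y=\pm(h_5u^2+h_7u+h_9)$. It meets the far simple component of the $I_4^*$ fiber and has height $4-2=2$. You then need two facts your plan does not supply. First, the extension of $H\oplus D_8(-1)$ by this section (rank $11$, discriminant $8$) has orthogonal complement $H^{\oplus 2}\oplus A_7(-1)$, checked by comparing discriminant forms via Nikulin. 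Second, the resulting frame has multiplicity one.

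\textbf{Explicit identification.} Your route here would also fail. $A_7\subset D_8$ is not an index-two extension, since the ranks differ. No van Geemen--Sarti quotient links the two families either: that construction needs a $2$-torsion section, which neither family has, and it preserves the dimension of moduli, whereas here the dimensions are $9$ and $10$.

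In fact $H^{\oplus2}\oplus A_7(-1)$ is the orthogonal complement in $T$ of the spinor vector $\tfrac12(1,\dots,1)\in D_8(-1)^\vee$. So the $A_7$ family is a Heegner divisor of discriminant $-1$ inside the $D_8$ moduli space. The paper proceeds as follows:
\begin{itemize}
\item it writes $H_8=-G_4^2$;
\item it restricts to the divisor via the shift $x\mapsto x\pm\big(G_4u+\frac{h_5^2-G_{10}}{2G_4}\big)$, which gives \eqref{eq:D8toA7};
\item it uses that this divisor is cut out by a Borcherds product of weight $128$, so restriction is injective in all weights $\le 18$.
\end{itemize}

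By contrast, restricting to the high-codimension loci $H^{\oplus 2}\oplus A_1(-1)^{\oplus 2}$ or $H^{\oplus 2}$ cannot pin the forms down, because restriction there has a large kernel. For the $A_7$ coefficients the paper instead climbs $A_1\subset A_2\subset\cdots\subset A_7$. At each step the new lowest coefficient ($h_9,g_8,h_7,g_6,h_5,g_4$) is a Borcherds product cutting out the previous family. The remaining ambiguity is removed by further Borcherds products such as $D$ (weight $26$) and $R_{20}$.

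\textbf{What does work.} Your modularity argument in Step~2 is the same abstract argument the paper uses. Given it, your Step~3 is sound and makes rigorous what the paper only asserts at the end of the proof of Theorem~\ref{thm:weq_An}. Algebraically independent forms whose weights match those of a free generating set have the same Hilbert--Poincar\'e series, hence generate. So once Step~1 is repaired, the explicit identification is not needed for the statement itself.

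You must, however, compare against the correct row of Figure~\ref{fig:freealg}. The weights $4,6,8,8,10,10,12,12,14,16,18$ form the $C_8$ row, i.e.\ $M(\mathrm{O}^+(T))$ with $\mathrm{O}^+(T)=\Gamma_{C_8}$. The $D_8$ row, with two generators of weight $4$, is $M(\tilde{\mathrm O}(T))$; there $H_8=-G_4^2$ is not a generator, so comparing against that row fails.
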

The significance of Theorem~\ref{thm:main} is that it provides a direct geometric realization of the generators of free algebras of orthogonal modular forms, in terms of explicit families of K3 surfaces of Picard number ten and eleven. While the freeness of the corresponding modular form rings was previously established by Vinberg \cites{Vinberg2010,Vinberg2018} and, more generally, by Wang--Williams \cite{WW20}, the generators themselves were not known to arise naturally from algebraic models of the associated K3 surfaces. Theorem~\ref{thm:main} shows that the coefficients of canonical Jacobian elliptic fibrations give precisely such generators for $D_8$ and $A_7$.
\section{The Vinberg family}
\subsection{Facts about Jacobian elliptic K3 surfaces}
We begin by recalling standard notions from lattice theory that will be used throughout. By a lattice, we always mean an even integral lattice, i.e. a free $\mathbb{Z}$-module $L$ of finite rank equipped with a nondegenerate integer-valued quadratic form $Q_L$. For lattices $L_1$ and $L_2$, we denote by $L_1 \oplus L_2$ their orthogonal direct sum. For $\lambda \in \mathbb{Z}$, the lattice $L(\lambda)$ is obtained from $L$ by scaling its quadratic form by $\lambda$. Given a symmetric matrix $m$, we write $\langle m \rangle$ for the lattice having $m$ as Gram matrix in a suitable basis. The symbols $A_n$, $D_n$, and $E_n$ refer to the positive-definite root lattices associated with the corresponding root systems, while $H$ denotes the unique even unimodular lattice of signature $(1,1)$.

Let $L$ be a lattice of rank $\rho$. Its discriminant group is defined as $D(L) = L^\vee / L$, equipped with the discriminant quadratic form $Q_L \colon D(L) \to \mathbb{Q}/\mathbb{Z}$. The lattice $L$ is called \emph{2-elementary} if $D(L)$ is an elementary abelian $2$-group, i.e.  $D(L) \cong (\mathbb{Z}/2\mathbb{Z})^\ell$, where $\ell$ is the length of $L$, defined as the minimal number of generators of $D(L)$. In this case one also defines the parity $\delta \in \{0,1\}$ by setting $\delta = 0$ if $Q_L$ takes values in $\frac{1}{2}\mathbb{Z}/\mathbb{Z}$ for all elements of $D(L)$, and $\delta = 1$ otherwise. A theorem of Nikulin asserts that an even, hyperbolic, 2-elementary lattice admitting a primitive embedding into the K3 lattice $\Lambda_{K3}$ is uniquely determined by $(\rho,\ell,\delta)$.

\par Let $\mathcal{X}$ be a K3 surface  and denote by $S = \mathrm{NS}(\mathcal{X})$ its N\'eron-Severi lattice or rank $\rho_S$. The transcendental lattice $T_{\mathcal{X}}$ is the orthogonal complement of $S$ within $H^2(\mathcal{X},\mathbb{Z})$. For a Jacobian elliptic surface $\mathcal{X}$, we denote the elliptic fibration by $\pi \colon \mathcal{X} \to \mathbb{P}^1$, the zero section by $\sigma$, the smooth fiber class by $f$, and the Mordell--Weil group by $\operatorname{MW}(\mathcal{X},\pi)$.  Kodaira classified all possible singular fibers in a Weierstrass model: these consist of two infinite families $(I_n, I_n^*,\, n \ge 0)$ and six exceptional types $(II, III, IV, II^*, III^*, IV^*)$; see \cite{MR1078016}. After resolution, these fibers correspond to root lattices as follows:
\[
I_n \leftrightarrow A_{n-1}, \quad
I_m^* \leftrightarrow D_{m+4}, \quad
IV^* \leftrightarrow E_6, \quad
III^* \leftrightarrow E_7, \quad
II^* \leftrightarrow E_8,
\]
while the types $III$ and $IV$ give rise to $A_1$ and $A_2$, respectively.  A Jacobian elliptic fibration determines a primitive lattice embedding $j \colon H \hookrightarrow S $ where $H$ is  spanned inside $S$ by the classes of $f$ and $\sigma$. This embedding determines, in turn, an orthogonal decomposition 
\[
S \simeq H \oplus K(-1),
\]
where $K$ is an even positive-definite lattice of rank $\rho_S-2$.  The lattice $K$ is usually referred to as the \emph{frame} of the fibration $\pi$ and carries important geometric information about $\pi$. The \emph{root sub-lattice} $K^{\mathrm{root}} \subset K$ decomposes canonically as a sum of ADE-type lattices, and hence it encodes information about the singular fibers of the fibration $\pi$. Second, a theorem of Shioda proved that the factor group $\mathcal{W} = K / K^{\mathrm{root}}$ is canonically isomorphic to the Mordell--Weil group $\mathcal{W} \simeq \operatorname{MW}(\mathcal{X},\pi)$, providing information about the sections of $\pi$.  
\par The arguments above may also be reversed, see \cite{MR2369941}: a primitive lattice embedding $ H \hookrightarrow S $ is induced by a Jacobian elliptic fibration if and only if the image of $H$ in $S$ contains a quasi-ample class. Furthermore, one can prove that there is a one-to-one correspondence between Jacobian elliptic fibrations on $X$, up to action of the group of automorphisms $\operatorname{Aut}(X)$, and the primitive lattice embeddings $H \hookrightarrow S$, up to the action of the isometry group $\mathrm{O}(S)$.   
\par A related question in the above context is the following: given a fixed choice of frame, how many non-isomorphic Jacobian elliptic fibrations are there on $\mathcal{X}$ with orthogonal complement $K(-1) = j(H)^\perp$. Festi and Veniani \cite{FestiVeniani20} have proved that the answer is always a finite number $\operatorname{mult}(K)$, referred to as the \emph{multiplicity} of the frame. The number can be computed lattice-theoretically, by counting the elements of a particular coset space of isometries. To describe this space, note that one has the following group homomorphisms:
\begin{equation}
\label{eqn:mor}
\mathrm{O}_h(T_{\mathcal{X}}) \hookrightarrow \mathrm{O}(T_{\mathcal{X}})  \rightarrow  \mathrm{O}\left( Q_{T_{\mathcal{X}}} \right), \qquad
\mathrm{O}(S) \rightarrow \mathrm{O}\left( Q_S \right).
\end{equation} 
Here, the groups on the right are the  automorphism groups preserving the associated discriminant quadratic forms, and $\mathrm{O}_h(T_{\mathcal{X}})$ is the subgroup of isometries in $\mathrm{O}(T_{\mathcal{X}})$ that preserve the Hodge decomposition. Moreover, due to the fact that $T_{\mathcal{X}}$ is the orthogonal complement of $S \simeq H \oplus K(-1)$
within an overall unimodular lattice, one has (non-canonical) isomorphisms:
\[
D(T_\mathcal{X}) \ \simeq \ D(S) \ \simeq \ D(K) ,
\]
with the discriminant quadratic forms on $D(T_\mathcal{X})$ and $D(K)$ only differing by a minus sign. Denote by $G_1$ and $G_2$ the abelian groups representing the images of the morphisms~$\eqref{eqn:mor}$. Up to an overall conjugation, one may then regard $G_1$ and $G_2$ as subgroups within the isometry group $\mathrm{O}(Q_{K})$, i.e., the orthogonal group of the discriminant form
on $D(K)$. The double coset space:
\begin{equation}
G_1 \backslash \mathrm{O}\left(Q_{K}\right)  / G_2 
\end{equation}   
is then well-defined, and Festi and Veniani \cite{FestiVeniani20}*{Thm~2.8} proved
\[
\operatorname{mult}(K) \ = \ \Big\vert \, G_1 \backslash \mathrm{O}\left(Q_{K}\right)  / G_2 \,\Big\vert .
\] 
 In particular, the index can be computed using the positive definite lattice $K$.  In geometric terms, this quantity determines the number of Jacobian elliptic fibrations with given singular fibers and Mordell-Weil group, up to automorphisms.  
\par By \cite{MR1892313}, one has $\mathrm{O}_h(T_{\mathcal{X}}) = \{\pm \mathrm{id}\}$ whenever the Picard number \(\rho_S\) is odd, and also for a very general K3 surface when \(\rho_S\) is even. In this situation, the image \(G_1\) of \(\mathrm{O}_h(T_{\mathcal X})\) in \(\mathrm{O}(Q_K)\) is trivial or consists only of the classes induced by \(\pm\mathrm{id}\), and the double-coset count simplifies considerably. We identify
\[
G_2 \simeq \operatorname{Im}\!\left(\mathrm{O}(K)\rightarrow \mathrm{O}(Q_K)\right)
\]
with the image of the natural homomorphism induced by the action of lattice isometries on the discriminant form. Then the multiplicity is determined entirely by the failure of isometries of the discriminant form to lift to isometries of \(K\). In particular, when the action of \(G_1\) is trivial, one obtains $\operatorname{mult}(K) =[\mathrm{O}(Q_K):G_2]$. Since \(\mathrm{O}(Q_K)\) is finite, Lagrange's theorem yields
\begin{equation}
\label{eqn:mult}
\operatorname{mult}(K)
=
\bigl[\mathrm{O}(Q_K):\operatorname{Im}(\mathrm{O}(K))\bigr]
=
\frac{|\mathrm{O}(Q_K)|}
     {|\operatorname{Im}(\mathrm{O}(K))|}.
\end{equation}
Equivalently, the multiplicity is the cardinality of the quotient of \(\mathrm{O}(Q_K)\) by the subgroup of automorphisms of the discriminant form induced by lattice isometries of \(K\).  Geometrically, a multiplicity greater than one reflects the existence of automorphisms of the discriminant form that do not arise from isometries of \(K\), and hence correspond to distinct Jacobian elliptic fibrations having the same frame.
\subsection{The lattices of the Vinberg family}
We consider the families of lattice-polarized K3 surfaces, whose very general member has the transcendental lattice $T_{\mathcal{X}}$ given by
\[
T_n \;\simeq\; H \oplus H \oplus D_n(-1), \qquad  4 \leq n \leq 12
\]
and $H \oplus H \oplus A_3(-1)$ and $H \oplus H \oplus  A_1(-1)^{\oplus 2}$ for $n=3, 2$. The orthogonal complement of $T_n$ in the K3 lattice $\Lambda_{K3}$ is isometric to $S_n =  H \oplus D_{16-n}(-1)$ for $2 \leq n \leq 12$ and has rank $\rho_{S_n}=18-n$ with $6 \le \rho_{S_n} \le 16$.  The lattice $S_n$ is of finite automorphism type if $\rho_{S_n}  \neq 11, 15$ and is 2-elementary if $\rho_{S_n}$ is even; see \cite{MR3165023}. As there is a primitive embedding $H \hookrightarrow S_n$, a K3 surface $\mathcal{X}$ with N\'eron--Severi lattice $ \mathrm{NS}(\mathcal{X}) \cong S_n$ always admits a Jacobian elliptic fibration with a single reducible fiber of type $D_{16-n}$. 
\par One checks the following:
\begin{lemma}
\label{lem:multiplicity0}
For a very general $S_n$-polarized K3 surface $\mathcal{X}$, the Jacobian elliptic fibration with a single reducible fiber of type $K_n=D_{16-n}$ is unique, except in the case $\rho_{S_n}=14$ $(n=6)$ when the multiplicity is $\operatorname{mult}(K_n)=3$.
\end{lemma}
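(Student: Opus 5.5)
The plan is to apply the Festi--Veniani formula \eqref{eqn:mult} to the frame $K_n = D_m$, with $m = 16-n$ and $4 \le m \le 14$. First I justify that the double coset space collapses to $\mathrm{O}(Q_K)/G_2$. For a very general member, $\mathrm{O}_h(T_{\mathcal X}) = \{\pm \mathrm{id}\}$ by \cite{MR1892313}. So $G_1$ is generated by the image of $-\mathrm{id}$, which acts as $-1$ on $D(K)$. Now $-1 \in \mathrm{O}(Q_K)$ is also the image of $-\mathrm{id}_K \in \mathrm{O}(K)$, so $G_1 \subseteq G_2$. Since $\mathrm{O}(Q_K)$ has order at most $6$ here, it is harmless to take $G_1$ inside $G_2$, and the double coset count equals $[\mathrm{O}(Q_K):G_2]$. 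The problem thus reduces to comparing $\mathrm{O}(Q_{D_m})$ with the image of $\mathrm{O}(D_m)$.

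Next I recall the discriminant form of $D_m$. Let $v$ be the vector class and $s, c$ the two spinor classes. Then $q(v) = 1 \bmod 2\mathbb{Z}$ and $q(s) = q(c) = m/4 \bmod 2\mathbb{Z}$. I would split into three cases.

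\emph{Case $m$ odd.} Here $D(D_m) \cong \mathbb{Z}/4$, generated by $s$ with $q(s) = m/4$. The only automorphisms of $\mathbb{Z}/4$ are $\pm 1$. Both preserve $q$, and $-1$ lifts to $-\mathrm{id}$. Hence the multiplicity is $1$.

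\emph{Case $m \equiv 2 \pmod 4$.} Again $D(D_m) \cong \mathbb{Z}/4$, now generated by $s$ with $2s = v$. The same argument gives multiplicity $1$.

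\emph{Case $m$ divisible by $4$.} Here $D(D_m) \cong (\mathbb{Z}/2)^2 = \{0, v, s, c\}$. Any isometry of $Q$ permutes $\{v,s,c\}$ while preserving $q$. In $\mathrm{O}(D_m)$, the Dynkin diagram automorphism swaps the two spinor nodes (for $m=4$, triality permutes all three nodes).
\begin{itemize}
\item If $m \equiv 0 \pmod 8$ (i.e.\ $m = 8$), then $q(s) = q(c) = 0 \neq 1 = q(v)$. Hence $\mathrm{O}(Q) = \langle s \leftrightarrow c \rangle$, which is realized by the diagram automorphism, and the multiplicity is $1$.
\item If $m \equiv 4 \pmod 8$, all three nonzero classes have $q = 1$, so $\mathrm{O}(Q) \cong S_3$ has order $6$.
\begin{itemize}
\item For $m = 4$, triality (i.e.\ $\mathrm{Aut}$ of the $D_4$ diagram, which lies in $\mathrm{O}(D_4)$) surjects onto $S_3$, so the multiplicity is $1$.
\item For $m = 12$, $\mathrm{O}(D_{12}) = W(D_{12}) \rtimes \mathbb{Z}/2$, where the Weyl group acts trivially on $D(K)$. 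The image $G_2$ therefore has order $2$, giving $\operatorname{mult} = 6/2 = 3$.
\end{itemize}
\end{itemize}

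The value $m = 12$ corresponds to $\rho_{S_n} = 18 - n = 14$, the exceptional case in the statement. (The parenthetical label there should read $n = 4$ under the paper's convention $S_n = H \oplus D_{16-n}(-1)$.) The cases $n = 2, 3$ need no separate treatment: only $K = D_{14}$ or $D_{13}$ enters, and those fall under the even and odd cases above.

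The main obstacle is the claim that $\mathrm{O}(D_m)$ for $m \neq 4$ induces only the $s \leftrightarrow c$ swap on the discriminant. I would argue this as follows. Every isometry preserves the root system, so $\mathrm{O}(D_m) = W(D_m) \rtimes \mathrm{Aut}(\text{diagram})$. The Weyl group acts trivially on $K^\vee/K$, because reflections in roots $r$ satisfy $x \mapsto x - (x\cdot r) r$ with $x \cdot r \in \mathbb{Z}$ for $x \in K^\vee$. Finally, $\mathrm{Aut}$ of the diagram is $\mathbb{Z}/2$ for $m \ge 5$.
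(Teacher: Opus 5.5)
Your route is the paper's own: its proof only asserts that the lemma follows from computing $[\mathrm{O}(Q_K):\operatorname{Im}\mathrm{O}(K)]$, and you carry that computation out for $K=D_m$, $4\le m\le 14$. Your final values are right: multiplicity $3$ for $m=12$, i.e.\ $\rho_{S_n}=14$, and $1$ otherwise. You are also right that the parenthetical ``$n=6$'' in the statement should be $n=4$.

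Two steps are wrong as written, though. The first is the case $m\equiv 2 \pmod 4$, where $D(D_m)$ is not cyclic. For every even $m$ one has $2s=(1,\dots,1)\in D_m$, so $D(D_m)\cong(\mathbb{Z}/2)^2=\{0,v,s,c\}$; the group is $\mathbb{Z}/4$ only for odd $m$. Your ``same argument'' therefore fails, because $-1$ acts trivially on $D(K)$ and $-\mathrm{id}_K$ cannot realize the nontrivial isometry. The correct computation is $q(v)=1$ and $q(s)=q(c)=m/4\equiv \pm\tfrac12 \pmod 2$. Hence $\mathrm{O}(Q_K)=\langle s\leftrightarrow c\rangle\cong\mathbb{Z}/2$, and this swap is lifted by the diagram automorphism (e.g.\ $x_m\mapsto -x_m$), so the multiplicity is still $1$. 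The right case split is $m$ odd versus $m$ even. For even $m$, $\mathrm{O}(Q_K)\cong S_3$ exactly when $m\equiv 4\pmod 8$, and $\mathrm{O}(Q_K)\cong\mathbb{Z}/2$ otherwise.

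The second issue is the reduction of the double coset count to $[\mathrm{O}(Q_K):G_2]$. Knowing $G_1\subseteq G_2$ and $|\mathrm{O}(Q_K)|\le 6$ does not justify it. Containment alone is insufficient precisely in the decisive case $m=12$: there $\mathrm{O}(Q_K)\cong S_3$, $G_2$ is a non-normal subgroup of order $2$, and $|G_2\backslash S_3/G_2|=2$, not $3$. What you need is that $G_1=\{\pm1\}$ acts trivially on $\mathrm{O}(Q_K)/G_2$. This holds because $-1$ is central in $\mathrm{O}(Q_K)$ and lies in $G_2$, so $(-1)gG_2=g(-1)G_2=gG_2$. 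For even $m$ it is simpler still: $-1$ is the identity on the $2$-elementary group $D(K)$, so $G_1$ is trivial.

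With these two repairs, the rest of your argument is correct and gives a complete proof. That includes the odd case, $m=8$, triality for $m=4$, the Weyl group acting trivially on $K^\vee/K$, and the index $6/2=3$ for $D_{12}$.
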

\begin{proof}
The proof follows by a computation of the multiplicities, given by Equation~\eqref{eqn:mult}.
\end{proof}

Roulleau determined for very general K3 surfaces of finite automorphism type the number of all $(-2)$-curves and their dual graphs; see \cite{MR4526267}.  We make the following observation:
\begin{remark}
 For $\rho_{S_n}=6, \dots, 10$ (or $n=12, \dots,8$) the dual graph of rational curves for a very general $S_n$-polarized K3 surface consists exactly of the classes that form 
 an extended Dynkin diagram $\widetilde{D}_{16-n}$ and one additional $(-2)$-class representing the zero section.
\end{remark}
For $\rho_{S_n}=12, 13, 14, 16$ (or $n=6,5,4,2$) there are additional $(-2)$-curves in the dual graph. One example is the dual graph in Figure~\ref{fig:pic14vin} where an additional node is present. 

\par We will restrict our attention to $S_n$-polarized K3 surfaces of Picard number $\rho_{S_n} \geq 10$.  This assumption is motivated by general constraints arising from the theory  of arithmetic quotients of type IV domains. Indeed, by a result of  Shvartsman and Vinberg \cite{MR3647781}, a necessary condition for  the freeness of the algebra of automorphic forms of a non-cocompact  arithmetic group $\Delta$ acting on a symmetric domain of type IV of dimension $d$ is that $d \leq 10$.  Since the dimension of the period domain is $20 - \rho_{S_n}$,  this condition translates into $\rho_{S_n} \geq 10$. 

\par We call the family of lattice polarized K3 surfaces with polarizing lattice $S_n$ of rank $10 \le \rho_{S_n} \le 16$ the \emph{Vinberg family of rank $\rho_{S_n}$}.  The lattice-theoretic data of the Vinberg family is summarized in Table~\ref{tab:K3JEF_b}: for each Picard number $\rho_{S_n}$, we list the transcendental lattice $T_n$ of a very general K3 surface polarized by the lattice $S_n$, the determinant and parity of its discriminant form (if the lattice is 2-elementary), the possible decomposition(s) of the N\'eron--Severi lattice $\mathrm{NS}(\mathcal{X})$ of the form $H \oplus K(-1)$, the data $(K^{\mathrm{root}}, \mathcal{W})$ for the corresponding Jacobian elliptic fibration, with the root sublattice $K^{\mathrm{root}}$ and the Mordell--Weil group $\mathcal{W} \simeq \operatorname{MW}(\mathcal{X},\pi)$, and the multiplicity of the frame. As mentioned above, the presence of additional rational curves in the dual graph for $\rho_{S_n} \ge 11$ allows for other Jacobian elliptic fibrations  to be supported on the K3 surfaces, with the polarizing lattice admitting several isometric decompositions of the form $H \oplus K(-1)$. Whenever the polarizing lattice is of finite automorphism type in the sense of Nikulin \cite{MR3165023}, the results of \cite{MR4704757} can be used to show that Table~\ref{tab:K3JEF_b} lists \emph{all} Jacobian elliptic fibrations supported on the corresponding very general K3 surfaces. The case of $\rho_{S_n}=16$ coincides with a case of another sequence of lattice polarized K3 surfaces that was already investigated by the authors in \cite{MR4987329}. For $\rho_{S_n}= 11, 15$ the lattices are not of finite automorphism type, which we indicated by $*$. This follows immediately from the fact that the K3 surfaces also admit the indicated Jacobian elliptic fibration with Mordell--Weil group of positive rank, and we list the height-pairing of the infinite-order section generator. 

We have the following:
\begin{proposition}
\label{lem:multiplicity}
For a very general $S_n$-polarized K3 surface $\mathcal{X}$ with $4 \le n \le 8$, the frames of the Jacobian elliptic fibrations supported on $\mathcal{X}$ in Table~\ref{tab:K3JEF_b} have multiplicity one, with the exception of $\rho_{S_n}=14$ where the frame with $(K^{\mathrm{root}}, \mathcal{W})=(D_{12}, \{ \mathbb{I} \})$ has multiplicity 3.
\end{proposition}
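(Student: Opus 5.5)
We compute, frame by frame, the index in Equation~\eqref{eqn:mult}. This formula is available because $\mathrm{O}_h(T_{\mathcal X})=\{\pm\mathrm{id}\}$ for a very general member. Moreover $-\mathrm{id}$ acts trivially on a $2$-elementary discriminant group, and in the odd rank cases $-\mathrm{id}$ always lifts to $\mathrm{O}(K)$. Hence $G_1$ is contained in $G_2$ and $\operatorname{mult}(K)=[\mathrm{O}(Q_K):\operatorname{Im}\mathrm{O}(K)]$.

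For each $\rho_{S_n}\in\{10,\dots,14\}$ we take the frames listed in Table~\ref{tab:K3JEF_b}: $D_{16-n}$, and then $E_8\oplus D_{8-n}$, $E_7\oplus D_{9-n}$ and the like as they occur, together with any frames that have torsion or rank-one Mordell--Weil group. For each frame $K$ we determine $D(K)$ and $Q_K$ from the ADE summands. When $\mathcal W$ is nontrivial we pass to the overlattice $K\supset K^{\mathrm{root}}$, whose discriminant group is $H^\perp/H$ for the isotropic subgroup $H\subset D(K^{\mathrm{root}})$ given by the torsion sections. In the positive-rank case we use the lattice spanned by $K^{\mathrm{root}}$ and the section of the listed height.

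In every case $D(K)\cong D(T_n)$ with $Q_K=-Q_{T_n}$. For $4\le n\le 8$ this group is small: $(\mathbb Z/2)^2$ when $n$ is even, $\mathbb Z/4$ when $n$ is odd, and $(\mathbb Z/2)^2$ or $\mathbb Z/4$ for the $A_3$ case. So $\mathrm{O}(Q_K)$ is tiny. It is $\mathbb Z/2$ when $D(K)\cong\mathbb Z/4$. When $D(K)\cong(\mathbb Z/2)^2$, it is $S_3$ if the three nonzero elements have equal norm ($n\equiv 0 \bmod 8$) and $\mathbb Z/2$ otherwise ($n\equiv 4\bmod 8$ or $n$ even not divisible by $4$, with two elements of equal norm).

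It then suffices to show that $\mathrm{O}(K)\to\mathrm{O}(Q_K)$ is surjective for each frame. For $D_m$ with $m\ne 4$, the diagram automorphism swapping the two spinor classes realizes the nontrivial isometry, and for $m$ odd it realizes $-1$ on $\mathbb Z/4$. For a sum $E_8\oplus D_m$ or $E_7\oplus D_m$ the same applies via the $D_m$ factor, together with $E_7$ contributions. For frames with torsion we must check that a suitable root-lattice automorphism preserves $H$. For the positive-rank frames we use $-1$ on the Mordell--Weil part combined with diagram automorphisms. The generic element of $\mathrm O(Q_K)$ is handled this way.

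The exception is $\rho_{S_n}=14$ ($n=6$) with $K=D_{12}$. Here $D(D_{12})=(\mathbb Z/2)^2$, where the two spinor classes have norm $12/4=3\equiv 1 \bmod 2$ and the vector class also has norm $1$. So $q\equiv 1$ on all three nonzero elements and $\mathrm{O}(Q_K)\cong S_3$, of order $6$. On the other hand $\mathrm{O}(D_{12})=W(D_{12})\rtimes\mathbb Z/2$ only induces the swap of the spinor classes while fixing the vector class, so its image has order $2$. This gives multiplicity $6/2=3$, in agreement with Lemma~\ref{lem:multiplicity0}. For the other $\rho=14$ frame, $E_8\oplus D_4$, triality on $D_4$ surjects onto $S_3$, giving multiplicity $1$.

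The main obstacle is the non-root frames, namely torsion and positive Mordell--Weil rank (for example $\rho=11,13$). There we must identify $Q_K$ precisely and exhibit lifts explicitly. Our first approach is to use the isotropic-subgroup description and check that a diagram automorphism of $K^{\mathrm{root}}$ stabilizing $H$ induces the required generator. Failing that, we fall back on Nikulin's criterion: $\mathrm{O}(K)\to\mathrm{O}(Q_K)$ is surjective when $K$ is unique in its genus and contains a suitable hyperbolic-free splitting. Alternatively, we verify the order of the image by direct enumeration.
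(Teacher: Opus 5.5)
Your strategy matches the paper's: compute $[\mathrm{O}(Q_K):\operatorname{Im}\mathrm{O}(K)]$ frame by frame via \eqref{eqn:mult}, after noting that $\mathrm{O}_h(T_{\mathcal X})=\{\pm\mathrm{id}\}$ maps into the image of $\pm\mathrm{id}_K$. However, your general description of $\mathrm{O}(Q_K)$ has the congruences reversed, and it contradicts your own $D_{12}$ computation. For $m$ even, the vector class of $D(D_m)$ has norm $1$ and the two spinor classes have norm $m/4$ modulo $2$. So all three nonzero classes have equal norm, and $\mathrm{O}(Q)\cong S_3$, exactly when $m\equiv 4\pmod 8$. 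For $m\equiv 0\pmod 8$ the spinor classes are isotropic and $\mathrm{O}(Q)\cong\mathbb{Z}/2$. Applied literally, your rule gives $\mathrm{O}(Q_{D_8})\cong S_3$, hence multiplicity $3$ at $\rho_{S_n}=10$, since $\mathrm{O}(D_8)$ only induces the spinor swap. It also gives $\mathrm{O}(Q_{D_{12}})\cong\mathbb{Z}/2$, hence multiplicity $1$ at $\rho_{S_n}=14$. Both contradict the statement. Also, $\rho_{S_n}=18-n=14$ means $n=4$, not $n=6$; you copied a slip from Lemma~\ref{lem:multiplicity0}. With the correct label, $n=4\equiv 4\pmod 8$ is exactly the $S_3$ case.

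Your ``main obstacle'' paragraph is aimed at cases that do not occur. For $10\le\rho_{S_n}\le 14$ the table has no torsion frames and no $E_7$ frames. Both $\rho_{S_n}=13$ frames, $D_{11}$ and $E_8\oplus A_3$, have trivial Mordell--Weil group. The only non-root frame is $(E_8,\langle 4\rangle)$ at $\rho_{S_n}=11$. There the unimodular $E_8$ splits off, so $K\cong E_8\oplus\langle 4\rangle$, $D(K)\cong\mathbb{Z}/4$, $\mathrm{O}(Q_K)=\{\pm1\}$, and $-\mathrm{id}_K$ lifts; multiplicity one is immediate. Your fallback through Nikulin's criterion is unnecessary, and it would not apply anyway: Nikulin's surjectivity theorem concerns indefinite lattices, while $K$ is positive definite.

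With the congruence corrected and this case handled directly, your remaining checks complete the proof exactly as in the paper:
\begin{itemize}
\item $D_8$ and $D_{10}$ via the spinor swap;
\item $D_9$, $D_{11}$ and $E_8\oplus A_3$ via $-\mathrm{id}$;
\item $E_8\oplus 2A_1$ via swapping the two $A_1$ summands;
\item $E_8\oplus D_4$ via triality;
\item $D_{12}$, whose image has index $3$ in $S_3$.
\end{itemize}
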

\begin{proof}
The proof uses Lemma~\ref{lem:multiplicity0} and follows by computation of the multiplicities~\eqref{eqn:mult}.
\end{proof}

\begin{remark}
The first two of the Jacobian elliptic fibrations listed in Table~\ref{tab:K3JEF_b} for each entry or rank $\rho_{S_n} \ge 11$ will play a crucial role in the construction of the orthogonal modular forms associated with their period domains. They will be constructed in Propositions~\ref{prop:WEQ_rank10} and~\ref{prop:geometry}.
\end{remark}

\begin{table}[!ht]
\scalebox{0.9}{
\begin{tabular}{|c|cclc|cc|c|}
\hline
$\rho_{S_n} \, (n)$ & $T_n$ &  \multicolumn{1}{c}{$S_n= H \oplus K(-1)$}  & $|\det{S_n}|$ & $\delta$ & $K^{\text{root}}$ & $\mathcal{W}$ & $\operatorname{mult}(K)$\\
\hline
\hline
$10$ $(8)$	& $H \oplus H \oplus D_8(-1)$ & $H\oplus D_8(-1)$ 					& $2^2$ 		& $0$ 	& $D_8$ 			& $\{ \mathbb{I} \}$						& 1\\
\hline
$11^*(7)$ 	& $H \oplus H \oplus D_7(-1)$ & $H\oplus D_9(-1)$ 				& $2^2$ 	& -- 	& $D_9$ 		& $\{ \mathbb{I} \}$					& 1\\
& & $\quad \cong H \oplus E_8(-1) \oplus A_1(-2)$ 					& 		& 		& $E_8$ 		& $\langle 4 \rangle$					& 1\\
\hline
$12$ $(6)$ 	& $H \oplus H \oplus D_6(-1)$ & 
$H \oplus D_{10}(-1)$ 					& $2^2$	& $1$		& $D_{10}$ 		& $\{ \mathbb{I} \}$					& 1\\
& & $\quad \cong H\oplus E_8(-1) \oplus A_1(-1)^{\oplus 2}$ 					& 		& 		& $E_8 + 2 A_1$ 	& $\{ \mathbb{I} \}$					& 1\\
\hline
$13$ $(5)$ 	& $H \oplus H \oplus D_5(-1)$ & 
$H \oplus D_{11}(-1)$ 					& $2^2$	& --		& $D_{11}$ 		& $\{ \mathbb{I} \}$					& 1\\
& & $\quad \cong H\oplus E_8(-1) \oplus A_3(-1)$ 					& 		& 		& $E_8 + A_3$ 	& $\{ \mathbb{I} \}$					& 1\\
\hline
$14$ $(4)$ 	& $H \oplus H \oplus D_4(-1)$ & 
$H \oplus D_{12}(-1)$ 					& $2^2$	& $0$		& $D_{12}$ 		& $\{ \mathbb{I} \}$					& 3\\
& & $\quad \cong H\oplus E_8(-1) \oplus D_4(-1)$ 					& 		& 		& $E_8 + D_4$ 	& $\{ \mathbb{I} \}$					& 1\\
\hline
$15^*(5)$ 	& $H \oplus H \oplus A_3(-1)$ & $H\oplus D_{13}(-1)$ 				& $2^2$ 	& -- 	& $D_{12}$ 	& $\langle 1 \rangle$					& 1\\
&& $\quad \cong H\oplus E_8(-1) \oplus D_5(-1)$ 					& 		& 		& $E_8 + D_5$ 		& $\{ \mathbb{I} \}$					& 1\\ 
& &  & 		& 		& $D_{13}$ 		& $\{ \mathbb{I} \}$				& 1\\
\hline
$16$ $(4)$ & $H \oplus H \oplus 2A_1(-1)$ & $H \oplus D_{14}(-1)$ 			& $2^2$ 	& $1$ 	& $D_{12} + 2 A_1$ 	&$\mathbb{Z}/2\mathbb{Z}$ 			& 1\\
&& $\cong H \oplus E_8(-1) \oplus D_6(-1)$ 				&		&		& $E_8 + D_6$ 		& $\{ \mathbb{I} \}$ 					& 1\\
&& $\cong H \oplus E_7(-1) \oplus E_7(-1)$ 				&		&		& $2 E_7$ 		& $\{ \mathbb{I} \}$ 					& 1\\
&& 						&		&		& $D_{14}$ 		& $\{ \mathbb{I} \}$ 					& 1\\
\hline
\end{tabular}}
\captionsetup{justification=centering}
\caption{Jacobian elliptic K3 surfaces in the Vinberg family}
\label{tab:K3JEF_b}
\end{table}
\subsection{Explicit models for the Vinberg family}
We now derive a Weierstrass form for the K3 surfaces in the Vinberg family. We have the following:
\begin{proposition}
\label{prop:WEQ_rank10}
Consider a Jacobian elliptic K3 surface with Weierstrass equation
\begin{equation} 
\label{eqn:WEQ}
\begin{split}
 y^2  = x^3 + & \Big( u^3 + F_4 u + F_6\Big) x^2 + \Big( G_8 u^2 + G_{10} u + G_{12} \Big) x \\
 + & \ H_8 u^5 + H_{10} u^4 + H_{12} u^3 + H_{14} u^2 + H_{16} u + H_{18}.
\end{split}
\end{equation}
The minimal resolution of Equation~(\ref{eqn:WEQ}) defines a K3 surface endowed with a canonical $H\oplus D_8(-1)$-polarization. Conversely, every $H\oplus D_8(-1)$-polarized  K3 surface has a birational model given by Equation~(\ref{eqn:WEQ}).
\end{proposition}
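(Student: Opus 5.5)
The plan is to read off the singular fibers of \eqref{eqn:WEQ} directly via Tate's algorithm. The forward direction then comes from the trivial lattice. The converse comes from the lattice-theoretic description of Jacobian fibrations recalled above, followed by an explicit normalization of the Weierstrass coefficients.

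\emph{Forward direction.} Write \eqref{eqn:WEQ} as $y^2=x^3+a_2(u)x^2+a_4(u)x+a_6(u)$ with $\deg a_2=3$, $\deg a_4\le 2$, $\deg a_6\le 5$. Viewing $a_2,a_4,a_6$ as sections of $\mathcal{O}(4),\mathcal{O}(8),\mathcal{O}(12)$ on $\mathbb{P}^1$ shows that the minimal resolution is a K3 surface, provided the model is minimal, i.e.\ no point has $\operatorname{ord}(a_i)\ge i$ for all $i$. Minimality holds for general coefficients. At $u=\infty$, with local parameter $s=1/u$, the vanishing orders are
\[
\operatorname{ord}_s(a_2)=1,\qquad \operatorname{ord}_s(a_4)\ge 6,\qquad \operatorname{ord}_s(a_6)\ge 7 .
\]
These fit the standard normal form $x^3+s\,a\,x^2+s^{k+2}b\,x+s^{2k+3}c$ of an $I_{2k}^*$ fiber with $k=2$. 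As a consistency check, the discriminant is dominated by the term $4a_2^3a_6$ of order $3+7=10$, while $a_2^2a_4^2$ has order $\ge 14$, so $\operatorname{ord}_\infty\Delta=10=6+4$. Hence the fiber is of type $I_4^*$, and for generic coefficients the remaining $24-10=14$ singular fibers are of type $I_1$.

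The zero section, the fiber class and the non-identity components of the $I_4^*$ fiber then span a copy of $H\oplus D_8(-1)$ in $\mathrm{NS}(\mathcal{X})$; this is the canonical polarization. I must still check that this embedding is primitive. The saturation of the trivial lattice is obtained by adding torsion sections. So primitivity amounts to the absence of torsion sections whose contribution lies in the span of $H\oplus D_8(-1)$. Generically $\mathcal{W}$ is trivial, as in Table~\ref{tab:K3JEF_b}. I would also argue, or at least note, that the (up to isometry unique) primitive embedding $H\oplus D_8(-1)\hookrightarrow\Lambda_{K3}$ is respected under specialization. The big and nef class is taken in the usual way from $\sigma+mf$.

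\emph{Converse.} Let $\mathcal{X}$ be $H\oplus D_8(-1)$-polarized. By the correspondence between embeddings $H\hookrightarrow S$ with a quasi-ample class and Jacobian fibrations, composed with $S\hookrightarrow \mathrm{NS}(\mathcal{X})$, $\mathcal{X}$ carries a Jacobian elliptic fibration whose frame contains $D_8$. The $D_8$ roots are supported in a single reducible fiber, which I place at $u=\infty$; Lemma~\ref{lem:multiplicity0} shows this choice of fibration is essentially unique. In the generic case this fiber is $I_4^*$ (degenerations to $I_n^*$ with $n>4$, or to $II^*$ and similar, are handled by a closure argument since the equations are closed conditions). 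Tate's algorithm for $I_4^*$ at $s=0$ gives, after a translation in $x$, a Weierstrass form with $\operatorname{ord}_s(a_2)=1$, $\operatorname{ord}_s(a_4)\ge 4$, $\operatorname{ord}_s(a_6)\ge 7$. In the affine coordinate $u$ this means $\deg a_2\le 3$, $\deg a_4\le 4$, $\deg a_6\le 5$.

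The remaining work is coordinate normalization:
\begin{itemize}
\item rescale $(x,y,u)$ so that the leading coefficient of $a_2$ equals $1$, which is possible since it is nonzero because $\operatorname{ord}_s a_2=1$ exactly;
\item translate $u\mapsto u+c$ to kill the $u^2$ term of $a_2$;
\item apply $x\mapsto x+\lambda_1 u+\lambda_0$, which changes $a_4$ by $2\lambda a_2+3\lambda^2$.
\end{itemize}
Since $a_2$ is monic cubic, suitable $\lambda_1,\lambda_0$ kill the $u^4$ and $u^3$ coefficients of $a_4$. This is a triangular linear system once $\lambda_1$ is fixed, given that the $\lambda^2$ term has lower degree. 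The translation changes $a_6$ by $\lambda a_4+\lambda^2a_2+\lambda^3$. Here I must check that the degree bound $\deg a_6\le 5$ survives, i.e.\ that the $I_4^*$ condition is preserved. The $x^2$-coefficient also shifts by $3\lambda$, which alters the $u$ and constant terms of $a_2$ but not the normalized leading terms; these are absorbed into $F_4,F_6$.

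\emph{Main obstacle.} I expect the hardest step to be this last normalization: showing that the translation killing the top terms of $a_4$ is compatible with the $I_4^*$ vanishing conditions, so that the result lands exactly in the shape \eqref{eqn:WEQ}. The primitivity check in degenerate cases is secondary.
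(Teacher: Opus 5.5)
Your argument is essentially the paper's. Like the paper, you start from $\deg a_2\le 3$, $\deg a_4\le 4$, $\deg a_6\le 5$, which encodes an $I_4^*$ fiber at $u=\infty$. You then normalize $\tilde F_0=1$, translate $u$ to kill $\tilde F_2$, and shift $x$ to get $\tilde G_4=\tilde G_6=0$. The paper notes that $\tilde F_0=0$ would give a non-minimal $(4,6,12)$-point at $\infty$. Your forward direction makes explicit what the paper leaves implicit: orders $(1,\ge 6,\ge 7)$ at $\infty$, and $\operatorname{ord}_\infty\Delta=10$ iff $H_8\neq 0$.

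The step you call the main obstacle is immediate. For $\lambda=\lambda_1u+\lambda_0$, each of $\lambda a_4$, $\lambda^2a_2$, $\lambda^3$ has degree $\le 5$. Equivalently, $\lambda$ vanishes to order $3$ at $\infty$ as a section of $\mathcal{O}(4)$, which preserves the orders $(1,\ge4,\ge7)$. The values $\lambda_1=-\tilde G_4/2$ and $\lambda_0=-\tilde G_6/2$ are forced. Primitivity is also automatic whenever the fiber at $\infty$ is $I_4^*$. A torsion section seen only by $H\oplus D_8(-1)$ meets the identity component of every other fiber, so its height is at least $4-2=2>0$.

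One step does fail as written: the $II^*$ case cannot be ``handled by a closure argument''. A $II^*$ fiber at $\infty$ never occurs in this shape. If $\operatorname{ord}_\infty a_2=1$, then $\operatorname{ord}_\infty f=2$. If $\operatorname{ord}_\infty a_2\ge 2$ (i.e.\ $\tilde F_0=0$), then together with $\operatorname{ord}_\infty a_4\ge 4$ and $\operatorname{ord}_\infty a_6\ge 7$ you get $\operatorname{ord} f\ge 4$, $\operatorname{ord} g\ge 6$, a non-minimal point. So the boundary $\tilde F_0\to 0$ is not a $II^*$ surface. For a surface whose $D_8$ sat inside an $E_8$ fiber, your argument would produce no model of the required shape.

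What excludes this case is primitivity, which you should invoke here:
\begin{itemize}
\item $D_8\subset E_8$ has index $2$, so a $D_8$ inside an $E_8$ component of $K^{\mathrm{root}}$ is not primitive in $\mathrm{NS}(\mathcal{X})$, contradicting the definition of an $S$-polarization.
\item The $D_8$ roots form an irreducible system, so they lie in a single component of $K^{\mathrm{root}}$. Rank excludes $E_6$ and $E_7$, type-$A$ components contain no $D_8$, and primitivity excludes $E_8$.
\item Hence the component is $D_m$ with $m\ge 8$, i.e.\ an $I^*_{m-4}$ fiber.
\end{itemize}
Tate's algorithm for $I_n^*$ with $n\ge 4$ gives the orders $(1,\ge 4,\ge 7)$ directly, so no closure argument is needed for $n>4$ either.

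A smaller inaccuracy: the non-minimality test is $\operatorname{ord}(a_i)\ge i$ \emph{after} a local translation in $x$ (equivalently $\operatorname{ord} f\ge 4$, $\operatorname{ord} g\ge 6$), not in the given coordinates. On the paper's component $\mathcal{C}_2$, for instance, $a_2$ in general does not vanish at $u=-\alpha$, yet the model is non-minimal there. Since you only claim minimality for general coefficients, this does not affect your conclusion.
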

\begin{proof}
A K3 surface with N\'eron--Severi lattice $H \oplus D_{16-n}(-1)$ admits an elliptic fibration that generically has a singular fiber at $u=\infty$ of type $I_{12-n}^*$ and $(n+6)$ singular fibers of type $I_1$. A Weierstrass model is then obtained by moving the reducible singular fiber to $u=\infty$. We write the Weierstrass model for $n \le 8$ in the form
\begin{equation}
\begin{split}
 y^2   = x^3 & + \Big( \tilde{F}_0 u^3 + \tilde{F}_2 u^2 + F_4 u + F_6\Big) x^2 + \Big( \tilde{G}_4 u^4 + \tilde{G}_6 u^3 +G_8 u^2 + G_{10} u + G_{12} \Big) x \\
 & +  \ H_8 u^5 + H_{10} u^4 + H_{12} u^3 + H_{14} u^2 + H_{16} u + H_{18}.
\end{split}
\end{equation}
We can then normalize the coefficient of $u^3x^2$ to one, and eliminate the subleading term $u^2x^2$, setting $\tilde{F}_0=1$ and $\tilde{F}_2=0$. The reason is that $\tilde{F}_0=0$ would yield a non-minimal Weierstrass model with a $(4,6,12)$-point at $u=\infty$ (for a definition see \cite{MR1078016}). Overall shifts in $x$ are then uniquely constrained by requiring that the term in $x$ has minimal degree, yielding $\tilde{G}_4=\tilde{G}_6=0$. It follows from Lemma~\ref{lem:multiplicity} that the Weierstrass model is unique, up to rescalings $x \mapsto \lambda^6 x$, $y\mapsto \lambda^9 y$, $u \mapsto \lambda^2 u$ for $\lambda \in \mathbb{C}^\times$. Under this action, the coefficients rescale according to $F_{2k} \mapsto \lambda^{2k} F_{2k}$, $G_{2l} \mapsto \lambda^{2l} G_{2l}$, and $H_{2m} \mapsto \lambda^{2m} H_{2m}$. This means we have coefficients of weights
\begin{equation}
\label{eqn:weights}
 4, 6, 8, 8, 10, 10, 12, 12, 14, 16, 18.
\end{equation}
Moreover, one can then tell precisely when two members of the family in Equation~(\ref{eqn:WEQ}) are isomorphic:  they are isomorphic if and only if their coefficients are related by a rescaling.
\end{proof}
\par We make the following:
\begin{remark}
\label{rem:11-14}
 In Table~\ref{tab:K3JEF_b} restrictions to higher Picard numbers $\rho_{S_n} =11, \dots, 14$ are attained by setting $H_8=0$, $H_8=H_{10}=0$, $\dots$, $H_8= \dots = H_{16}=0$.  
\end{remark}
\begin{remark}
\label{rem:15}
 For $\rho_{S_n}=15$ we have $H_8= \dots = H_{16}=0$. Equation~(\ref{eqn:WEQ}) then has a singular fiber at $u=\infty$ of type $I_{8}^*$. If we write $H_{18}=H_9^2$, Equation~\eqref{eqn:WEQ} admits the infinite-order section $\sigma'\colon (x,y)=(0,\pm H_9)$. The N\'eron--Severi lattice is the extension of $H \oplus D_{12}(-1)$ by the infinite-order section $\sigma'$.  In fact, the N\'eron--Severi lattice is generated as
\[
 \Big\langle \sigma + f, f , \ d_1, d_2, \dots, d_{12}, \ \sigma' - \sigma - 2f \Big \rangle,
\]
where $d_1, \dots, d_{12}$ are the nodes of the $D_{12}$-graph, with $d_1 \circ \sigma'=1$ and $d_i \circ \sigma'=0$ for $i=2, \dots, 12$, and $\sigma' \circ f=1$, $\sigma' \circ \sigma =0$. In the reducible fiber, the section $\sigma'$ intersects a terminal node connected with the fork opposite to one containing the neutral component in the extended Dynkin diagram $\widetilde{D}_{12}$ whence $\langle \sigma', \sigma' \rangle = 1$. The N\'eron--Severi lattice has rank $15$ and discriminant $4$. For the given choice of generators the lattice decomposes as $H \oplus K(-1)$. A computation of the lattice-theoretic genus shows $K \cong D_{13}$.
\end{remark}
\begin{remark}
\label{rem:16}
For $\rho_{S_n}=16$ we have $H_8= \dots = H_{18}=0$. Equation~\eqref{eqn:WEQ} then admits the 2-torsion section $(x,y)=(0,0)$; all supported Jacobian elliptic fibrations in this case were explicitly constructed in \cite{MR4903806}. 
\end{remark}
Next, let us summarize some geometric realizations of the lattice-polarized K3 surfaces listed in Table~\ref{tab:K3JEF_b}, following the constructions of \cite{MR4526267}. We focus on double-plane models and their relation to elliptic fibrations, as well as models given by double covers of Hirzebruch surfaces: Roulleau finds that for the very general $H \oplus D_{16-n}(-1)$-polarized K3 surface with $n \le 9$, there exists a divisor $D_2$ with $D_2^2 = 2$ inducing a double cover
\[
\varphi_{|D_2|} : \mathcal{X} \longrightarrow \mathbb{P}^2,
\]
so that the K3 surface is obtained as the minimal resolution of a double cover of the projective plane branched along a sextic curve.  Second, he shows that there exists a hyperelliptic divisor giving rise to a morphism
\[
\mathcal{X} \longrightarrow \mathbb{F}_n,
\]
typically with $n=4$, whose branch locus is of the form $\sigma + B$ with $B \in |3\sigma + 12f|$.  We now give a closely related characterization of the K3 surfaces of the Vinberg family. Our models are derived explicitly from the Weierstrass equation~\eqref{eqn:WEQ}.
\begin{proposition}
\label{prop:geometry}
\begin{enumerate}[
    label=(\roman*),
    wide,
    labelindent=0pt,
    itemsep=4pt,
    topsep=0pt,
    parsep=0pt,
    partopsep=0pt
]
\item Equation~\eqref{eqn:WEQ} realizes the K3 surface as a double sextic over $\mathbb{P}^2$. For $\rho_{S_n} = 10$, the sextic branch curve decomposes into the union of a line and a quintic, and the double-sextic has singularities $\mathbf{E}_7 + 2 \mathbf{A}_1$.
\item \label{prop:geometry.partII}
For rank $\rho_{S_n} \geq 11$, Equation~\eqref{eqn:WEQ} realizes the K3 surface as a double cover of a quadric surface in $\mathbb{P}^1 \times \mathbb{P}^1$. The two rulings induce the first two of the Jacobian elliptic fibrations listed in Table~\ref{tab:K3JEF_b} for each entry.
\item For rank $\rho_{S_n} \geq 13$, Equation~\eqref{eqn:WEQ} realizes the K3 surface as a quartic hypersurface in $\mathbb{P}^3$. For $\rho_{S_n} = 13$, the quartic hypersurface has a single $\mathbf{A}_{11}$ singularity.
\end{enumerate}
\end{proposition}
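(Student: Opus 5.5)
The plan is to read Equation~\eqref{eqn:WEQ} directly in different projective compactifications. We write it as $y^2 = x^3 + a(u)x^2 + b(u)x + c(u)$. We then check each claimed singularity type by local normal forms, and compare with the rank and fiber data in Table~\ref{tab:K3JEF_b}.

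\emph{Part (i).} I treat $[x:u:z]$ as coordinates on $\mathbb{P}^2$ and homogenize the right-hand side to degree six. The monomials $x^3$, $u^3x^2$, $G_8u^2x$, $H_8u^5$ have affine degrees $3,5,3,5$, and every other monomial has smaller degree. Hence the branch sextic is $z\cdot Q_5$, with
\[
Q_5 = x^3z^2 + (u^3 + F_4uz^2 + F_6z^3)x^2 + (G_8u^2 + G_{10}uz + G_{12}z^2)xz^2 + H_8u^5 + H_{10}u^4z + \dots + H_{18}z^5 .
\]
This is the union of the line $z=0$ and a quintic. On the line we have $Q_5|_{z=0} = u^3(x^2 + H_8u^2)$.
\begin{itemize}
\item The two simple zeros $x = \pm\sqrt{-H_8}\,u$ are transversal intersections of the line with the quintic, so they give $2\mathbf{A}_1$.
\item At $[1:0:0]$ the line meets the quintic with multiplicity $3$. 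In local coordinates $(u,z)$ the branch curve is $z(z^2 + u^3 + \text{h.o.t.})$, using the $x^3z^2$ and $u^3x^2$ terms.
\item I will check, by weighting $u$ and $z$ with weights $2$ and $3$, that the remaining terms have higher weighted degree. Then $z^3 + zu^3$ is the normal form of an $\mathbf{E}_7$ double point.
\item I will also check that the quintic is otherwise smooth for generic coefficients.
\end{itemize}
As a consistency check, $7+2+1 = 10$ matches $\rho_{S_n} = 10$.

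\emph{Part (ii).} Set $H_8 = 0$. Then the right-hand side has degree at most $4$ in $u$ and degree $3$ in $x$. It therefore defines a branch curve of bidegree $(4,4)$ on $\mathbb{P}^1_u \times \mathbb{P}^1_x$, namely the fiber $\{x = \infty\}$ together with a curve of bidegree $(4,3)$. This exhibits the surface as a double cover of a quadric.
\begin{itemize}
\item Projection to $u$ gives back the original fibration with the $D_{16-n}$ fiber, since the branch points in each fiber are the three roots in $x$ and $x=\infty$.
\item Projection to $x$ gives a second genus-one fibration, with branch points the four roots in $u$ of a quartic whose coefficients are polynomials in $x$.
\item To make this second fibration Jacobian, I use the point $u=\infty$. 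There the quartic has leading coefficient $H_{10}$ when $\rho = 11$; when $\rho \ge 12$, the coefficient vanishes and that point becomes a branch point, which is a rational point.
\item I then convert to Weierstrass form in $x$ via the standard quartic-to-cubic transformation. I compute the discriminant and apply Tate's algorithm at $x = \infty$ and at the remaining degenerations. The goal is to find fibers $E_8$, $E_8 + A_1^{\oplus 2}$, $E_8 + A_3$, $E_8 + D_4$, and $E_8 + D_5$ for $\rho = 11, \dots, 15$ as $H_{10}, H_{12}, \dots$ are successively set to zero (Remark~\ref{rem:11-14}).
\item For $\rho = 11$ I additionally exhibit a section of height $4$, for instance as a curve through a point where the branch curve is tangent to a ruling.
\end{itemize}

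\emph{Part (iii).} For $\rho \ge 13$ we have $H_8 = H_{10} = H_{12} = 0$, so the polynomial is cubic in $u$. I look for a birational substitution that lowers the total degree, for instance $y = x\eta$ combined with a linear change involving $u$ and $x$, so that everything becomes homogeneous of degree $4$ in $\mathbb{P}^3$. The key point is that $u^3x^2$ is the only obstruction to degree $4$. I then locate the unique singular point, which comes from the $D_{11}$ fiber and the section, and verify that it is $\mathbf{A}_{11}$ by blowing up or by exhibiting a local normal form $w^2 + v^{12}$. As a consistency check, the $11$ exceptional curves together with the hyperplane class give rank $12 + 1 = 13$.

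I expect the main obstacles to be two: the identification of the fiber types of the second fibration in (ii), which requires careful Tate-algorithm bookkeeping, and finding the right quartic model in (iii).
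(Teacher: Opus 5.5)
\textbf{Parts (i) and (ii).} These follow the paper's route. You use the same homogenization to a double plane branched along $z\cdot Q_5$. You also use the same double cover of $\mathbb{P}^1\times\mathbb{P}^1$, branched along $\{x=\infty\}$ plus a curve of bidegree $(4,3)$.

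Your $\mathbf{E}_7$ check with weights $(2,3)$ is right. The nodes do sit at $x=\pm\sqrt{-H_8}\,u$ on $z=0$; the paper's printed condition $\alpha^2G_8+1=0$ should read $\alpha^2H_8+1=0$.

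For (ii) you give more than the paper, which only says ``one verifies'' and works out $\rho_{S_n}=14$ explicitly in \eqref{eqn:weq_alt}. Your predicted degenerations agree with that: once $H_{10}=0$, the leading coefficient $x^2+H_{12}$ in $u$ gives $2\mathbf{A}_1$, then $\mathbf{A}_3$ when $H_{12}=0$, and $H_{14}=0$, $H_{16}=0$ give $I_0^*$, $I_1^*$ at $x=0$.

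For $\rho_{S_n}=11$, the height-$4$ section is simply the second curve $y/u^2=-\sqrt{H_{10}}$ over the ruling $u=\infty$. The two curves over that ruling are the two far end components of the $\widetilde{D}_9$ fiber, so they are disjoint and the height is $4$.

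Two minor slips here. You omit $\rho_{S_n}=16$. And for $\rho_{S_n}=15,16$ the $u$-ruling gives $(D_{12},\langle 1\rangle)$ and $(D_{12}+2A_1,\mathbb{Z}/2\mathbb{Z})$, not a single $D_{16-n}$ fiber.

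\textbf{Part (iii): the gap.} Here the quartic model is the whole content, and the route you sketch cannot produce it.
\begin{itemize}
\item An invertible affine-linear change of $(u,x)$ preserves total degree, so the degree-$5$ part coming from $u^3x^2$ survives.
\item The substitution $y=x\eta$ does not help, because the right-hand side is not divisible by $x$: its $x$-free part is $H_{14}u^2+H_{16}u+H_{18}$.
\end{itemize}

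The missing idea, which is what the paper does, is a quadratic rather than linear birational change.
\begin{itemize}
\item Multiply the equation by $x$ and use $ux$ and $x$ as coordinates: $\mathbf{x}_1=4ux$, $\mathbf{x}_2=4x$, $\mathbf{x}_3=1$. Then $x\cdot u^3x^2=(ux)^3$ becomes cubic.
\item Every other term is then a polynomial of degree at most $4$ in $ux,x,y$, except $H_{14}u^2x=H_{14}(ux)^2/x$.
\item That term is cancelled by the shift $\mathbf{x}_0=8(y+H_7u)$ with $H_7^2=H_{14}$, since $x(y+H_7u)^2=xy^2+2H_7(ux)y+H_7^2u^2x$.
\end{itemize}
This yields \eqref{eqn:Vinberg_body1}, Vinberg's normal form. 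It also explains why $H_8=H_{10}=H_{12}=0$ is needed: a surviving term $H_{2k}u^m$ with $m\ge 3$ would become $(ux)^m/x^{m-1}$ after multiplying by $x$.

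Once you have this model, your local analysis goes through. In the chart $\mathbf{x}_0=1$, the quadratic part at $(1:0:0:0)$ is $\mathbf{x}_2\mathbf{x}_3$. Eliminating along $\mathbf{x}_2\approx 4\mathbf{x}_1^3$ turns $-\mathbf{x}_2^4$ into $-256\,\mathbf{x}_1^{12}$, which gives $\mathbf{A}_{11}$.

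Finally, your rank check is off by one. The $11$ exceptional curves plus the hyperplane class span only rank $12$, so you need a further curve class on the quartic to account for $\rho_{S_n}=13$.
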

\begin{proof}
\emph{Rank $\rho_{S_n} \ge 10$:}
From Equation~(\ref{eqn:WEQ}), by setting $x=z_1/z_3$, $u=z_2/z_3$, $y=\tilde{z}_4/z_3^3$ we obtain a double sextic surface over $\mathbb{P}^2 = \mathbb{P}(z_1, z_2,  z_3)$, given by
\begin{equation}
 \begin{split}
  \tilde{z}_4^2  = z_3 & \Big( z_1^3 z_3^2 + \Big(  z_2^3 + F_4 z_2 z_3^2 + F_6 z_3^3 \Big) z_1^2 + \Big( G_8 z_2^2 + G_{10} z_2 z_3 + G_{12} z_3^2 \Big) z_1 z_3^2 \\
 & +  \ H_8 z_2^5  + H_{10} z_2^4 z_3 + H_{12} z_2^3 z_3^2 + H_{14} z_2^2 z_3^3 + H_{16} z_2 z_3^4 + H_{18} z_3^5\Big).
\end{split}
\end{equation}
The branch sextic has only normal singularities. In rank 10, the branch curve exhibits one singularity of type $\mathbf{E}_7$ at $[z_1, z_2,  z_3, \tilde{z}_4]=[1: 0 : 0: 0]$ and two singularities of type $\mathbf{A}_1$ at $[z_1, z_2,  z_3, \tilde{z}_4]=[1 : \alpha : 0: 0]$ with $\alpha^2 G_8+1=0$. 

\medskip

\emph{Rank $\rho_{S_n} \ge 11$:}
For the case with $H_8 = 0$, Equation~(\ref{eqn:WEQ}) defines a double cover of $\mathbb{P}(x_0, x_1) \times \mathbb{P}(u_0, u_1)$, given by
\begin{equation} 
\begin{split}
  \tilde{y}^2  =   x_1 \Big( & u_1^4 x_0^3 + u_1 x_1 x_0^2\Big( u_0^3 + F_4 u_0 u_1^2 + F_6 u_1^3\Big)  + u_1^2 x_1^2 x_0  \Big( G_8 u_0^2 + G_{10} u_0 u_1 + G_{12} u_1^2 \Big) \\
 & + x_1^3 \Big( H_{10} u_0^4 + H_{12} u_0^3 u_1 + H_{14} u_0^2 u_1^2 + H_{16} u_0 u_1^3 + H_{18} u_1^4 \Big) \Big).
\end{split}
\end{equation}
The two rulings of $\mathbb{P}^1 \times \mathbb{P}^1$ give rise to distinct genus-one fibrations; in the cases under consideration, one verifies that both induce elliptic fibrations with section.

\medskip

\emph{Rank $\rho_{S_n} \ge 13$:}
Assuming $H_8 = H_{10} = H_{12}=0$ and writing $H_{14}=H_7^2$, one obtains a quartic surface in $\mathbb{P}^3 = \mathbb{P}(\mathbf{x}_0, \mathbf{x}_1, \mathbf{x}_2, \mathbf{x}_3)$ by the homogeneous quartic equation
\begin{equation}
\label{eqn:Vinberg_body1}
  \mathbf{x}_0^2 \mathbf{x}_2 \mathbf{x}_3 - 4 \mathbf{x}_1^3 \mathbf{x}_3 - \mathbf{x}_2^4 - \mathbf{x}_1 \mathbf{x}_3^2 \, g(\mathbf{x}_0, \mathbf{x}_1, \mathbf{x}_3) - \mathbf{x}_2 \mathbf{x}_3 \, f(\mathbf{x}_1, \mathbf{x}_2, \mathbf{x}_3) = 0 \,,
\end{equation}
with
\begin{equation}
\begin{split}
  g &  = 16 H_7 \mathbf{x}_0 + 16 G_8 \mathbf{x}_1 + 64 H_{16} \mathbf{x}_3, \\
  f & = 4 F_4 \mathbf{x}_1 \mathbf{x}_2 + 4 F_6 \mathbf{x}_2^2  + 16 G_{10} \mathbf{x}_1 \mathbf{x}_3  + 16 G_{12} \mathbf{x}_2 \mathbf{x}_3 + 64 H_{18} \mathbf{x}_3^2.
\end{split}
\end{equation}  
Since $A, B, C \neq 0$ in \cite{Vinberg2010}*{Eqn.~\!(13)}, we can rescale the coordinates to achieve $A=-1, B=4, C=1$ and obtain Equation~(\ref{eqn:Vinberg_body1}). Moreover,  setting
\begin{equation}
  \mathbf{x}_0 = 8 y + 8 H_7 u, \quad \mathbf{x}_1 = 4 u x, \quad \mathbf{x}_2 = 4 x, \quad \mathbf{x}_3=1,
\end{equation}  
in Equation~(\ref{eqn:Vinberg_body1}) we obtain Equation~(\ref{eqn:WEQ}) with $H_8 = H_{10} = H_{12}=0$. A computation shows that the hypersurface exhibits an $\mathbf{A}_{11}$ singularity at the point $(1:0:0:0)$.
\end{proof}
We use Proposition~\ref{prop:geometry}~\ref{prop:geometry.partII} to construct the unique fibration for the Vinberg family of rank $\rho_{S_n}=14$ in Table~\ref{tab:K3JEF_b}. After setting $H_8= \dots = H_{14} =0$ in Equation~\eqref{eqn:WEQ} and switching to the second ruling induced by the projection onto $\mathbb{P}^1_{(x)}$, the Jacobian elliptic fibration has the singular fibers $I_0^* + II^* + 8 I_1$, i.e.\! the frame $(E_8 + D_4, \{ \mathbb{I} \})$, and is given by the Weierstrass equation
\begin{equation}
\label{eqn:weq_alt}
\begin{aligned}
\eta^2
&=
\xi^3
+
x^2
\left(
F_4 x^2 
+
G_{10} x 
+
\tilde{H}_{16}
\right) \xi
+
x^3
\left(
x^4
+
F_6 x^3 
+
\tilde{G}_{12} x^2 
+
\tilde{H}_{18} x 
+
\tilde{H}_{24}
\right),
\end{aligned}
\end{equation}
where we have set
\begin{equation}
\label{eqn:relats_coeffs}
\begin{aligned}
\tilde{G}_{12}
&=
-\frac{1}{3}F_4 G_8 + G_{12},
&
\qquad \tilde{H}_{16}
&=
-\frac{1}{3}G_8^2 + H_{16},
\\[6pt]
\tilde{H}_{18}
&=
-\frac{1}{3}G_8 G_{10} + H_{18},
&
\qquad \tilde{H}_{24}
&=
\frac{1}{27}G_8\left(2G_8^2 - 9H_{16}\right),
\end{aligned}
\end{equation}
and $(\xi, \eta)$ are the fiber coordinates and $x$ is the affine coordinate on the base curve. Let us now explain that this implies that the multiplicity of the other fibration in Table~\ref{tab:K3JEF_b} for $\rho_{S_n}=14$ equals three:  the $I_0^*$-fiber in \eqref{eqn:weq_alt} is located at $x=0$. After undoing the quadratic twist, the corresponding $I_0$-fiber at $x=0$ is given by 
\begin{equation}
\label{eqn:I0fiber}
\eta^2 = \xi^3 + \xi \tilde{H}_{16} + \tilde{H}_{24}.
\end{equation}
Let the three non-trivial $2$-torsion points of the $I_0$-fiber be given by $(\xi,\eta) = \big(\tfrac{1}{3}(\psi_1-\psi_2),0\big)$, $\big(\tfrac{1}{3}(\psi_2-\psi_3),0\big)$, $\big(\tfrac{1}{3}(\psi_3-\psi_1),0\big)$ with $\psi_1 + \psi_2 + \psi_3 = 0$, so that the group $\mathrm{O}(D_4^{\vee}/D_4)$ can naturally be identified with the permutations of $\{ \psi_1, \psi_2, \psi_3\}$. In turn, they determine the coefficients $\tilde{H}_{16}$ and $\tilde{H}_{24}$ in Equation~\eqref{eqn:I0fiber}. However, Equations~\eqref{eqn:relats_coeffs} now turn out to have \emph{three} distinct solutions, namely
\begin{equation}
\begin{aligned}
G_8 &= \psi_1-\psi_2, 
\quad H_{16} = -\psi_1\psi_2, \\[6pt]
G_8 &= \psi_3 - \psi_1, 
\quad H_{16} =  - \psi_1 \psi_3, \\[6pt]
G_8 &= \psi_2 -\psi_3, 
\quad H_{16} = -\psi_2 \psi_3.
\end{aligned}
\end{equation}
Each solution defines, by means of Equation~\eqref{eqn:WEQ}, a Jacobian elliptic fibration with singular fibers $I_8^* + 10 I_1$, but for different moduli. We will determine the modular forms representing $\{ \psi_1, \psi_2, \psi_3\}$ in Section~\ref{ssec:4}. We have the following: 
\begin{remark}
 For $\rho_{S_n}=14$ the multiplicity of the frame with $(K^{\mathrm{root}}, \mathcal{W})=(D_{12}, \{ \mathbb{I} \})$ equals three. This is due to the existence of three distinct primitive embeddings $H \hookrightarrow S_{14}$ leading to non-isomorphic Jacobian elliptic fibrations with the same frame data. It is based on choosing 2 out of 3 rational curves in Figure~\ref{fig:pic14vin_fib2} as part of the reducible fiber $\widetilde{D}_{12}$.
\end{remark}
\subsection{Moduli spaces and unirationality}
The subvariety of parameter points where Equation~(\ref{eqn:WEQ}) fails to define a K3 surface consists of two components, which we shall denote by $\mathcal{C}_1$ and $\mathcal{C}_2$. Both components determine curves in the 10-dimensional weighted projective space $\mathbb{P}(4,6,8,8,10,10,12,12,14,16,18)$ which, by a slight abuse of notation we shall also denote by $\mathcal{C}_1$, $\mathcal{C}_2$.   
\par The first component $\mathcal{C}_1 \cong \mathbb{P}(4,6)$ is cut out by
\begin{equation}
\label{eqn:component_1}
 \mathcal{C}_1 \colon \quad ( G_8, G_{10}, G_{12}, H_8, \dots, H_{18} ) = 0.
\end{equation}
On this component, Equation~(\ref{eqn:WEQ}) carries a one-dimensional singular locus. In fact, one can check via a direct computation that:
\begin{lemma}
The discriminant $\Delta$ of the Weierstrass equation~(\ref{eqn:WEQ}) vanishes identically if and only if condition \eqref{eqn:component_1} holds.   
\end{lemma}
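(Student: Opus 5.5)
The equation has the form $y^2 = x^3 + a_2(u)x^2 + a_4(u)x + a_6(u)$ with
\[
a_2 = u^3 + F_4 u + F_6,\qquad a_4 = G_8u^2+G_{10}u+G_{12},\qquad a_6=\sum_k H_{2k}u^{9-k}.
\]
I would first dispose of the easy direction. If \eqref{eqn:component_1} holds, then $a_4=a_6=0$ and the equation becomes $y^2=x^2(x+a_2(u))$. Every fiber is then a nodal cubic with node at $(0,0)$, so the discriminant
\[
\Delta = 16\bigl(a_2^2a_4^2 - 4a_4^3 - 4a_2^3a_6 + 18a_2a_4a_6 - 27a_6^2\bigr)
\]
vanishes identically. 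Indeed, every monomial contains $a_4$ or $a_6$.

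For the converse, assume $\Delta\equiv 0$ as a polynomial in $u$. Identical vanishing means the cubic in $x$ has a repeated root over the function field $\mathbb{C}(u)$. Since the cubic is monic with polynomial coefficients, that root lies in $\mathbb{C}[u]$ (it is a root of the derivative $3x^2+2a_2x+a_4$, and a rational root of a monic polynomial over the UFD $\mathbb{C}[u]$ is integral). So we may write
\[
x^3+a_2x^2+a_4x+a_6 = (x-r)^2(x-s),\qquad r,s\in\mathbb{C}[u].
\]
Comparing coefficients gives
\[
2r+s=-a_2,\qquad r^2+2rs=a_4,\qquad -r^2s=a_6.
\]

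Degree bookkeeping then forces $r=0$. We have $\deg a_4\le 2$, $\deg a_6\le 5$, and $\deg a_2=3$ exactly. Suppose $\deg r = k\ge 1$.
\begin{itemize}
\item If $k\ge 2$, then $\deg(r^2s)\le 5$ forces $\deg s\le 1$. Hence $\deg(2r+s)=3$ requires $k=3$, and then $\deg(r^2s)\ge 6$ unless $s=0$. But if $s=0$, then $a_4=r^2$ has degree $6$, a contradiction.
\item If $k=1$, then $\deg s=3$ (from $a_2$), so $\deg(r^2+2rs)=4>2$, a contradiction.
\end{itemize}
Thus $r$ is a constant $c$. Then $a_4=c^2+2cs$ with $\deg s=3$ forces $c=0$. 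Hence $a_4=0$ and $a_6=0$, which is exactly \eqref{eqn:component_1}, with $s=-a_2$ and $F_4,F_6$ free.

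The main obstacle is the integrality step and the careful degree case analysis, in particular handling the possibility $s=0$. Everything else is coefficient comparison. Alternatively, one could verify the converse directly by expanding $\Delta$ in $u$: the top coefficients are $-4a_4^3$-type terms balanced against $a_2^3a_6$, and one would peel off $H_8,\dots$ successively. The factorization argument avoids this computation.
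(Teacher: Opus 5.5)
Your proof is correct, but it takes a different route from the paper. The paper offers no argument beyond saying the lemma follows from a direct computation: expand $\Delta$ in powers of $u$ and check that the resulting polynomial system in $F_4,\dots,H_{18}$ forces $G_8=\dots=H_{18}=0$.

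You replace that elimination with the factorization $P(x)=x^3+a_2x^2+a_4x+a_6=(x-r)^2(x-s)$ over $\mathbb{C}[u]$ and a short degree count. This gives an argument a reader can check by hand. It also isolates exactly which features of the normal form matter: $a_2$ is monic of degree exactly $3$ (the normalization $\tilde F_0=1$ from Proposition~\ref{prop:WEQ_rank10}), $\deg a_4\le 2$, and $\deg a_6\le 5$. The leading term of $a_2$ is genuinely essential. If $a_2$ had degree $1$, taking $r$ linear and $s$ a nonzero constant would give $\Delta\equiv 0$ with $a_4,a_6\neq 0$. The direct computation needs no algebraic input, but it is opaque and hard for a reader to verify.

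One step deserves an extra sentence: that the repeated root lies in $\mathbb{C}(u)$ and not merely in its algebraic closure. Over the characteristic-zero field $\mathbb{C}(u)$, $\gcd(P,P')$ is either $x-r$ or $(x-r)^2$. In the triple-root case $r=-a_2/3$. Either way $r\in\mathbb{C}(u)$, and only then does your integrality argument place $r$ in $\mathbb{C}[u]$.
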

Geometrically, parameters in $\mathcal{C}_1$ correspond to a special class of Type II degenerations in the sense of Kulikov--Persson--Pinkham; see \cites{MR506296,MR506295,MR690262}:
\begin{remark}
Parameter points in $\mathcal{C}_1$ correspond, after semistable reduction,
to the situation in which every fiber in Equation~(\ref{eqn:WEQ}) degenerates to an $I_2$-fiber and the K3 surface degenerates to a union of ruled rational surfaces meeting along a common elliptic curve, with the elliptic curve appearing as a bi-section in each ruling. 
\end{remark}
\par The second component corresponds to the $(4,6,12)$-points of the Weierstrass equation~(\ref{eqn:WEQ}) which we can rewrite in the form 
\begin{equation}
 y^2 = x^3 + f(u) x + g(u).
\end{equation} 
Recall that a \((4,6,12)\)-point, as defined in \cite{MR1078016}, is a parameter point where:
\[
\operatorname{ord}(f) \geq 4, \qquad
\operatorname{ord}(g) \geq 6, \qquad
\operatorname{ord}(\Delta) \geq 12,
\]
with $\Delta= 4f^3+27g^2$.
This is the situation when, generically, the Equation~(\ref{eqn:WEQ}) carries a singularity of elliptic type. On this component, the Weierstrass coefficient polynomials $f$ and $g$ are of the form:

\begin{equation}
\begin{split}
 f(u) & = - \frac{1}{3} (u + \alpha)^4 \left(u^2 - 4 \alpha u + 10 \alpha^2 + 2 F_4\right) , \\
 g(u) & =\frac{2}{27}(u+\alpha)^6\left(u^3-6\alpha u^2+(21\alpha^2+3F_4)u-56\alpha^3-18\alpha F_4+3F_6\right).
\end{split}
\end{equation}
A direct computation leads to the following locus, parametrized by
$[\alpha:F_4]\in\mathbb P(2,4)$:
\begin{equation}
\label{eqn:curve_componentII}
\begin{aligned}
\begin{pmatrix}
F_6\\
G_8\\
G_{10}\\
G_{12}
\end{pmatrix}
&=
\begin{pmatrix}
10\alpha^3\\
-15\alpha^4\\
-12\alpha^5\\
30\alpha^6
\end{pmatrix}
+
F_4
\begin{pmatrix}
4\alpha\\
-4\alpha^2\\
4\alpha^3\\
26\alpha^4
\end{pmatrix}
+
F_4^2
\begin{pmatrix}
0\\
\frac{1}{3}\\[0.2em]
\frac{8}{3}\alpha\\[0.2em]
\frac{16}{3}\alpha^2
\end{pmatrix}
\\[1em]
\begin{pmatrix}
H_8\\
H_{10}\\
H_{12}\\
H_{14}\\
H_{16}\\
H_{18}
\end{pmatrix}
&=
\begin{pmatrix}
-\alpha^4\\
-8\alpha^5\\
-30\alpha^6\\
-70\alpha^7\\
-50\alpha^8\\
24\alpha^9
\end{pmatrix}
+
F_4
\begin{pmatrix}
-\frac{2}{3}\alpha^2\\[0.2em]
-\frac{16}{3}\alpha^3\\[0.2em]
-\frac{59}{3}\alpha^4\\[0.2em]
-\frac{128}{3}\alpha^5\\[0.2em]
-\frac{52}{3}\alpha^6\\[0.2em]
\frac{112}{3}\alpha^7
\end{pmatrix}
+
F_4^2
\begin{pmatrix}
-\frac{1}{9}\\[0.2em]
-\frac{8}{9}\alpha\\[0.2em]
-\frac{28}{9}\alpha^2\\[0.2em]
-\frac{46}{9}\alpha^3\\[0.2em]
\frac{46}{9}\alpha^4\\[0.2em]
\frac{152}{9}\alpha^5
\end{pmatrix}
+
F_4^3
\begin{pmatrix}
0\\
0\\
\frac{1}{27}\\[0.2em]
\frac{4}{9}\alpha\\[0.2em]
\frac{16}{9}\alpha^2\\[0.2em]
\frac{64}{27}\alpha^3
\end{pmatrix}.
\end{aligned}
\end{equation}
Also, a Gr\"obner basis computation yields the following:
\begin{lemma}
\label{lemma:ideal}
The ideal of the image curve~\eqref{eqn:curve_componentII} is minimally generated by
17 weighted-homogeneous generators of weights
\[
12,12,14,14,16,16,16,16,16,18,18,18,18,18,20,20,20 . 
\]
The precise polynomial expressions are given in Appendix~\ref{sec:ideal}, Equations~\eqref{eqn:weight12_14}--\eqref{eqn:weight20}. 
\end{lemma}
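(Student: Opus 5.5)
The plan is to compute the vanishing ideal of the curve \eqref{eqn:curve_componentII} directly by elimination, and then check minimality degree by degree.

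\emph{Setting up the map.} Consider the graded $\mathbb{C}$-algebra homomorphism
\[
\phi\colon \mathbb{C}[F_4,F_6,G_8,G_{10},G_{12},H_8,\dots,H_{18}] \to \mathbb{C}[\alpha,F_4],
\]
with $\deg\alpha=2$ and $\deg F_4=4$. It sends $F_4\mapsto F_4$ and each remaining coefficient to the polynomial in \eqref{eqn:curve_componentII}. Every such image is weighted homogeneous of the weight of its coefficient. For example, $F_6\mapsto 10\alpha^3+4F_4\alpha$ has weight $6$. Hence $\ker\phi$ is a weighted-homogeneous prime ideal, and it is exactly the ideal of the image curve. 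To compute it:
\begin{itemize}
\item form the graph ideal $J=\big(X-\phi(X)\big)_X$ in the ring with $\alpha$ adjoined;
\item compute a Gr\"obner basis of $J$ for an elimination order with $\alpha$ largest;
\item keep the basis elements not involving $\alpha$.
\end{itemize}
These generate $\ker\phi$.

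\emph{Simplifying the elimination.} The $F_6$-equation is linear in $F_4$. One can solve $F_4=(F_6-10\alpha^3)/(4\alpha)$ on $\alpha\neq0$. This reduces the problem to understanding $\alpha$ as an algebraic function on the curve. Lower-weight relations should then appear by combining the $G_8$, $F_6$ and $F_4$ coordinates. A check: the first relation should occur in weight $12$, expressing $G_8$, $F_6^2$ and $F_4^3$ against $\alpha$-free combinations after eliminating $\alpha^2$. In practice I would let the Gr\"obner computation produce the basis and not derive relations by hand.

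\emph{Minimality.} Since the ideal $I=\ker\phi$ is homogeneous with generators in positive degree, graded Nakayama applies. The minimal number of generators in weight $d$ equals $\dim_{\mathbb{C}}(I/\mathfrak{m}I)_d$, with $\mathfrak{m}$ the irrelevant ideal, and this count is independent of the chosen generating set. I would compute it as follows:
\begin{itemize}
\item take the Hilbert function $\dim I_d = \dim R_d - \dim(\mathrm{im}\,\phi)_d$;
\item compute $\dim(\mathfrak{m}I)_d$ from the lower-weight generators;
\item equivalently, run a minimal-generators routine on the homogeneous basis, or compute graded Betti numbers $\beta_{1,d}$.
\end{itemize}
This should produce the multiset $12,12,14,14,16^5,18^5,20^3$, which is 17 generators in total.

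\emph{Main obstacle.} The delicate step is certifying minimality and completeness, not producing relations. Two checks are needed:
\begin{itemize}
\item no generator of weight $>20$ is required, meaning the Gr\"obner basis is complete and reduces to degree $\le 20$;
\item none of the weight-$16$ through $20$ polynomials lies in $\mathfrak{m}I$.
\end{itemize}
I would verify the first by confirming that the Hilbert series of $R/I$ matches the Hilbert series of the image subring of $\mathbb{C}[\alpha,F_4]$, which is a one-dimensional projective curve. For the second I would use the explicit linear-algebra rank computation in each weight described above. The resulting explicit generators are those recorded in Appendix~\ref{sec:ideal}.
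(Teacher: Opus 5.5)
Your proposal is correct and matches the paper, which justifies the lemma only as the output of a Gr\"obner basis (elimination) computation for the kernel of the parametrization map; your graded-Nakayama count of minimal generators in each weight makes explicit the minimality check that the paper leaves implicit. One small correction: the Hilbert-series comparison is a genuine completeness test only when applied to the ideal generated by the 17 candidate polynomials (after checking that they vanish on the parametrization), since for $\ker\phi$ itself the equality holds tautologically.
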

We denote by $\mathcal{C}_2$ the image of the parametrization~\eqref{eqn:curve_componentII}, equivalently the curve cut out by the equations of Lemma~\ref{lemma:ideal}.
\begin{remark}
In terms of Type II semistable degenerations (see \cites{MR1078016,MR4486788}), the parameter points in component $\mathcal{C}_2$ correspond to the situation when the Weierstrass fibration~(\ref{eqn:WEQ}) remains an elliptic fibration but the total space acquires an elliptic singularity. After performing a resolution of singularities, one obtains a union of two elliptic rational surfaces meeting along an elliptic curve, with the common elliptic curve appearing as a fiber of the elliptic fibration on each component.
\end{remark}
Recall then (see \cite{MR2355598}) that, on a K3 surface, Jacobian elliptic fibrations with a single reducible $D_8$-fiber (and necessarily trivial Mordell--Weil group, see \cite{MR1813537}) are equivalent to quasi-ample $H \oplus D_8(-1)$ polarizations. Connecting the above with the proof of Proposition \ref{prop:WEQ_rank10}, one then obtains:  
\begin{proposition}
The moduli space $\mathscr M_S$ of
$S=H\oplus D_8(-1)$-polarized K3 surfaces is identified with
\[
\mathbb{P}(4,6,8,8,10,10,12,12,14,16,18)
\setminus
(\mathcal{C}_1\cup\mathcal{C}_2).
\]
In particular, this space $\mathscr{M}_S$ is unirational.
\end{proposition}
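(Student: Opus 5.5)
To prove that $\mathscr M_S \cong \mathbb{P}(4,6,8,8,10,10,12,12,14,16,18)\setminus(\mathcal{C}_1\cup\mathcal{C}_2)$, I would construct a map in each direction and check that they are mutually inverse.

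\textbf{From parameters to surfaces.} Take a point outside $\mathcal{C}_1\cup\mathcal{C}_2$. By the lemma on the discriminant, $\Delta\not\equiv 0$ off $\mathcal{C}_1$, so Equation~\eqref{eqn:WEQ} is a genuine Weierstrass fibration. Off $\mathcal{C}_2$, the computation preceding Lemma~\ref{lemma:ideal} shows that there is no $(4,6,12)$-point at finite $u$. I must also rule out a non-minimal point at $u=\infty$. For this I pass to the chart $u=1/s$ and read off the vanishing orders of $f$, $g$, $\Delta$ there. The normalization $\tilde F_0=1$ in the proof of Proposition~\ref{prop:WEQ_rank10} forces these orders to be exactly those of an $I_4^*$ fiber, namely $(2,3,10)$.

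Hence the minimal resolution is a K3 surface. The degrees of $f$ and $g$ are at most $8$ and $12$ in the appropriate sense, and the model is minimal. The fibration carries a fiber of type $I_4^*$ or worse at $\infty$, together with the zero section. The classes $\sigma$, $f$ and the non-identity components of that fiber span a primitive copy of $H\oplus D_8(-1)$ containing the quasi-ample class $\sigma+kf$ for $k$ large. This gives a canonical $S$-polarization, as in Proposition~\ref{prop:WEQ_rank10}, and it depends only on the weighted-projective class of the coefficients.

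\textbf{From surfaces to parameters.} Conversely, let $(\mathcal X,i)$ be an $S$-polarized K3 surface. By \cite{MR2355598}, the polarization is equivalent to a Jacobian elliptic fibration with a $D_8$-type reducible fiber. Proposition~\ref{prop:WEQ_rank10} then puts that fibration into the normal form \eqref{eqn:WEQ}. That normal form is unique up to the $\mathbb{C}^\times$-rescaling with weights \eqref{eqn:weights}; this uses Lemma~\ref{lem:multiplicity} for $n=8$, i.e.\ multiplicity one. The resulting parameter point lies off $\mathcal{C}_1$, since $\mathcal X$ is smooth and elliptic, and off $\mathcal{C}_2$, since the Weierstrass model of a K3 surface is minimal. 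This defines the inverse map.

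\textbf{Bijectivity and unirationality.} I would argue that the two maps are mutually inverse, and that isomorphisms of polarized surfaces correspond exactly to rescalings, by the last sentence of the proof of Proposition~\ref{prop:WEQ_rank10}. Both maps come from a family over the parameter space, so they are morphisms of coarse spaces. The target is an open subset of a weighted projective space, which is rational, so $\mathscr M_S$ is rational and in particular unirational.

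\textbf{Main obstacle.} The delicate step is the degenerate-fiber bookkeeping. I need that every polarization in the quasi-ample sense of \cite{MR2355598} actually yields a fiber at $\infty$ of type at least $I_4^*$ with the prescribed shape, rather than a fiber with extra components that changes the normalization. I also need that the two excluded loci exhaust all failures, including possible non-minimality at $u=\infty$ when special coefficients vanish. I would settle the latter by the explicit local computation at $s=0$ described above: with $\tilde F_0=1$, the order of $f$ there is at most $2$, so that point can never be of type $(4,6,12)$.
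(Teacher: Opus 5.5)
Your proposal is correct and follows essentially the same route as the paper. It uses the equivalence from \cite{MR2355598} between quasi-ample $H\oplus D_8(-1)$-polarizations and Jacobian fibrations with a $D_8$-type fiber, then the normal form of Proposition~\ref{prop:WEQ_rank10} with uniqueness up to rescaling from multiplicity one, and finally the removal of $\mathcal{C}_1$ (identically vanishing discriminant) and $\mathcal{C}_2$ ($(4,6,12)$-points at finite $u$); your explicit check at $u=\infty$ makes precise a point the paper leaves implicit.

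One small slip: at $u=\infty$ the vanishing orders are $(2,3,\geq 10)$, not exactly $(2,3,10)$. One has $\operatorname{ord}_\infty \Delta = 10$ only when $H_8\neq 0$; otherwise the fiber becomes $I_n^*$ with $n>4$ (cf.\ Remark~\ref{rem:11-14}). Since your minimality argument only uses $\operatorname{ord}_\infty f = 2$, and you later say ``$I_4^*$ or worse,'' nothing breaks.
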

The curves $\mathcal{C}_1$ and $\mathcal{C}_2$ therefore constitute the
discriminant locus in the weighted-projective parameter space.
\par Versions of the above result also hold for the remaining lattice polarizations of interest in this paper. Note that, by Vinberg's work \cite{Vinberg2010}, \cite{Vinberg2018}, the graded rings of modular forms for $\mathrm{O}^+(H^{\oplus 2} \oplus D_n(-1))$, $n \le 8$ are known to be freely generated. Wang and Williams \cite{WW21} have also shown that the graded rings of meromorphic modular forms for $\mathrm{O}^+(H^{\oplus 2} \oplus D_n(-1))$, $9 \le n \le 11$, with poles allowed along certain special hyperplane arrangements, are also freely generated. Following the considerations above, one obtains moduli spaces for the corresponding lattice-polarized K3 surfaces, in Picard number $7$ or higher, realized as open subsets of weighted projective spaces. These moduli spaces are therefore unirational. 
\section{Jacobi forms of lattice index and orthogonal modular forms}

\subsection{Jacobi forms of lattice index}

Jacobi forms of lattice index are a natural generalization of Jacobi forms as defined by Eichler--Zagier \cite{EZ85}. Suppose $L$ is a positive-definite even lattice, i.e. a free $\mathbb{Z}$-module of finite rank equipped with a quadratic form $Q : L \rightarrow \mathbb{Z}$ such that the bilinear form $$x \cdot y := Q(x+y) - Q(x) - Q(y)$$ is positive definite. Let $L_{\mathbb{C}} = L \otimes \mathbb{C}$. A Jacobi form (without holomorphy at $\infty$) of index $L$ is a holomorphic function $f : \mathbb{H} \times L_{\mathbb{C}} \rightarrow \mathbb{C}$ that satisfies \begin{equation}\label{eq:jacobi_modular} f \Big( \frac{a \tau + b}{c \tau + d}, \frac{z}{c \tau + d} \Big) = (c \tau + d)^k e^{2\pi i c Q(z) / (c \tau + d)} f(\tau, z), \quad \begin{pmatrix} a & b \\ c & d \end{pmatrix} \in \mathrm{SL}_2(\mathbb{Z}) \end{equation} and \begin{equation} \label{eq:quasiperiod} f(\tau, z + \lambda \tau + \mu) = e^{-2\pi i Q(\lambda) \tau - 2\pi i (\lambda \cdot z)} f(\tau, z), \quad \lambda, \mu \in L, \end{equation} where $\tau \in \mathbb{H}$ and $z \in L_{\mathbb{C}}$. The space of Jacobi forms of index $L$ and weight $k$ will be denoted $J_{k,L}$.
A Jacobi form of scalar index $m \in \mathbb{N}$ is then a Jacobi form of index $(\mathbb{Z}, Q(x) = mx^2)$. \\
Equations (\ref{eq:jacobi_modular}) and (\ref{eq:quasiperiod}) imply that $f$ has a Fourier series $$f(\tau, z) = \sum_{n = -\infty}^{\infty} \sum_{\ell \in L^{\vee}} c(n, \ell) q^n \zeta^{\ell}, \quad q = e^{2\pi i \tau}, \; \zeta^{\ell} := e^{2\pi i (\ell \cdot z)}.$$ $f$ is called \emph{weakly holomorphic} if $c(n, \ell) = 0$ for all sufficiently small $n$, and is called \emph{weak} if $c(n,\ell) = 0$ whenever $n < 0$. \\

A basis for the space of quasiperiodic functions in the sense of (\ref{eq:quasiperiod}) is given by the theta functions $$\vartheta_{L + \gamma}(\tau, z) = \sum_{x \in L + \gamma} e^{\pi i \tau (x \cdot x) + 2\pi i (x \cdot z)}, \quad \gamma \in L^{\vee}/L,$$ where $L^{\vee} = \{y \in L \otimes \mathbb{Q}: \; L \cdot y \subseteq \mathbb{Z}\}$ is the dual of $L$. It follows that $f$ admits a unique \emph{theta decomposition} $$f(\tau, z) = \sum_{\gamma \in L^{\vee}/L} f_{\gamma}(\tau) \vartheta_{L + \gamma}(\tau, z).$$ The vector $F = (f_{\gamma})_{\gamma}$ then transforms as a modular form of weight $k - \frac{1}{2} \mathrm{rank}(L)$ with respect to a representation of $\mathrm{SL}_2(\mathbb{Z})$ (if $\mathrm{rank}(L)$ is even) or its metaplectic cover $\mathrm{Mp}_2(\mathbb{Z})$ (if $\mathrm{rank}(L)$ is odd), the Weil representation attached to $L$.
The components $f_{\gamma}$ then have $q$-expansions $$f_{\gamma}(\tau) = \sum_{n \in \mathbb{Z} - Q(\gamma)} c_{\gamma}(n) q^n.$$ $f$ is called \emph{holomorphic} if $c_{\gamma}(n) = 0$ whenever $n < 0$, and is called a \emph{cusp form} if $c_{\gamma}(n) = 0$ whenever $n \le 0$. Hence one has increasingly strong adjectives
$$\left( \begin{gathered} \text{weakly holomorphic} \\ \text{Jacobi forms} \end{gathered} \right) \supseteq \left( \begin{gathered} \text{weak} \\ \text{Jacobi forms} \end{gathered}\right) \supseteq \left( \begin{gathered} \text{holomorphic} \\ \text{Jacobi forms} \end{gathered}\right) \supseteq \left( \begin{gathered} \text{Jacobi} \\ \text{cusp forms} \end{gathered}\right).$$

If $\iota : L \rightarrow M$ is an embedding of lattices then we obtain a \emph{pullback} map on Jacobi forms in the opposite direction, $$\iota^* : J_{k, M} \longrightarrow J_{k, L}, \quad \iota^* f(\tau, z) := f(\tau, \iota z)$$ which respects the notions of weakly-holomorphic, weak, holomorphic and cusp form.

\subsection{Wirthm\"uller's theorem}

The most important algebra structure result for Jacobi forms of lattice index is Wirthm\"uller's theorem. Let $R$ be an irreducible root system, viewed as a subset of Euclidean space, and let $L = L_R$ be its root lattice, i.e. the integral lattice generated by the roots in $R$ with bilinear form given by the standard Euclidean scalar product if $L$ is even or the Euclidean product rescaled by two if $L$ is odd. If $R$ is a root system of ADE type then $L$ is the corresponding ADE lattice; if $R = B_m$ then $L = m A_1$; if $R = C_m$ then $L = D_m$; if $R = F_4$ then $L = D_4$; and if $R = G_2$ then $L = A_2$. \\
In this paper, we always take the following model for the root lattices $A_n$ and $D_n$: \begin{align*} D_n &= \{x = (x_1,...,x_n) \in \mathbb{Z}^n: \; \sum x_i \in 2 \mathbb{Z}\}, \\ A_{n-1} &= \{x = (x_1,...,x_n) \in \mathbb{Z}^n: \; \sum x_i = 0\},\end{align*} with the Euclidean scalar product. The exceptional root lattices $E_6,E_7,E_8$ will not be used. \\

In all cases, the Weyl group $W = W(R)$ naturally acts on $L$, and on the rescaled lattice $L(m)$ for any $m \in \mathbb{N}$. A Jacobi form of index $L(m)$ is Weyl-invariant if it satisfies $$f(\tau, \sigma z) = f(\tau, z) \quad \text{for all} \; \sigma \in W.$$
We define the bigraded ring of Weyl-invariant weak Jacobi forms as $$J_{*, *}^{\mathrm{weak}, W_R} := \bigoplus_{k \in \mathbb{Z}} \sum_{m=0}^{\infty} J_{k, L(m)}^{\mathrm{weak}, W_R},$$ where $J_{k, L(0)}^{\mathrm{weak}, W_R} := \mathbb{C}[E_4, E_6]$ is the ring of modular forms and where $J_{k, L(m)}^{\mathrm{weak}, W_R}$ is the space of Weyl-invariant weak Jacobi forms of weight $k$ and index $L(m)$ for $m \ge 1$.

\begin{theorem}\label{theorem:wirthmueller} Suppose $R \ne E_8$. Then $J_{*, *}^{\mathrm{weak}, W_R}$ is a freely generated algebra on $E_4, E_6$ and on weak Jacobi forms in the following weights and indices:

\begin{table}[!ht]
\begin{tabular}{|l|l|}
\hline
$R$ & $(k, m)$ \\ \hline \hline
$A_n$, $n \ge 1$ & $(k, 1)$, $-(n+1) \le k \le -2$ and $k=0$ \\ \hline
$B_n$, $n \ge 2$ & $(2k, 1)$, \; $-n \le k \le 0$ \\ \hline
$C_n$, $n \ge 3$ & $(-4,1),(-2,1),(0,1)$ and $(2k, 2)$, $-n \le k \le -3$ \\ \hline
$D_n$, $n \ge 4$ & $(-n,1),(-4,1),(-2,1),(0,1)$ and $(2k,2)$, $-(n-1) \le k \le -3$ \\ \hline
$E_6$ & $(-5,1),(-2,1),(0,1),(-9,2),(-8,2),(-6,2),(-12,3)$ \\ \hline
$E_7$ & $(-2,1),(0,1),(-10,2),(-8,2),(-6,2),(-14,3),(-12,3),(-18,4)$ \\ \hline
$F_4$ & $(-2,1),(0,1),(-8,2),(-6,2),(-13,3)$ \\ \hline
$G_2$ & $(-2,1),(0,1),(-6,2)$ \\ \hline
\end{tabular}
\end{table}

\end{theorem}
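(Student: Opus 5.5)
This is Wirthmüller's theorem, and it is cited rather than reproved here. The plan below is the strategy of his proof, simplified by the later arguments of Wang, which the paper relies on.

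\emph{Step 1: upper bound on the ring.} Fix the Weyl-invariant theta decomposition and consider the restriction map to $z=0$ together with its Taylor development in Weyl-invariant polynomials. This means expanding $\phi(\tau,z)$ in the invariant polynomials $\mathbb{C}[\mathfrak h]^{W}$, which are freely generated in the degrees $d_i$ of $W$ by Chevalley's theorem. The leading coefficient in this expansion is a modular form. One shows the following: a weak Weyl-invariant form of index $L(m)$ vanishing to order $\ge d$ on the Cartan subalgebra must have weight at least $d-12m\cdot(\ldots)$. Equivalently, a form of weight $k$ is determined by finitely many Taylor coefficients. This yields the bound $\dim J^{\mathrm{weak},W}_{k,L(m)}\le$ the coefficient extracted from
\[
\frac{1}{(1-x^4)(1-x^6)\prod_j (1-x^{k_j}y^{m_j})}.
\]

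\emph{Step 2: construct the generators.} Produce explicit forms of the indicated weights and indices.
\begin{itemize}
\item For $A_n$, take the index-one form $\prod \vartheta(\tau,z_i)/\eta^3$-type products together with the differential operators applied to $\vartheta$-quotients; equivalently, use the expansion of $\prod_{i}\vartheta(\tau,z_i-t)/\eta^3$ in $t$.
\item For $D_n$, use $\prod_i \vartheta(\tau,z_i)/\eta^3$ in weight $-n$, the forms $\sum_i \wp$-type symmetric functions of $\vartheta_j(z_i)^2$, and squares for the index-$2$ generators.
\item Handle the exceptional types via pullbacks and explicit constructions.
\end{itemize}

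\emph{Step 3: algebraic independence.} Prove independence by checking that the $q^0$-terms, which are Weyl-invariant Laurent polynomials in $\zeta$, restrict to a generating set of the invariant ring of the affine Weyl group at $q=0$. Equivalently, check that their leading Taylor terms in $z$ are algebraically independent Chevalley invariants. The dimension count from Step 1 then matches the generating series exactly, which gives freeness.

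The main obstacle is Step 1, the sharp dimension bound via the "vanishing order versus weight" argument. For $E_8$ this bound genuinely fails, which is why $R\ne E_8$ is required.
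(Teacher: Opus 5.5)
The paper contains no proof of this statement to compare with. It is quoted from Wirthm\"uller as a black box, and Wang is cited only for the failure of freeness at $E_8$, not as an ingredient. So your opening sentence matches the paper's treatment exactly. The outline you append, however, would not prove the theorem, and the main gap is in Step~1.

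\textbf{Step 1.} The only general relation between weight and vanishing order is the following. If $0\neq\phi\in J^{\mathrm{weak},W}_{k,L(m)}$ has Taylor expansion at $z=0$ beginning in degree $d$, its leading term is $\sum_j f_j(\tau)P_j(z)$ with $P_j\in\mathbb{C}[\mathfrak h]^W_d$ and $f_j\in M_{k+d}(\mathrm{SL}_2(\mathbb{Z}))$. Hence $k\ge -d$: the weight is bounded below by $-d$, not by an increasing function of $d$ as you wrote.

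Knowing that a form is ``determined by finitely many Taylor coefficients'' then gives at best
$\dim J^{\mathrm{weak},W}_{k,L(m)}\le\sum_{d\le D}\dim\mathbb{C}[\mathfrak h]^W_d\cdot\dim M_{k+d}$.
No cutoff $D$ makes this sharp beyond rank one. In rank one this is Eichler--Zagier's argument, and it is sharp there only because the degree cutoff coincides with the index condition. Already for $D_4$ in index $1$, the theorem gives exactly two independent forms of weight $-4$. Yet $W(D_4)$ has three independent invariants of degree $4$, namely $p_2^2$, $p_4$ and $z_1z_2z_3z_4$. The third direction is reached only in index $2$, by the square of the weight $-2$ generator.

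The series you write counts monomials $\prod_i P_{d_i}^{e_i}$ subject to $\sum_i a_ie_i\le t$, where $a_i$ is the index of the $i$-th generator. That constraint is invisible to degree and weight.

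What one can see directly is the $q^0$-term. The relation $c(n,\ell)=c(n+\ell\cdot\lambda+tQ(\lambda),\ell+t\lambda)$, together with weakness, gives $\ell\cdot\alpha\le t$ for every root $\alpha$ of $L$. So the $q^0$-term lies in the span of $W$-orbit sums of dominant weights of level $\le t$. This bounds the $M_*$-rank of $J^{\mathrm{weak},W}_{*,L(t)}$, but it says nothing about the weights of a basis.

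Determining those weights is the actual theorem. Wirthm\"uller obtains it through the structure of $W$-invariant theta functions (Looijenga, Bernstein--Schwarzman); Wang uses a different automorphic argument. It is precisely this weight distribution that fails to be polynomial for $E_8$. In your outline it is the unexplained ``one shows''.

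\textbf{Step 2.} Squares or products of index-one generators lie in the subalgebra those generators span. So they cannot serve as the index-$2$ generators of weights $-6,\dots,-2(n-1)$ for $D_n$. Those generators, and the ones for $B_n$, $C_n$ and the exceptional types, require genuine constructions.

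\textbf{Step 3.} The two criteria you give are not equivalent, and neither applies to all generators at once.
\begin{itemize}
\item There are $\operatorname{rank}(R)+1$ generators, and the weight-$0$ one has a nonzero constant as its leading Taylor term.
\item Any $\operatorname{rank}(R)+1$ elements of $\mathbb{C}[L^\vee]^W$, a ring of Krull dimension $\operatorname{rank}(R)$, are algebraically dependent unless the index is tracked.
\end{itemize}
What does work is the following. Show that the leading Taylor terms of the $\operatorname{rank}(R)$ negative-weight generators are algebraically independent basic invariants, and that $\phi_0(\tau,0)\neq0$. In a bihomogeneous relation the index then fixes the exponent of $\phi_0$ in each monomial, and comparing lowest-order Taylor terms kills every coefficient.

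Carrying out this check would also have shown that the $F_4$ entry $(-13,3)$ in the statement is a misprint for $(-12,3)$. Since $-1\in W(F_4)$, all odd weights are ruled out. The degrees of $W(F_4)$ are $2,6,8,12$. The paper's own weight list for $F_4$ ends in $24$, the weight of $\Delta^3\phi$ with $\phi$ of weight $-12$.
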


In the exceptional case $R = E_8$, Wang \cite{Wang21} proved that the algebra of Weyl-invariant weak Jacobi forms is not freely generated. \\

For $R = A_n$ (and $R = B_n$), Bertola \cite{Bertola2000} constructed a natural set of generators of the algebra $J_{*, *}^{\mathrm{weak}, W_R}$. We recall Bertola's construction: \\

Let $\wp(\tau, z)$ be the Weierstrass elliptic function for the lattice $\mathbb{Z} + \tau \mathbb{Z}$, $$\wp(\tau, z) = \frac{1}{z^2} + \sum_{\substack{m, n \in \mathbb{Z} \\ (m, n) \ne (0, 0)}} \Big( \frac{1}{(z + m \tau + n)^2} - \frac{1}{(m \tau + n)^2} \Big),$$ and write $P(\tau, z) := \frac{1}{(2\pi i)^2} \wp(\tau, z)$ for the normalization with Fourier series  $$P(\tau, z) = \frac{\zeta^{-1} + 10 + \zeta}{12(\zeta^{-1} - 2 + \zeta)} + \sum_{n=1}^{\infty} \Big( \sum_{d | n} d (\zeta^{-d} - 2 + \zeta^d) \Big) q^n, \quad \mathrm{im}(\tau) > |\mathrm{im}(z)|,$$ where $q = e^{2\pi i \tau}$, $\zeta = e^{2\pi i z}$. Let $\vartheta$ be the odd Jacobi theta function \begin{align*} \vartheta(\tau, z) &= \sum_{n=-\infty}^{\infty} (-1)^n q^{(n+1/2)^2 / 2} \zeta^{n+1/2} \\ &= q^{1/8} (\zeta^{1/2} - \zeta^{-1/2}) \prod_{n=1}^{\infty} (1 - q^n) (1 - q^n \zeta) (1 - q^n \zeta^{-1}). \end{align*}
Then $\wp$ is a meromorphic Jacobi form of weight $2$ and index $0$, and $\vartheta$ is a holomorphic Jacobi form of weight $1/2$ and index $1/2$ (with appropriate multiplier systems). 
For $z = (z_1,...,z_{n+1}) \in A_n \otimes \mathbb{C}$, define the Wronskian determinants $$W^{(n; k)}(z) := \mathrm{det} \begin{pmatrix} 1 & P(z_1) & \cdots & \widehat{D^k P(z_1)} & \cdots & D^n P(z_1) \\ \cdots & \cdots & \cdots & \cdots & \cdots & \cdots \\ 1 & P(z_n) & \cdots & \widehat{D^k P(z_n)} & \cdots & D^n P(z_n) \end{pmatrix}, \quad 0 \le k \le n,$$ where $D$ is the normalized derivative $D = \frac{1}{2\pi i} \frac{\partial}{\partial z} = \zeta \frac{\partial}{\partial \zeta}$, and where the $(k+1)$st column of the matrix is omitted. Let $\Theta$ be the theta block $$\Theta(\tau, z) := \frac{\prod_{i=1}^{n+1} \vartheta(\tau, z_i)}{\Delta(\tau)^{(n+1)/4}}.$$ Finally, define $$\phi_0^{(n)}(\tau, z) := 12 \cdot \frac{(-1)^n}{n!} \Theta(\tau, z) \times \frac{W^{(n; 0)}}{W^{(n; n)}}$$ and $$\phi_{-k-1}^{(n)}(\tau, z) := \frac{(-1)^n}{n!} \Theta(\tau, z) \times \frac{W^{(n; k)}}{W^{(n; n)}}.$$

\begin{theorem} The functions $\phi_k^{(n)}$, $-(n+1) \le k \le -2$ and $k=0$, are weak Jacobi forms of index $A_n$ and weight $k$ and, together with $E_4, E_6,$ they generate the algebra of $W(A_n)$-invariant weak Jacobi forms.
\end{theorem}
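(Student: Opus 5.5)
Verifying that $\phi_k^{(n)}$ is a weak Jacobi form of weight $k$ and index $A_n$ comes down to three checks: the transformation laws \eqref{eq:jacobi_modular} and \eqref{eq:quasiperiod}, holomorphy on $\mathbb{H}\times (A_n\otimes\mathbb{C})$, and boundedness of $q$-exponents at the cusp. Generation then follows from Wirthm\"uller's theorem (Theorem~\ref{theorem:wirthmueller}), once we show the $\phi_k^{(n)}$ are algebraically independent in the predicted weights.

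\emph{Transformation laws.} On $A_n\otimes\mathbb{C}$ we have $\sum z_i=0$.
\begin{itemize}
\item Under $z_i\mapsto z_i+\lambda_i\tau+\mu_i$ with $\lambda,\mu\in A_n$, each $P(z_i)$ and its $D$-derivatives are elliptic, so the Wronskian ratios are invariant. The factor $\prod_i\vartheta(z_i)$ picks up $\prod_i e^{-\pi i\lambda_i^2\tau-2\pi i\lambda_i z_i}$ times signs. The signs multiply to $(-1)^{\sum(\lambda_i+\mu_i)}=1$ because $\sum\lambda_i=\sum\mu_i=0$. This gives exactly \eqref{eq:quasiperiod} for the lattice $A_n$, since $Q(\lambda)=\tfrac12\sum\lambda_i^2$.
\item Under $\mathrm{SL}_2(\mathbb{Z})$, $D^jP$ transforms with weight $j+2$ and index $0$, up to quasi-modular correction terms coming from lower derivatives. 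Those corrections are column operations, so they cancel in each determinant.
\item Hence $W^{(n;k)}$ is a meromorphic Jacobi form of weight $\sum_{j=1}^n(j+2)-(k+2)$ and index $0$, and $W^{(n;k)}/W^{(n;n)}$ has weight $n-k$.
\item $\Theta$ has weight $\tfrac{n+1}{2}-3(n+1)=-\tfrac{5(n+1)}{2}$ and index $A_n$. The multiplier of $\vartheta^{n+1}$ cancels against $\eta^{3(n+1)}=\Delta^{(n+1)/4}$.
\end{itemize}
One then checks that the weights add up to $-k-1$. I would fix the normalization by comparing leading terms rather than tracking each weight piece separately.

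\emph{Holomorphy (the main obstacle).} The poles of $W^{(n;k)}$ lie on $z_i\in\mathbb{Z}+\tau\mathbb{Z}$, where $\vartheta(z_i)$ vanishes. The zeros of the denominator $W^{(n;n)}$ are also a concern. I would handle both through Frobenius' determinant formula for the elliptic Vandermonde:
\[
W^{(n;n)}=c_n\,\frac{\vartheta(\sum_i z_i)\prod_{i<j}\vartheta(z_i-z_j)}{\prod_i\vartheta(z_i)^{n+1}}\cdot\eta^{\bullet}.
\]
On the hyperplane $\sum z_i=0$ the numerator factor $\vartheta(\sum z_i)$ vanishes identically, so $W^{(n;n)}$ must be read after dividing by this factor (equivalently, working on the hyperplane). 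Making this step rigorous is the hardest part. I would approach it by viewing $1,P,\dots$ as the basis of elliptic functions with poles at $0$, and realizing each $\phi_{-k-1}^{(n)}$ as the coefficient of $\wp^{(k)}$-type terms in the partial fraction expansion of
\[
\frac{\prod_i\vartheta(\tau,x+z_i)}{\vartheta(\tau,x)^{n+1}}
\]
as an elliptic function of $x$ (Bertola's generating-function picture). Written this way, $\Theta$ multiplied by these coefficients is visibly holomorphic in $z$, and the coefficients inherit the transformation laws automatically.

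\emph{Weakness and generation.} From the product expansions, the $q^0$ terms are Laurent polynomials in the $\zeta_i$, which gives weakness. At $q=0$ the generating function degenerates to
\[
\prod_i\frac{\zeta_i^{1/2}x'^{1/2}-\zeta_i^{-1/2}x'^{-1/2}}{x'^{1/2}-x'^{-1/2}},
\]
so the $q^0$ terms of the $\phi_k^{(n)}$ are, up to triangular change, the elementary symmetric functions of the $\zeta_i$. These are algebraically independent Weyl invariants, so the $\phi_k^{(n)}$ are algebraically independent. Their weights and indices match Wirthm\"uller's list for $A_n$ exactly. The standard argument then gives generation: by induction on index, subtract a polynomial in the $\phi$'s with matching $q^0$ term and divide by $\Delta$, which lowers the weight while keeping the index fixed, and conclude using the Hilbert-series comparison with Theorem~\ref{theorem:wirthmueller}.
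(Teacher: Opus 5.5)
\noindent The paper does not prove this statement; it quotes it from Bertola~\cite{Bertola2000}. Your outline is essentially Bertola's argument, and its first half is sound once everything is routed through $G(x)=\prod_i\vartheta(\tau,x+z_i)/\vartheta(\tau,x)^{n+1}$.
\begin{itemize}
\item $G$ is elliptic in $x$ because $\sum_i z_i=0$.
\item Cramer's rule applied to $G(-z_i)=0$ $(i\le n)$ identifies the coefficients of $G$ in the basis $1,P,DP,\dots,D^{n-1}P$ with $\pm\Theta\,W^{(n;k)}/W^{(n;n)}$. The top coefficient is a constant multiple of $\prod_i\vartheta(\tau,z_i)/\vartheta'(\tau,0)^{n+1}$.
\item These coefficients are finite combinations of Laurent coefficients of $G$ at $x=0$, hence holomorphic in $z$.
\item They contain no negative powers of $q$, because numerator and denominator of $G$ both begin at $q^{(n+1)/8}$. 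That is what ``weak'' means; the shape of the $q^0$-term is beside the point.
\item $G$ is visibly $S_{n+1}$-symmetric. This gives the Weyl invariance that is not manifest in the Wronskians and that you never check.
\end{itemize}
The step that carries generation, however, is false as stated.

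\noindent You assert that the $q^0$-terms of the $\phi^{(n)}_k$ are, up to a triangular change, the elementary symmetric functions of the $\zeta_i$, and hence algebraically independent. But there are $n+1$ forms $\phi^{(n)}_0,\phi^{(n)}_{-2},\dots,\phi^{(n)}_{-(n+1)}$. On $A_n\otimes\mathbb{C}$ one has $e_{n+1}(\zeta)=\prod_i\zeta_i=1$, so only $e_1,\dots,e_n$ are available. Since $n+1$ functions on an $n$-dimensional torus are never algebraically independent, the claim cannot hold.

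Explicitly, put $\xi=e^{2\pi i x}$. At $q=0$ one has $P=\tfrac1{12}+\xi/(1-\xi)^2$, and $D^mP=D^m\big(\xi/(1-\xi)^2\big)$ vanishes at $\xi=0$ for $m\ge1$. Meanwhile $G|_{q=0}=\prod_i(\xi\zeta_i-1)/(\xi-1)^{n+1}$ equals $1$ at $\xi=0$. So the coefficients satisfy $c_0+\tfrac1{12}c_1=1$ at $q^0$. Thus $\phi^{(n)}_0|_{q^0}$ and $\phi^{(n)}_{-2}|_{q^0}$ differ, up to sign, by a nonzero constant; for $n=1$ they are $\zeta+10+\zeta^{-1}$ and $\zeta-2+\zeta^{-1}$.

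What is true, and what suffices, is the absence of \emph{homogeneous} relations.
\begin{itemize}
\item The $q^0$-terms of $c_1,\dots,c_n$ are obtained from $e_1,\dots,e_n$ by an invertible affine change. Hence the image of the torus under $(\phi^{(n)}_k|_{q^0})_k$ is Zariski dense in an affine hyperplane of $\mathbb{C}^{n+1}$ not passing through $0$. The cone over that hyperplane is dense in $\mathbb{C}^{n+1}$, so no nonzero homogeneous polynomial vanishes on the image.
\item A bihomogeneous relation of index $m$ has the form $\sum_{|\beta|=m}P_\beta(E_4,E_6)\,\phi^\beta=0$. Its $q^0$-part is therefore a homogeneous relation of degree $m$, which forces every $P_\beta$ to be a cusp form. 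Dividing by $\Delta$ and inducting on the weight gives algebraic independence of $E_4,E_6$ and the $\phi^{(n)}_k$.
\item Generation then follows from the bigraded dimension count against Theorem~\ref{theorem:wirthmueller} alone, since each $J^{\mathrm{weak},W}_{k,A_n(m)}$ is finite-dimensional.
\end{itemize}
Your ``subtract a polynomial with matching $q^0$-term and divide by $\Delta$'' induction is not needed. It could not be run anyway without already knowing that every such $q^0$-term is a polynomial in those of the $\phi$'s.

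\noindent Two smaller points.

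\emph{Weights.} Your weight bookkeeping does not close: $\eta^{3(n+1)}=\Delta^{(n+1)/8}$, not $\Delta^{(n+1)/4}$, and with weight $-\tfrac{5(n+1)}{2}$ for $\Theta$ the total is not $-k-1$. The $\Delta^{(n+1)/4}$ in the paper is a misprint: its own lemma uses $\Theta=\prod_i 2\pi i\,\vartheta(\tau,z_i)/\vartheta'(\tau,0)$ with $\vartheta'(\tau,0)=2\pi i\,\eta^3$, so $\Theta$ has weight $-(n+1)$. The columns of the Wronskians are $1,P,\dots,D^{n-1}P$; for $n=1$ one gets $\phi^{(1)}_0=12\,\phi^{(1)}_{-2}P$. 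So $W^{(n;k)}/W^{(n;n)}$ has weight $n-k$ for $k\ge1$ but $n+1$ for $k=0$, giving weights $-k-1$ and $0$ respectively. Also, $z$-derivatives of the index-$0$ form $\wp$ transform with no correction terms.

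\emph{Frobenius--Stickelberger.} Your aside misreads $W^{(n;n)}$. It is an $n\times n$ determinant in $z_1,\dots,z_n$ only, so its theta factor is $\vartheta(z_1+\dots+z_n)=-\vartheta(z_{n+1})$, which does not vanish on $A_n\otimes\mathbb{C}$. If $W^{(n;n)}$ vanished identically, the $\phi^{(n)}_k$ would be undefined. The $(n+1)$-point determinant you wrote down is the one that vanishes there, which is exactly the statement that $G$ exists. Since you switch to $G$ anyway, this is not load-bearing.
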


There are natural embeddings $$\iota_j : A_{n-1} \rightarrow A_n, \quad (z_1,...,z_n) \mapsto (z_1,...,z_{j-1},0,z_j,...,z_n),$$ all of which induce the same pullback map on Weyl-invariant (weak) Jacobi forms, $$\iota^* = \iota_j^* : J_{k, A_n(m)}^{\mathrm{weak}, W(A_n)} \longrightarrow J_{k, A_{n-1}(m)}^{\mathrm{weak}, W(A_{n-1})}.$$ We have the following:

\begin{lemma} $\iota^* \phi_{-(n+1)}^{(n)} = 0$ and $\iota^* \phi_k^{(n)} = \phi_k^{(n-1)}$ for any $k > -(n+1)$.
\end{lemma}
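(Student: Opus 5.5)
The plan is to compute the pullback directly from Bertola's formula by letting one coordinate tend to zero. By Weyl invariance every $\iota_j$ induces the same map, so I will use $\iota_{n+1}$, i.e.\ set $z_{n+1}=0$, where $z=(z_1,\dots,z_n,z_{n+1})$ with $\sum z_i=0$. Since the pullback of a holomorphic weak Jacobi form is holomorphic in $(\tau,z_1,\dots,z_n)$, it suffices to compute the limit of $\phi^{(n)}_k(\tau,z_1,\dots,z_n,\epsilon)$ as $\epsilon\to 0$, holding $\tau$ and generic $z_1,\dots,z_n$ fixed. Strictly speaking, this limit is taken on the slice $z_1+\dots+z_n=-\epsilon$, which tends to the $A_{n-1}$ slice $z_1+\dots+z_n=0$. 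I need the following Laurent expansions at $\epsilon=0$:
\[
D^jP(\tau,\epsilon)=(-1)^j (j+1)!\,(2\pi i\epsilon)^{-j-2}+O\bigl(\epsilon^{-j}\bigr),
\qquad
\vartheta(\tau,\epsilon)=2\pi i\epsilon\,\eta(\tau)^3+O(\epsilon^3).
\]
Both follow from the Fourier expansion of $P$ and from the product formula for $\vartheta$. They give
\[
\Theta(\tau,z_1,\dots,z_n,\epsilon)=2\pi i\epsilon\cdot\frac{\eta^3}{\Delta^{1/4}}\cdot\frac{\prod_{i\le n}\vartheta(\tau,z_i)}{\Delta^{n/4}}+O(\epsilon^2),
\]
and since $\eta^3/\Delta^{1/4}=1$, the last factor is exactly the $A_{n-1}$ theta block.

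Next I expand each Wronskian $W^{(n;k)}$ along the row belonging to $z_{n+1}=\epsilon$. The entries in that row are $1,P(\epsilon),\dots,D^nP(\epsilon)$ with the $D^k$ column removed, and their pole orders strictly increase with the derivative order. So the dominant term is the entry in the highest surviving derivative column times its cofactor, and all other terms have pole order lower by at least one.

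\emph{Case $k<n$.} The column $D^nP$ survives. Its cofactor, evaluated at $(z_1,\dots,z_n)$, is precisely the $A_{n-1}$ Wronskian $W^{(n-1;k)}$, namely the determinant with columns $1,P,\dots,D^{n-1}P$ and $D^k$ omitted.

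\emph{Case $k=n$.} The top surviving column is $D^{n-1}P$, with cofactor $W^{(n-1;n-1)}$.

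The two cofactors carry the same sign, since they come from the same last-row, last-column position. Hence for $k<n$,
\[
\frac{W^{(n;k)}}{W^{(n;n)}}=\frac{D^nP(\epsilon)}{D^{n-1}P(\epsilon)}\cdot\frac{W^{(n-1;k)}}{W^{(n-1;n-1)}}\bigl(1+O(\epsilon)\bigr)
=\frac{-(n+1)}{2\pi i\epsilon}\cdot\frac{W^{(n-1;k)}}{W^{(n-1;n-1)}}+O(1).
\]
This uses that $W^{(n-1;n-1)}(z_1,\dots,z_n)\not\equiv 0$, which holds because the $1,P,\dots,D^{n-1}P$ are linearly independent functions of $z$. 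Multiplying by $\Theta=O(\epsilon)$, the $\epsilon^{-1}$ pole cancels against the zero of $\vartheta(\epsilon)$, and the limit is a nonzero constant multiple of $\Theta^{(n-1)}\,W^{(n-1;k)}/W^{(n-1;n-1)}$. For $k=n$, the ratio is identically $1$, so $\phi^{(n)}_{-(n+1)}$ is a constant times $\Theta$. It therefore vanishes at $\epsilon=0$, because $\vartheta(\tau,0)=0$. This proves $\iota^*\phi^{(n)}_{-(n+1)}=0$ immediately.

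The main obstacle is the normalizing constant. The naive leading-term computation yields the factor $\tfrac{(-1)^n}{n!}\cdot(-(n+1))$, whereas the target has $\tfrac{(-1)^{n-1}}{(n-1)!}$. So I must track carefully the row and column sizes of the Wronskians (there are $n+1$ rows in the $A_n$ case), the sign of the cofactor, and whether the relation $\sum z_i=0$ shifts which minor appears. If the bookkeeping stays unclear, the fallback is to compare a single Fourier coefficient. Both $\iota^*\phi^{(n)}_k$ and $\phi^{(n-1)}_k$ lie in the same space of $W(A_{n-1})$-invariant weak forms, and the computation above already shows they are proportional. It then suffices to compare their $q^0$-terms, which are explicit Weyl-invariant Laurent polynomials in $\zeta_i$ obtained from the $q^0$ part of $P$ and of $\vartheta$. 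Evaluating both at a convenient point, for example via the constant term in $\zeta$, fixes the constant to be $1$. The same argument covers $\phi^{(n)}_0$, whose extra factor $12$ is common to both levels.
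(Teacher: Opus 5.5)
Your method is the paper's: Laplace-expand the Wronskians along the row of the coordinate sent to $0$, keep the top Laurent term of $D^jP$, and let the simple zero of $\vartheta$ cancel the resulting simple pole. But the lemma is an exact identity, and your proposal leaves the constant open. The fallback only asserts that a Fourier-coefficient comparison gives $1$, and it cannot: if your leading-order computation were valid it would give $\iota^*\phi^{(n)}_k=\tfrac{n+1}{n}\phi^{(n-1)}_k$, so the comparison would return $\tfrac{n+1}{n}$.

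The discrepancy comes from reading $W^{(n;k)}$ as an $(n+1)\times(n+1)$ determinant in all of $z_1,\dots,z_{n+1}$ with columns $1,P,\dots,D^nP$. Under that reading $W^{(n;n)}$ vanishes identically on $\sum_i z_i=0$. The function $u\mapsto\prod_{i=1}^{n+1}\vartheta(u-z_i)/\vartheta(u)^{n+1}$ is elliptic precisely because $\sum_i z_i=0$, lies in the span of $1,P,\dots,D^{n-1}P$, and vanishes at every $z_i$, so those columns are dependent. This reading also gives $\Theta W^{(n;0)}/W^{(n;n)}$ weight $-1$, not $0$.

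The same argument shows your cofactor $W^{(n-1;n-1)}(z_1,\dots,z_n)$ vanishes on $\sum_{i\le n}z_i=0$. Linear independence of $1,P,\dots$ as functions does not prevent this. So the cofactor is $O(\epsilon)$ on the slice where you take the limit, and the terms you discarded are of the same order.

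Also, $\eta^3/\Delta^{1/4}=\eta^{-3}\neq1$. The theta block must be normalized by $\eta^{3(n+1)}=\Delta^{(n+1)/8}$ for $\phi^{(n)}_{-(n+1)}$ to have weight $-(n+1)$. This is what the paper's proof actually uses, via $2\pi i\vartheta(\tau,z_i)/\vartheta'(\tau,0)$.

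The consistent reading is the only one giving weights $0$ and $-k-1$ and a nonzero denominator. In it, $W^{(n;k)}$ is the $n\times n$ determinant with rows $z_1,\dots,z_n$ and columns $1,P,DP,\dots,D^{n-1}P$ with the $(k+1)$-st removed. The coordinate $z_{n+1}=-\sum_{i\le n}z_i$ enters only through $\Theta$.

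Use your Weyl-invariance remark to take $\iota_n$, i.e.\ $z_n=\epsilon\to0$ with $z_1,\dots,z_{n-1}$ free. Do not use $\iota_{n+1}$, since $W^{(n;n)}$ vanishes where $z_{n+1}=0$. Then:
\begin{enumerate}
\item The top surviving columns are $D^{n-1}P$ for $k<n$ and $D^{n-2}P$ for $k=n$, both in position $(n,n)$.
\item Their cofactors are exactly $W^{(n-1;k)}$ and $W^{(n-1;n-1)}$ in the free variables $z_1,\dots,z_{n-1}$, and the latter is generically nonzero.
\item The ratio of leading terms is $-n/(2\pi i\epsilon)$.
\item $\Theta^{(n)}=2\pi i\epsilon\,\Theta^{(n-1)}+O(\epsilon^2)$, with $\Theta^{(n-1)}$ formed from $z_1,\dots,z_{n-1},z_{n+1}$.
\end{enumerate}
The prefactor therefore becomes $\tfrac{(-1)^n}{n!}\cdot(-n)=\tfrac{(-1)^{n-1}}{(n-1)!}$, exactly that of $\phi^{(n-1)}_k$; for $k=0$ the factor $12$ is common to both sides. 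For example, for $n=2$ one gets $\tfrac12\Theta^{(2)}\bigl(DP(z_2)-DP(z_1)\bigr)/\bigl(P(z_2)-P(z_1)\bigr)\to\vartheta(z_1)^2/\eta^6=\phi^{(1)}_{-2}$.

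The paper's proof is this $z_n\to0$ expansion. Its displayed leading coefficients are the $D^nP$ ones, and taken literally they reproduce your factor $-(n+1)$. The bookkeeping above is exactly the point that has to be made explicit.
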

\begin{proof} $\phi_{-(n+1)}^{(n)}$ is the theta block $$\phi_{-(n+1)}^{(n)} = \frac{(-1)^n}{n!} \frac{\vartheta(\tau, z_1) \times ... \times \vartheta(\tau, z_{n+1})}{\Delta^{(n+1)/4}},$$ and it vanishes when any $z_i$ is zero because $\vartheta(\tau, 0) = 0$. \\

Suppose $k > -(n+1)$. Since the Laurent expansion of $P(\tau, z)$ about $z=0$ begins $P(\tau, z) = \frac{1}{(2\pi i z)^2} + O(z^2)$, we have $$D^m P = \frac{(m + 1)! (-1)^m}{(2\pi i)^{m+2}} z^{-(m+2)} + O(z^0)$$ for every $m \ge 1$. If $k \ne -n$ we obtain, in a neighborhood of $z_n$, \begin{align*} &\quad \mathrm{det} \begin{pmatrix} 1 & P(z_1) & \cdots & \widehat{ D^kP(z_1)} & \cdots & D^n P(z_1) \\ \cdots & \cdots & \cdots & \cdots & \cdots & \cdots \\ 1 & P(z_n) & \cdots & \widehat{D^k P(z_n)} & \cdots & D^n P(z_n) \end{pmatrix} \\ &= \frac{(-1)^n (n+1)!}{(2\pi i)^{n+2} z_n^{n+1}} \mathrm{det} \begin{pmatrix} 1 & P(z_1) & \cdots & \widehat{D^k P(z_1)} & \cdots & D^{n-1} P(z_1) \\ \cdots & \cdots & \cdots & \cdots & \cdots & \cdots \\ 1 & P(z_{n-1}) & \cdots & \widehat{D^k P(z_{n-1})} & \cdots & D^{n-1} P(z_{n-1}) \end{pmatrix} + O(z_n^{-n}).\end{align*} Hence the Laurent series of $W^{(n; -k)}$ about $z_n=0$ begins $$W^{(n; k)} = \frac{(-1)^n (n+1)!}{(2\pi i)^{n+2}} W^{(n-1; -k)} z_n^{-(n+1)} + O(z_n^{-n}).$$ By a similar computation we find $$W^{(n; n)} = \frac{(-1)^{n-1} n!}{(2\pi i)^{n+1}} W^{(n-1; n-1)} z_n^{-n} + O(z_n^{-n+1}).$$ Finally, since $$\Theta^{(n)}(\tau, z) = \prod_{i=1}^{n+1} \frac{2\pi i \vartheta(\tau, z_i)}{\vartheta'(\tau, 0)} = 2\pi i \Theta^{(n-1)} z_n + O(z_n^2),$$ we obtain \[ \frac{(-1)^n}{n!} \Theta^{(n)} \cdot \frac{W^{(n;k)}}{W^{(n;n)}} = \frac{(-1)^{n+1}}{(n+1)!} \Theta^{(n-1)} \cdot \frac{W^{(n-1; k)}}{W^{(n-1; n-1)}} + O(z_n)\] and therefore \[ \iota^* \Big(\frac{(-1)^n}{n!} \Theta^{(n)} \cdot \frac{W^{(n;k)}}{W^{(n;n)}} \Big) = \frac{(-1)^{n+1}}{(n+1)!} \Theta^{(n-1)} \cdot \frac{W^{(n-1; k)}}{W^{(n-1; n-1)}}. \qedhere \]

There seems to be no canonical set of Wirthm\"uller generators for the $D_n$ root system in the literature. For $n \le 8$, a system of weak Jacobi forms that are characterized by a special differential equation was proposed by Adler--Gritsenko \cite{AG2020}.

\subsection{Orthogonal modular forms}

Let $T$ be an even integral lattice of signature $(2, n)$ for some $n \ge 1$ and $T_{\mathbb{C}} = T \otimes \mathbb{C}$. Let $$\mathcal{D}^{\pm} = \{[Z] \in \mathbb{P}(T_{\mathbb{C}}): \; Z \cdot Z = 0, \; Z \cdot \overline{Z} > 0\}$$ and fix either of two of its connected components, denoted by $\mathcal{D}$. Then $\mathcal{D}$ is a Hermitian symmetric domain of type IV. The group $$\mathrm{O}^+(T) = \{\gamma \in \mathrm{O}(T): \; \gamma \mathcal{D} = \mathcal{D}\}$$ acts properly discontinuously on $\mathcal{D}$ and the quotient $\mathrm{O}^+(T) \backslash \mathcal{D}$ is called the orthogonal modular variety attached to $T$. \\
Let $\mathcal{A} = \{Z \in T_{\mathbb{C}} \backslash \{0\}: \; [Z] \in \mathcal{D}\}$ be the affine cone over $\mathcal{D}$. For a finite-index subgroup $\Gamma \le \mathrm{O}^+(T)$, a modular form of weight $k \in \mathbb{N}_0$ and level $\Gamma$ is defined as a holomorphic function $$f : \mathcal{A} \longrightarrow \mathbb{C}$$ satisfying $$f(\gamma Z) = f(Z), \; \gamma \in \Gamma; \quad f(t Z) = t^{-k} f(Z), \; t \in \mathbb{C}^{\times},$$ as well as a holomorphy condition at cusps of codimension one if $n \in \{1, 2\}$ (which is never the case in this paper). The most important modular group $\Gamma$ will be the discriminant kernel $$\mathrm{\tilde O}(T) = \{\gamma \in \mathrm{O}^+(T): \; \gamma x \in x + T \; \text{for all} \; x \in T^\vee\},$$ where $T^\vee$ is the dual lattice $$T^\vee = \{y \in T \otimes \mathbb{Q}: \; y \cdot T \subseteq \mathbb{Z}\}.$$ The space of modular forms of weight $k$ is denoted $M_k(\Gamma)$. \\

Now suppose that $T = H^{\oplus 2} \oplus L(-1)$ for a positive-definite lattice $L$, and write vectors $x \in T$ as $(x_1, x_2, x_3, x_4, y)$ with $(x_1,x_2), (x_3, x_4) \in H_{\mathbb{C}}$ and $y \in L_{\mathbb{C}}$, such that $$x^2 = x_1 x_2 + x_3 x_4 - y^2.$$ Then it is well-known that modular forms $f$ of level $\mathrm{\tilde O}(T)$ are uniquely determined by their Fourier--Jacobi expansions: $$f(z^2 - \tau w, 1, \tau, w, z) = \sum_{n=0}^{\infty} \phi_n(\tau, z) e^{2\pi i n w},$$ in which each $\phi_n$ is a Jacobi form of index $L(n)$. \\

For a holomorphic Jacobi form $$\phi = \sum_{n=0}^{\infty} \sum_{\ell \in L^{\vee}} c(n, \ell) q^n \zeta^{\ell} \in J_{k, L},$$ the \emph{Gritsenko lift} or additive Borcherds lift is a modular form $\mathrm{Lift}(\phi) \in M_k(\mathrm{\tilde O}(T))$, where $T = H \oplus H \oplus L(-1)$, with Fourier--Jacobi expansion $$\mathrm{Lift}(\phi) = \sum_{m=0}^{\infty} \Big( \phi \Big| V_m \Big)(\tau, z) e^{2\pi i m w}.$$
For $m \ge 1$, $V_m$ is the index-raising Hecke operator $$\phi|V_m(\tau, z) = \frac{1}{m} \sum_{\substack{ad=m \\ a,d>0}} \sum_{b\in \mathbb{Z}/d} a^k \phi\left( \frac{a\tau + b}{d}, az \right) = \sum_{n=-\infty}^{\infty} \sum_{\ell \in L^{\vee}} \Big( \sum_{d | (n,\ell,m)} d^{k-1} c\left( \frac{mn}{d^2}, \frac{\ell}{d} \right) \Big) q^n \zeta^{\ell},$$ while if $k \ge 4$ is even then $V_0$ is formally defined by $$\phi \Big|V_0(\tau, z) = -c(0,0) \cdot \frac{B_k}{2k}E_k(\tau),$$ and $\phi|V_0 = 0$ otherwise.
In particular, the first Fourier--Jacobi coefficient of $\mathrm{Lift}(\phi)$ is $\phi$ itself. \\

The (multiplicative) Borcherds lift is an exponential analogue of the additive lift that maps weakly holomorphic Jacobi forms of weight zero with integral Fourier coefficients to orthogonal modular forms whose zeros and poles lie on Heegner divisors (or Noether--Lefschetz divisors in the context of moduli of K3 surfaces). For a vector $\ell \in T^\vee$ of negative norm, the hyperplane $\ell^{\perp} = \{[z] \in \mathcal{D}: \; z \cdot \ell = 0\}$ is a Hermitian symmetric domain of type $\mathrm{O}(2, n-1)$. For a given class $\gamma \in T^\vee/T$ and $n \in \mathbb{Z} + \gamma^2 / 2$, $n < 0$, the union $$H_{n, \gamma} = \bigcup_{\substack{\ell \in \gamma \\ \ell^2 = n}} \ell^{\perp}$$ is locally finite and $\mathrm{\tilde O}(T)$-invariant and therefore descends to a holomorphic divisor on $\mathrm{\tilde O}(T) \backslash \mathcal{D}$, called a Heegner divisor of discriminant $n$. 

The construction of Borcherds \cite{Borcherds1998} in the language of Jacobi forms following Gritsenko--Nikulin \cite{GN1998} is the following:

\begin{theorem} Let $\phi = \sum_{n \ge -N}^{\infty} \sum_{\ell \in L^{\vee}} c(n, \ell) q^n \zeta^{\ell}$ be a weakly holomorphic Jacobi form of weight zero and index $L$ with integral Fourier coefficients. Then there exists a meromorphic modular form $\Psi$ for $\mathrm{\tilde O}(T)$, $T = H^{\oplus 2} \oplus L(-1)$, with the following properties:
\begin{enumerate}
\item $\Psi$ has weight $\frac{c(0,0)}{2}$ and a character of finite order.
\item The zeros and poles of $\Psi$ lie on Heegner divisors. If $r$ is a vector of the form $(0,0)\oplus (n,1) \oplus \ell$ with respect to the splitting $H \oplus H \oplus L(-1)$, then the order of $\Psi$ along $r^{\perp}$ is $$\mathrm{ord}(\Psi; r^{\perp}) = \sum_{m=1}^{\infty} c(m^2 n, m \ell).$$
\item The Fourier--Jacobi expansion of $\Psi$ is the product $$\Psi = e^{2\pi i C w} \Theta(\tau, z) \times \exp \Big( \sum_{n=1}^{\infty} (\phi | V_n)(\tau, z) e^{2\pi i n w} \Big),$$ where $\Theta$ is the theta block \begin{align*} \Theta(\tau, z) &:= \eta(\tau)^{c(0,0)} \prod_{\ell > 0} \Big( \frac{\vartheta(\tau, \ell \cdot z)}{\eta(\tau)} \Big)^{c(0, \ell)} \\ &= q^{\frac{1}{24} \sum_{\ell \in L^{\vee}} c(0, \ell)} \prod_{\substack{\ell \in L^{\vee} \\ \ell > 0}} (\zeta^{\ell/2} - \zeta^{-\ell/2}) \prod_{\ell \in L^{\vee}} \prod_{n=1}^{\infty} (1 - q^n \zeta^{\ell})^{c(0, \ell)},\end{align*} and $C$ is the constant $$C = \frac{1}{2 \cdot \mathrm{rank}(L)} \sum_{\ell \in L^{\vee}} \ell^2 c(0, \ell).$$
\end{enumerate}
\end{theorem}

The graded ring $M_*(\Gamma) = \bigoplus_{k=0}^{\infty} M_k(\Gamma)$ of modular forms of all weights is known to be finitely-generated by the theorem of Baily--Borel. For certain modular groups that are related to root lattices, the ring structure is especially simple:

\begin{theorem}\label{thrm:freealg} \cite{WW20} Let $R$ be one of the root lattices $A_n$ $(1 \le n \le 7)$, $B_n$ $(2 \le n \le 4)$, $C_n$ $(3 \le n \le 8)$, $D_n$ $(3 \le n \le 8)$, $E_6, E_7, F_4$ or $G_2$. Let $L$ be the root lattice and let $T = H^{\oplus 2} \oplus L(-1)$, and let $\Gamma = \Gamma_R$ be the subgroup of $\mathrm{O}^+(T)$ generated by the discriminant kernel $\mathrm{\tilde O}(T)$ and by the Weyl group $W_R$ acting on $L$. Then: \\ (i) $M_*(\Gamma)$ is a free algebra. \\ (ii) If $\phi_{-k, m} \in J_{*, L(*)}^{\mathrm{w}, W}$ are generators of the Wirthm\"uller ring of weak Jacobi forms, then the generators of $M_*(\Gamma)$ can be chosen to have leading Fourier--Jacobi coefficients $\Delta^m \phi_{-k,m}.$ Conversely, any system of modular forms whose leading Fourier--Jacobi coefficients are $\Delta^m \phi_{-k,m}$ is a set of generators for $M_*(\Gamma)$.
\end{theorem}
In particular, $M_*(\Gamma)$ is generated in weights $4, 6$ and $12m-k$, where $(k,m)$ are the numbers in Theorem \ref{theorem:wirthmueller}.

%Free algebras of modular forms attached to root systems were studied in \cite{WW20}. Here, the root systems appear in the lattice $T$, or the transcendental lattice of the general K3 surface in the family, rather than the N\'eron-Severi lattice. For an irreducible root system $R$, let $L$ be its root lattice as given by the following table: 

The lattices $L$ associated to the irreducible root systems are the following ADE lattices:

\begin{table}[htpb]
\begin{tabular}{|l|l|l|l|l|l|l|l|}
\hline
$R$   & $A_m$ & $B_m$            & $C_m$ & $D_m$ & $E_m$ & $F_4$ & $G_2$ \\ \hline
&&&&&&& \\[-1em]
$L$ & $A_m$ & $A_1^{\oplus m}$ & $D_m$ & $D_m$ & $E_m$ & $D_4$ & $A_2$ \\ \hline
\end{tabular}
\end{table}

The exceptional root system $E_8$ does not fit into this pattern as its ring of weak Jacobi forms is known not to be freely generated \cite{Wang21}. Nevertheless, the algebra $M_*(\Gamma_{E_8})$ is free, as shown in \cite{HU22}. Conversely, these account for all free algebras of modular forms for lattices splitting two hyperbolic planes, as shown in \cite{Wang212}.

%The Weyl group $W(R)$ naturally acts on $L$. Let $T = H \oplus H \oplus L(-1)$ and let $\Gamma_R$ be the subgroup of $\mathrm{O}^+(T)$ generated by the discriminant kernel $\mathrm{\tilde O}(T)$ and by $W(R)$. (See section 3.3 below for the definitions.) For this group, the structure of the ring of modular forms is known. Let us recall following theorem (cf. \cite{WW20}, except for the exceptional case $R=E_8$ which is due to \cite{HU22}):

%\begin{theorem}[\cites{WW20,HU22}] \label{thrm:freealg}
%    For $R \in \{A_1,...,A_7,B_1,...,B_4,C_1,...,C_8,D_1,...,D_8,E_6,E_7,E_8,F_4,G_2\}$, the graded ring of modular forms $M_*(\Gamma_R)$ is freely generated. The weights of the generating modular forms are uniquely determined by $R$ and are given in Figure \ref{fig:freealg}.
\begin{figure}[htpb]
\begin{tabular}{|l|l|}
\hline
$R$   & Weights       \\ \hline \hline
$A_1$ & $4,6,10,12$   \\ \hline
$A_2$ & $4,6,9,10,12$ \\ \hline
$B_2$ & $4,6,8,10,12$ \\
\hline
$G_2$ & $4,6,10,12,18$ \\
\hline
$A_3$ & $4,6,8,9,10,12$ \\
\hline
$B_3$ & $4, 6, 6, 8, 10, 12$ \\
\hline
$C_3$ & $4, 6, 8, 10, 12, 18$ \\
\hline
$A_4$ & $4, 6, 7, 8, 9, 10, 12$ \\
\hline
$B_4$ & $4, 4, 6, 6, 8, 10, 12$ \\
\hline
$C_4$ & $4, 6, 8, 10, 12, 16, 18$ \\
\hline
$D_4$ & $4, 6, 8, 8, 10, 12, 18$ \\
\hline
$F_4$ & $4, 6, 10, 12, 16, 18, 24$ \\
\hline
$A_5$ & $4, 6, 6, 7, 8, 9, 10, 12$ \\
\hline
$C_5$ & $4, 6, 8, 10, 12, 14, 16, 18$ \\
\hline
$D_5$ & $4, 6, 7, 8, 10, 12, 16, 18$ \\
\hline
$A_6$ & $4, 5, 6, 6, 7, 8, 9, 10, 12$ \\
\hline
$C_6$ & $4, 6, 8, 10, 12, 12, 14, 16, 18$ \\
\hline
$D_6$ & $4, 6, 6, 8, 10, 12, 14, 16, 18$ \\
\hline
$E_6$ & $4, 6, 7, 10, 12, 15, 16, 18, 24$ \\
\hline
$A_7$ & $4, 4, 5, 6, 6, 7, 8, 9, 10, 12$ \\
\hline
$C_7$ & $4, 6, 8, 10, 10, 12, 12, 14, 16, 18$ \\
\hline
$D_7$ & $4, 5, 6, 8, 10, 12, 12, 14, 16, 18$ \\
\hline
$E_7$ & $4, 6, 10, 12, 14, 16, 18, 22, 24, 30$ \\
\hline
$C_8$ & $4, 6, 8, 8, 10, 10, 12, 12, 14, 16, 18$ \\
\hline
$D_8$ & $4, 4, 6, 8, 10, 10, 12, 12, 14, 16, 18$ \\
\hline
$E_8$ & $4, 10, 12, 16, 18, 22, 24, 28, 30, 36, 42$ \\
\hline
\end{tabular}
\caption{\label{fig:freealg} Irreducible root systems and free algebras of modular forms.}
\end{figure}

Many cases of Theorem \ref{thrm:freealg} were already known before the work of \cite{WW20}. In particular, the algebra structure in the case $R = D_n$ is due to Vinberg \cite{Vinberg2010}, \cite{Vinberg2018}.

\begin{corollary}\label{cor:An} Let $T = H^{\oplus 2} \oplus A_n(-1)$, $1 \le n \le 7$. Then $M_*(\mathrm{\tilde O}(T)) = \mathbb{C}[F_6, F_4, \chi_k]$, $k = 12, 10, 9, ..., 11-n$ with generators $$\chi_k = \mathrm{Lift}( \Delta \phi_{k-12}^{(n)})$$ and the Eisenstein-type generators defined as follows: if $n=7$, $$F_4 := 240\cdot \mathrm{Lift} \Big( (1 + 2q + 2q^4 + ...) (\vartheta_0 + \vartheta_{4\ell}) + (2q^{1/4} + 2q^{9/4} + ...) (\vartheta_{2\ell} + \vartheta_{4\ell}) \Big);$$ $$F_6 := -504\cdot \mathrm{Lift} \Big( (1 - 70q - 120q^2 - ...) (\vartheta_0 + \vartheta_{4\ell}) + (-10q^{1/4} - 48q^{5/4} - ...) (\vartheta_{2\ell} + \vartheta_{6\ell}) \Big),$$ and for $n < 7$, they are defined by the pullback $F_4 = \iota^* F_4$, $F_6 = \iota^* F_6$ with respect to the embedding $\iota : A_n \rightarrow A_7$.
\end{corollary}

In the definition of $F_4$ and $F_6$, $\vartheta_{m\ell}$ denotes the theta function $\vartheta_{m\ell + A_7}$ where $\ell = \frac{1}{n+1}(1,...,1, -n) = \frac{1}{8}(1,...,1,-7)$ is the fixed generator of $A_7^{\vee}/A_7$. The factors $240$ and $-504$ guarantee that the result of the Gritsenko lift has integral Fourier coefficients.
\end{proof}
\section{The \texorpdfstring{$A_n$}{An}-family of K3 surfaces}

Recall that $\ell$ is the fixed generator of the discriminant group $A_n^{\vee} / A_n$, represented by $\frac{1}{n+1} (1,...,1,-n)$ with respect to the convention that $A_n$ is the lattice of integral $(n+1)$-vectors that sum to zero.

\begin{theorem} \label{thm:weq_An}
For $1 \le n \le 7$, define families of Weierstrass equations as follows:

\begin{table}[H]
\caption{The $A_n$-family of K3 surfaces}
\begin{tabular}{|l|l|}
\hline
$n=1$ & $y^2 = x^3 + (u^3 + f_4 u + f_6) x^2 + (g_{10} u + g_{12}) x$ \\ \hline
$n=2$ & $y^2 = x^3 + (u^3 + f_4 u + f_6) x^2 + (g_{10} u + g_{12}) x + h_9^2$ \\ \hline
$n=3$ & $y^2 = x^3 + (u^3 + f_4 u + f_6) x^2 + (g_8 u^2 + g_{10} u + g_{12}) x + h_9^2$ \\ \hline
$n=4$ & $y^2 = x^3 + (u^3 + f_4 u + f_6) x^2 + (g_8 u^2 + g_{10} u + g_{12}) x + (h_7 u + h_9)^2$  \\ \hline
$n=5$ & $y^2 = x^3 + (u^3 + f_4 u + f_6) x^2 + (g_6 u^3 + g_8 u^2 + g_{10} u + g_{12}) x + (h_7 u + h_9)^2$ \\ \hline
$n=6$ & $y^2 = x^3 + (u^3 + f_4 u + f_6) x^2 + (g_6 u^3 + g_8 u^2 + g_{10} u + g_{12}) x + (h_5u^2 + h_7 u + h_9)^2$  \\ \hline
$n=7$ & $y^2 = x^3 + (u^3 + f_4 u + f_6) x^2 + (g_4 u^4 + g_6 u^3 + g_8 u^2 + g_{10} u + g_{12}) x + (h_5 u^2 + h_7 u + h_9)^2$  \\ \hline
\end{tabular}
\label{tab:An}
\end{table}
\par For a very general choice of parameters, the equation above describes a Jacobian K3 surface $\mathcal{X}$ of multiplicity one whose transcendental lattice is $T = H^{\oplus  2} \oplus A_n(-1)$. The coefficients $f_k, g_k, h_k$ define algebraically independent modular forms for the discriminant kernel $\mathrm{\tilde O}(T)$ of the weight indicated by their subscript, and they freely generate the graded algebra $M_*(\mathrm{\tilde O}(T))$.

\end{theorem}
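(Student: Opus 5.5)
The proof has two parts: a geometric part identifying the lattices, and a modular part identifying the coefficients with the generators of Corollary~\ref{cor:An}.

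\textbf{Geometry.} For each $n$ we compute the singular fibres of the Weierstrass equations in Table~\ref{tab:An}.

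At $u=\infty$ the fibre is still of type $I_{8-?}^*$ in the $D$-series. Its type is governed by the vanishing orders in the local coordinate $s=1/u$, exactly as in the proof of Proposition~\ref{prop:WEQ_rank10}. For $n=7$, the coefficients $g_4u^4$ and $h_5u^2$ push this fibre back to $I_4^*$.

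The cubic term $h(u)^2$ gives the section $(x,y)=(0,\pm h(u))$. This section has infinite order, or is $2$-torsion when $h=0$.

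Additional reducible fibres come from $A_1$ or $A_2$ configurations. For example, when $h_9=0$ and $g_{12}\neq 0$ there is an $I_2$ fibre; in general we read the fibres off the discriminant.

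We then compute the Néron--Severi lattice as in Remark~\ref{rem:15}. It is generated by $H$, the fibre components, and the section, and we check its rank, $18-n$, and its discriminant form, which must be $\cong -q_{A_n}$, i.e.\ cyclic of order $n+1$. Nikulin's uniqueness then gives $T\cong H^{\oplus2}\oplus A_n(-1)$.

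Next we check that the numbers of free parameters match. There are $n+3$ coefficients after the rescaling $\lambda$, which equals $\dim \mathscr{D}=20-\rho$ plus one. Multiplicity one follows from~\eqref{eqn:mult}, since $\mathrm{O}(Q_K)$ for a cyclic discriminant is $\{\pm1\}$ (or slightly larger) and is realized by $\mathrm{O}(K)$. Together with the normalisations of Proposition~\ref{prop:WEQ_rank10}, this shows that the coefficients are unique up to weighted rescaling. Hence the map from parameters to the period domain is generically finite and dominant onto $\tilde{\mathrm O}(T)\backslash\mathscr D$, or onto its image under $\mathrm O^+(T)/\tilde{\mathrm O}(T)=\{\pm1\}$. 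The residual $\pm$ ambiguity, $h\mapsto -h$, accounts for the odd-weight forms $h_k$ being modular only for $\tilde{\mathrm O}(T)$.

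\textbf{Modularity.} Through the inverse of the period map, each coefficient becomes a meromorphic function on the cone $\mathcal A$, homogeneous of the weight equal to its subscript. To see this, note that scaling the period $Z\mapsto tZ$ corresponds to scaling the holomorphic $2$-form $dx\wedge du/y$, which by the weights~\eqref{eqn:weights} scales by $\lambda^{-1}$.

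Holomorphy follows because the only loci where the model degenerates are the analogues of $\mathcal C_1$ and $\mathcal C_2$. These lie at the boundary (Type II degenerations) and are not interior divisors. So a Koecher-type argument makes the coefficients holomorphic, and they are modular forms for $\tilde{\mathrm O}(T)$.

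\textbf{Generation.} We compare weights with Figure~\ref{fig:freealg}. For $A_7$ the weights are $4,4,5,6,6,7,8,9,10,12$, which matches $f_4,g_4,h_5,f_6,g_6,h_7,g_8,h_9,g_{10},g_{12}$. Both rings are polynomial rings on generators of the same weights. Since the coefficients are algebraically independent (the map to the moduli space is generically finite) and the ring $M_*$ has the same Hilbert series, it suffices to show that the coefficients generate in each weight. By Theorem~\ref{thrm:freealg}(ii), this reduces to checking that their leading Fourier--Jacobi coefficients are, up to triangular change, $\Delta\phi^{(n)}_{k-12}$ together with $E_4,E_6$.

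To compute these leading coefficients, we degenerate to the Type II cusp (the Fourier--Jacobi limit $w\to i\infty$). There the surface becomes a union of rational elliptic surfaces glued along the elliptic curve $\mathbb C/(\mathbb Z+\tau\mathbb Z)$, with markings given by the points $z_i$. The sections of the limiting fibration are exactly the points $z_1,\dots,z_{n+1}$, and the limiting coefficients become symmetric functions of $\wp(z_i)$ and $\wp'(z_i)$. These are precisely Bertola's Wronskian quotients $\phi^{(n)}_k$.

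We carry out $n=7$ and then obtain the smaller $n$ by the pullback $\iota^*$ and the Lemma above. Setting the top new coefficient to zero corresponds to $\iota^*\phi^{(n)}_{-(n+1)}=0$, which matches the chain in Table~\ref{tab:An}: $g_4$, $h_5$, $g_6$, and so on.

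\textbf{Main obstacle.} The hardest step is this explicit identification of the first Fourier--Jacobi coefficients with the Wirthmüller generators. I would first try to verify it via matching the zero loci. Reflective divisors where the fibration acquires an extra $I_2$ correspond to the vanishing of theta blocks: for instance $h_{n+?}$, the top coefficient, should vanish on the mirrors $z_i=z_j$ and the discriminant divisor. This would pin down Borcherds-product expressions and fix the leading terms. If that fails, I would fall back to a direct $q$-expansion computation at low order.
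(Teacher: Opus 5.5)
\textbf{The generation step is a genuine gap; it is also unnecessary.}

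You reduce generation, via Theorem~\ref{thrm:freealg}(ii), to identifying leading Fourier--Jacobi coefficients with $\Delta\phi^{(n)}_{k-12}$ through a degeneration to the Type II cusp. The central claim is that the limiting coefficients are symmetric functions of $\wp(z_i),\wp'(z_i)$ and ``precisely Bertola's Wronskian quotients''. This is asserted, not derived, and it cannot hold as stated. Such symmetric functions have index $0$, whereas the targets $\Delta\phi^{(n)}_{k-12}$ are holomorphic of index $A_n$; that is exactly why Bertola's generators carry the theta block $\Theta$ in front of the Wronskian quotient.

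Your fallback also puts the zeros in the wrong place. The newly added top coefficient ($h_9,g_8,h_7,g_6,h_5,g_4$ for $n=2,\dots,7$) vanishes on $z_i=0$, the zero set of $\prod\vartheta(\tau,z_i)$. This is the Heegner divisor of discriminant $-\frac{n}{2(n+1)}$, where $T$ drops to $H^{\oplus 2}\oplus A_{n-1}(-1)$. It does not vanish on the root mirrors $z_i=z_j$.

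You can drop this step entirely. You already have, as you note, that the coefficients are algebraically independent modular forms for $\mathrm{\tilde O}(T)$. Their weights are exactly those of the free generators in Corollary~\ref{cor:An}; here $\Gamma_{A_n}=\mathrm{\tilde O}(T)$ because $W(A_n)$ acts trivially on $A_n^\vee/A_n$. So $\mathbb{C}[f_k,g_k,h_k]\subseteq M_*(\mathrm{\tilde O}(T))$ is a polynomial subring with the same Hilbert series $\prod_i(1-t^{k_i})^{-1}$, and the two rings agree in every degree. Your sentence ``it suffices to show that the coefficients generate'' is therefore circular: generation already follows at that point.

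The paper's proof likewise never touches Fourier--Jacobi coefficients here. It concludes modularity and free generation from the lattice computation, multiplicity one, and the identification of the parameter space with $\mathrm{\tilde O}(T)\backslash\mathscr D$. The explicit identification with Gritsenko lifts is a separate result, in the proof of Theorem~\ref{thm:main}\,\ref{thm:partII}. It is done inductively from $n=1$, using Koecher's principle on the Heegner divisor cut out by the new top coefficient and comparison with Borcherds products, not by a cusp degeneration.

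\textbf{The geometric and lattice part follows the paper, but needs its concrete inputs.} The overall structure matches: the trivial lattice $H\oplus D_{15-n}(-1)$, the section $\sigma'\colon (x,y)=(0,\pm h(u))$ coming from the square constant term (not a cubic term), Nikulin's criterion, and multiplicity one because $\mathrm{O}(q_{K_n})=\{\pm 1\}$ is realized by $\pm\mathrm{id}$. To turn it into an actual computation you need the following.
\begin{enumerate}
\item The fibre at $u=\infty$ is $I^*_{11-n}$, not ``$I^*_{8-?}$''.
\item For very general parameters and $n\ge 2$, the other singular fibres are $n+7$ fibres of type $I_1$; there are no extra $A_1$ or $A_2$ fibres.
\item The $x$-shift is fixed by requiring the constant term to be a perfect square, not by the normalization of Proposition~\ref{prop:WEQ_rank10}.
\item The discriminant $n+1$ comes from $\langle\sigma',\sigma'\rangle=4-\frac{15-n}{4}=\frac{n+1}{4}$, because $\sigma'$ meets the far leaf of $\widetilde D_{15-n}$.
\end{enumerate}
Koecher's principle only controls behaviour at the cusps. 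Interior holomorphy requires checking that the degeneracy loci of the $A_n$ models lie over the Baily--Borel boundary, which you assert only by analogy with $\mathcal C_1,\mathcal C_2$; the paper is equally terse on this point.
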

\begin{proof}
For $n \ge 2$, the elliptic fibrations of Table \ref{tab:An} generically have a singular fiber at $u=\infty$ of type $I_{11-n}^*$ and $(n+7)$ singular fibers of type $I_1$. The Weierstrass models are obtained by moving the reducible singular fiber to $u=\infty$, then normalizing the coefficient of $u^3x^2$ to one, and eliminating the subleading term $u^2x^2$. Overall shifts in $x$ are then constrained by requiring that the constant term is a perfect square. Rescaling $(u, x, y) \mapsto (\lambda^2 u, \lambda^6 x, \lambda^9y)$ for $\lambda \in \mathbb{C}^\times$ constitutes a holomorphic isomorphism, and the coefficients have the weights indicated by their subscript. 

Accordingly, for $n \ge 2$ the elliptic fibrations yield the trivial lattice $H \oplus D_{15-n}(-1)$. The N\'eron--Severi lattice is the extension of this lattice by the infinite-order section $\sigma'$ with $x=0$. The section $\sigma'$ does not intersect the neutral component of the reducible fiber of type $D_{15-n}$. Moreover, $\sigma'$ does not intersect the zero section $\sigma$  whence $\sigma \circ \sigma'=0$.  Thus, its height pairing is
\begin{equation}
\begin{aligned}
  \langle \sigma', \sigma' \rangle & = 4 - C^{-1}_{D_{15-n}}(\sigma', \sigma'),
\end{aligned}
\end{equation}
where $C^{-1}_{D_{15-n}}$ denotes the inverse Cartan matrix. A section can only intersect the reducible fiber in a terminal node, connected to one of the two forks in the extended Dynkin diagram $\widetilde{D}_{15-n}$. In particular, $\sigma'$ intersects a node connected with the fork opposite to one containing the neutral component.  The reason is that the $y$-coordinate of the section $\sigma'$ vanishes at $u=\infty$ with degree $4$, i.e.~it runs through the singularity of the reduced Weierstrass model and still does so after undoing a quadratic twist. Thus, the height pairing is
\begin{equation}
\begin{aligned}
      \langle \sigma', \sigma' \rangle & = 4 - \frac{15-n}{4} = \frac{n+1}{4}.
\end{aligned}
\end{equation}
The polarizing lattice is generated by
\[
\mathrm{NS}_n := \mathrm{NS}(\mathcal{X}) \ \cong \Big\langle \sigma + f, f , \ d_1, d_2, \dots, d_{15-n}, \ \sigma' - \sigma - 2f \Big \rangle,
\]
where $d_1, \dots, d_{15-n}$ are the nodes of the $D_{15-n}$-graph, with $d_1 \circ \sigma'=1$ and $d_i \circ \sigma'=0$ for $i=2, \dots, 15-n$. The lattice has rank $18-n$ and discriminant $n+1$. For the choice of generators the lattice decomposes in the form $H \oplus K_n(-1)$ where $K_n$ is a positive definite lattice of rank $16-n$.

For each integer $n = 1, \dots, 7$, we have constructed the N\'eron--Severi lattice $\mathrm{NS}_n$ from the Gram matrices. Let $T_n = H \oplus H \oplus A_n(-1)$ be the candidate transcendental lattice. We then verified in each case that $\mathrm{NS}_n$ and $T_n$ satisfy the conditions of Nikulin's theorem on primitive orthogonal complements in the K3 lattice $\Lambda_{\mathrm{K3}}$.  Explicitly, the sum of their ranks equals $22$, and the sum of their signatures equals $(3,19)$, matching the signature of $\Lambda_{\mathrm{K3}}$. Moreover, their discriminant groups are easily shown to be isomorphic, with identical invariants.

It remains to compare the discriminant quadratic forms. In each case, the discriminant group is cyclic of order $d = n+1$, so the quadratic form is determined by a value $q(x) = \frac{a}{d} \in \mathbb{Q}/2\mathbb{Z}$ on a generator. Two such forms are isomorphic if and only if they differ by multiplication by a square in $(\mathbb{Z}/d\mathbb{Z})^\times$. Thus, Nikulin’s condition requires that there exists a unit $k \in (\mathbb{Z}/d\mathbb{Z})^\times$ such that
\[
q_{\, \mathrm{NS}_n}(x) + q_{\, T_n}(k x) \equiv 0 \pmod{2\mathbb{Z}},
\]
or equivalently,
\[
\frac{a}{d} + \frac{b k^2}{d} \equiv 0 \pmod{2}.
\]
A direct check shows that such a unit exists for each $n = 1, \dots, 7$. For $n = 1,2,3,5$, one may take $k=1$, while for $n=4,6$ and $n=7$ one may take $k=2$ and $k=3$, respectively. Hence the discriminant forms satisfy $q_{\, \mathrm{NS}_n} \cong - q_{\, T_n}$, and all conditions of Nikulin’s theorem are fulfilled.

By construction, the N\'eron--Severi lattice decomposes as $\mathrm{NS}_n \cong H \oplus K_n(-1)$. Since $H$ is unimodular, the discriminant form of $\mathrm{NS}_n$ coincides with that of $K_n$. We consider the index
\[
\big[ \mathrm{O}(q_{K_n}) : \operatorname{Im}{\mathrm{O}(K_n)} \big].
\]
The quantity measures the multiplicity of elliptic fibrations with fixed frame. A direct computation shows that the multiplicity equals one for $n = 1, \dots, 7$.

Finally, the fact that the multiplicity equals one implies that, for very general parameters, the associated Jacobian elliptic fibration is uniquely determined by its frame, up to automorphisms of the surface. In particular, there are no additional identifications arising from nontrivial automorphisms of the discriminant form. It follows that the parameter space of the Weierstrass models coincides with the moduli space associated with the discriminant kernel $\mathrm{\tilde O}(T_n)$. Consequently, the coefficients $f_k$, $g_k$, and $h_k$ define modular forms for $\mathrm{\tilde O}(T_n)$ of the indicated weights, and they freely generate the graded algebra $M_*(\mathrm{\tilde O}(T_n))$.
\end{proof}

\begin{proof}[\textbf{Proof of Theorem~\ref{thm:main}\,\ref{thm:partII}}]
Theorem~\ref{thm:weq_An} has established the Weierstrass normal form for the general $S$-polarized K3 surface with $T = S^{\perp} = H \oplus H \oplus A_n(-1)$. We now construct the coefficients $f_k, g_k, h_k$ as modular forms with respect to the orthogonal group $\mathrm{\tilde O}(T)$ explicitly. To do so we start with the case $n=1$, and then construct iteratively the cases for higher $n$ until $n=7$ is reached.
\subsection{\texorpdfstring{$n=1$}{n equals 1}} 
The discriminant kernel $\mathrm{\tilde O}(T)$ for $T=H^{\oplus 2} \oplus A_1(-1)$ is closely related to $\mathrm{Sp}_4(\mathbb{Z})$ via the exceptional isogeny from $\mathrm{Sp}_4$ to $\mathrm{SO}(2,3)$. This allows modular forms for $\mathrm{\tilde O}(T)$ to be identified with the ring of Siegel modular forms of degree two and even weight.

The full ring of Siegel modular forms of degree two was calculated by Igusa \cite{Igusa62}; it decomposes as $$\mathbb{C}[E_4, E_6, \chi_{10}, \chi_{12}] \oplus \chi_{35} \mathbb{C}[E_4, E_6, \chi_{10}, \chi_{12}]$$ where $\chi_{35}$ satisfies a quadratic relation of the form $$\chi_{35}^2 = \chi_{10} \cdot (\text{irreducible expression of weight} \; 60).$$ The functions $E_4$ and $E_6$ are Siegel Eisenstein series, coinciding with the forms $F_4$ and $F_6$ introduced in \ref{cor:An}, and the forms $\chi_{10}$ and $\chi_{12}$ equal the standard Igusa cusp forms up to a normalization factor that guarantees that their Fourier coefficients are coprime integers. \\

Taking the discriminant with respect to $x$ in the defining Weierstrass equation $$y^2 = x^3 + (u^3 + f_4 u + f_6) x^2 + (g_{10} u + g_{12}) x$$ yields $$\mathrm{Disc}_x \left( x^3 + (u^3 + f_4 u + f_6) x^2 + (g_{10} u + g_{12}) x \right) = (g_{10} u + g_{12})^2 \cdot P_6(u)$$ with an irreducible polynomial $P_6$, $\mathrm{deg}(P_6) = 6$ that describes the $I_1$ fibers at $u = \beta_i$, $i=1,...,6$. We have $$D := \mathrm{Disc}_u \big(P_6\big) = \Big( \prod_{i < j} (\beta_i - \beta_j) \Big)^2$$ and $\mathrm{deg}(D) = 60$. Hence $D$ is the square of a Siegel modular form of weight $30$ with a quadratic character. This is already sufficient to determine $$D = \mathrm{const} \cdot \frac{\chi_{35}^2}{\chi_{10}}.$$
Since the coefficients $f_4, f_6, g_{10}, g_{12}$ belong to spaces of Siegel modular forms of dimensions $1, 1, 2, 3$, comparing a few Fourier coefficients between $D$ and $\chi_{35}^2 / \chi_{10}$ is enough to determine the values $$f_4 = -3 E_4 = -3 F_4, \quad f_6 = 2 E_6 = 2 F_6$$ and $$g_{10} = -12^5 \cdot \chi_{10}, \quad g_{12} = 12^5 \cdot \chi_{12},$$ and therefore the universal Weierstrass surface $$y^2 = x^3 + (u^3 - 3 F_4 u + 2 F_6) x^2 + 12^5 (-\chi_{10} u + \chi_{12}) x.$$
\subsection{\texorpdfstring{$n=2$}{n equals 2}} 
The coefficients of the Weierstrass equation $$y^2 = x^3 + (u^3 + f_4 u + f_6) x^2 + (g_{10} u + g_{12}) x + h_9^2$$ are modular forms for the discriminant kernel of $T = H^{\oplus 2} \oplus A_2(-1)$. \\

The vanishing locus of the coefficient $h_9$ exactly cuts out the locus of K3 surfaces where the general member has transcendental lattice $T_1 = H^{\oplus 2} \oplus A_1(-1)$. In the period domain, this is the Heegner divisor of discriminant $-\frac{1}{2} \frac{|\mathrm{det}(T_1)|}{|\mathrm{det}(T)|} = -\frac{1}{3}$. In fact, $h_9$ is a Borcherds product lifted from the weak Jacobi form $\phi_0^{(2)}$. \\

By the Koecher principle, any modular form for $\mathrm{\tilde O}(T)$ whose specialization to $T_1$ vanishes must be a multiple of $h_9$. It follows that the coefficients $f_4, f_6, g_{10}, g_{12}$ are uniquely determined by their specializations: $$f_4 = -3 F_4, \; f_6 = 2 F_6, \; g_{10} = -12^5 \chi_{10}, \; g_{12} = 12^5 \chi_{12},$$ exactly as in the previous subsection, where $F_k, \chi_k$ now denote the lifts to $\mathrm{\tilde O}(T)$. \\

The remaining coefficient $h_9$ is necessarily a multiple of $\chi_9$. The multiple can be determined (up to a sign) by a similar trick as before, by defining $$D(u) = \mathrm{Disc}_x \Big( x^3 + (u^3 + f_4 u + f_6) x^2 + (g_{10} u + g_{12}) x + h_9^2 \Big)$$ and observing that $\mathrm{Disc}_u(D)$ is a homogeneous expression in $f_4,f_6,g_{10},g_{12},h_9$ of degree $90$ which equals, up to a multiple, the square of a certain reflective Borcherds product of weight $45$. Comparing Fourier coefficients then yields $h_9 = \pm 12^{9/2} \chi_9$. We fix $h_9 = 12^{9/2} \chi_9$ for simplicity and obtain the Weierstrass model $$y^2 = x^3 + (u^3 - 3 F_4 u + 2 F_6) x^2 + 12^5 (-\chi_{10} u + \chi_{12})x + 12^9 \chi_9^2.$$

\begin{remark} The modular forms for $\mathrm{\tilde O}(T)$ considered here were constructed in the context of Hermitian modular forms by Dern--Krieg \cite{DK03}.
\end{remark}
\subsection{\texorpdfstring{$n=3$}{n equals 3}} 
The coefficients of the Weierstrass equation $$y^2 = x^3 + (u^3 + f_4 u + f_6) x^2 + (g_8 u^2 + g_{10} u + g_{12}) x + h_9^2$$ are modular forms for the discriminant kernel of $T = H^{\oplus 2} \oplus A_3(-1)$. \\

The observation of the previous subsection applies to $g_8$ and shows that it is a Borcherds product whose zeros lie exactly on the Heegner divisors of discriminant $-\frac{3}{8}$, where the transcendental lattice of the general K3 surface becomes $H^{\oplus 2} \oplus A_2(-1)$. By another application of Koecher's principle, we immediately find the coefficients $f_4, f_6, g_{10}, h_9$: $$f_4 = -3F_4, \; f_6 = 2 F_6, \; g_{10} = -12^5 \chi_{10}, \; h_9 = 12^{9/2} \chi_9.$$

The form $\chi_9$ turns out also to be a Borcherds product on $\mathrm{\tilde O}(T)$, vanishing exactly on Heegner divisors of discriminant $-1/2$, and the specializations of the remaining coefficients to the locus $\chi_9 = 0$ define modular forms for $\mathrm{\tilde O}(H^{\oplus 2} \oplus A_1(-1)^{\oplus 2})$. Indeed, setting $h_9 = 0$ in the Weierstrass equation yields a K3 surface with two-elementary N\'eron--Severi lattice carrying the so-called alternate fibration $$y^2 = x^3 + (u^3 + f_4 u + f_6) x^2 + (g_8 u^2 + g_{10} u + g_{12})x,$$ whose coefficients were studied in \cite{MR4987329}. Compatibility with the results of \cite{MR4987329} forces the remaining coefficients to be $$g_8 = -\frac{12^5}{2} \chi_8, \quad g_{12} = 12^5 \chi_{12} + \frac{12^5}{2} \chi_8 F_4.$$ Altogether, we have the model $$y^2 = x^3 + (u^3 - 3 F_4 u + 2 F_6) x^2 + 12^5 \Big( -\frac{1}{2} \chi_8 u^2 - \chi_{10} u + \chi_{12} + \frac{1}{2} \chi_8 F_4 \Big) x + 12^9 \chi_9^2.$$
\subsection{\texorpdfstring{$n=4$}{n equals 4}}
The Weierstrass equation is now $$y^2 = x^3 + (u^3 + f_4 u + f_6) x^2 + (g_8 u^2 + g_{10} u + g_{12}) x + (h_7 u + h_9)^2.$$

Similarly to the earlier subsections, $h_7$ is a Borcherds product with zeros exactly on the Heegner divisors of discriminant $-\frac{2}{5}$, and its vanishing locus exactly describes the $n=3$ family of K3 surfaces. An appeal of Koecher's principle shows that \emph{all} coefficients other than $h_7$ are uniquely determined by their specializations: $$f_4 = -3F_4, \; f_6 = 2 F_6, \; g_8 = -\frac{12^5}{2} \chi_8, \; g_{10} = -12^5 \chi_{10}$$ $$g_{12} = 12^5 \chi_{12} + \frac{12^5}{2} \chi_8 F_4, \; h_9 = 12^{9/2} \chi_9.$$

To determine $h_7$, we specialize to the case that the coefficients $g_8 u^2 + g_{10} u + g_{12}$ and $h_7 u + h_9$ share a common root $\beta$, such that the equation becomes $$y^2 = x^3 + (u^3 + f_4 u + f_6) x^2 + (u - \beta) (g_8 u + \alpha_{10}) + (u - \beta)^2 h_7^2$$ with a certain coefficient $\alpha_{10}$. This occurs exactly on the vanishing locus of the resultant $$R_{26} = \mathrm{Res}_u \Big( g_8 u^2 + g_{10} u + g_{12}, h_7 u + h_9 \Big) = g_8 h_9^2 - g_{10} h_7 h_9 + g_{12} h_7^2,$$ which is therefore a certain Borcherds product of weight $26$. By comparing leading Fourier--Jacobi coefficients, we obtain $h_7 = 12^{9/2} \chi_7$ and therefore the Weierstrass model $$y^2 = x^3 + (u^3 - 3 F_4 u + 2 F_6) x^2 + 12^5 \Big( -\frac{1}{2} \chi_8 u^2 - \chi_{10} u + \chi_{12} + \frac{1}{2} \chi_8 F_4 \Big) x + 12^9 (\chi_7 u + \chi_9)^2.$$

\begin{remark}
The coefficient $g_{12}$ can be expressed as a single Gritsenko lift: $$g_{12} = 12^5 \cdot \mathrm{Lift}\Big( \Delta \phi_0^{(4)} + \frac{1}{2}\Delta E_4 \phi_{-4}^{(4)} \Big).$$
\end{remark}
\subsection{\texorpdfstring{$n=5$}{n equals 5}}
The Weierstrass equation is now \begin{equation} \label{eq: A5} y^2 = x^3 + (u^3 + f_4 u + f_6) x^2 + (g_6 u^3 + g_8 u^2 + g_{10} u + g_{12}) x + (h_7 u + h_9)^2\end{equation} and the coefficients $f_k, g_k, h_k$ are modular forms of weight $k$ for $\mathrm{\tilde O}(T)$, $T = H^{\oplus 2} \oplus A_5(-1)$. \\

$g_6$ is a Borcherds product with zeros exactly on the Heegner divisors of discriminant $-5/12$, and its vanishing locus cuts out the family of K3 surfaces of the previous subsection. Koecher's principle yields the expressions $$f_4 = -3 F_4, \; g_8 = -\frac{12^5}{2} \chi_8, \; h_7 = 12^{9/2} \chi_7, \; h_9 = 12^{9/2} \chi_9.$$

To determine the remaining coefficients, observe that at $x=-g_6$ the Weierstrass polynomial simplifies to become a quadratic polynomial in $u$: $$y^2 = (h_7^2 - g_6 g_8) u^2 + (f_4 g_6^2 - g_6 g_{10} + 2h_7 h_9) u + f_6 g_6^2 - g_6^3 - g_6 g_{12} + h_9^2.$$ The vanishing of the discriminant of that polynomial, \begin{align*} D &=f_{4}^{2} g_{6}^{3} + 4 f_{6} g_{6}^{2} g_{8} - 4 g_{6}^{3} g_{8} - 2 f_{4} g_{6}^{2} g_{10} - 4 f_{6} g_{6} h_{7}^{2} \\ &\quad + 4 g_{6}^{2} h_{7}^{2} + 4 f_{4} g_{6} h_{7} h_{9} + g_{6} g_{10}^{2} - 4 g_{6} g_{8} g_{12} \\ &\quad + 4 g_{12} h_{7}^{2} - 4 g_{10} h_{7} h_{9} + 4 g_{8} h_{9}^{2},\end{align*} is equivalent to the existence of a second infinite-order section of the form $x(u) = -g_6$, $y(u) = au+b$. Hence $D$ is a modular form that vanishes exactly on certain Noether--Lefschetz divisors; indeed, $D$ is a Borcherds product of weight $26$ with zeros exactly on Heegner divisors of discriminant $-3/4$. Comparing the Fourier series of $D$ with the product expansion of that Borcherds product uniquely determines all other coefficients:

 \begin{align*} f_6 &= 2F_6 + 414720 \chi_6 \\ &= 2 \cdot \mathrm{Lift} \Big( -504 e_0 + 720 q^{1/4} e_{3\ell} + 2808 q^{1/3} (e_{2\ell} + e_{-2\ell}) + 10800 q^{7/12} (e_{\ell} + e_{-\ell}) + 40896q e_0 + ... \Big); \\ g_6 &= -207360 \chi_6 = -1728 \cdot \mathrm{Lift} \Big( q^{1/4} e_{3\ell} - q^{1/3} (e_{2\ell} + e_{-2\ell}) + q^{7/12} (e_{\ell} + e_{-\ell}) - 2q e_0 \pm ... \Big); \\ g_{10} &= -12^5 \chi_{10} + 2^9 3^6 F_4 \chi_6 = -5184 \cdot \mathrm{Lift} \Big( q^{1/4} e_{3\ell} + 3q^{1/3} (e_{2\ell} + e_{-2\ell}) + 9 q^{7/12} (e_{\ell} + e_{-\ell}) - 218 q e_0 \pm ... \Big); \\ g_{12} &= 12^5 \chi_{12} + \frac{12^5}{2} \chi_8 F_4 - 2^{11} \cdot 3^4 \chi_6 F_6 - 2^{18} \cdot 3^{11} \cdot 5 \chi_6^2 \\ &= 1728 \Phi_{12} - \frac{3}{2} g_6^2, \quad \Phi_{12} = \mathrm{Lift} \Big( q^{1/4} e_{3\ell} + 5 q^{1/3} (e_{2\ell} + e_{-2\ell}) + 49q^{7/12} (e_{\ell} + e_{-\ell}) + 970q e_0 \pm ... \Big). \end{align*}

Note that this is the first model where one of the coefficients ($g_{12}$) does not have a representation as a Gritsenko lift. 

Altogether, we have the model \begin{align*} y^2 &= x^3 + (u^3 - 3 F_4 u + 2F_6 + 12^4 \cdot 20 \chi_6)x^2 \\ &+ 12^5 \Big( -\frac{5}{6} \chi_6 u^3 - \frac{1}{2} \chi_8 u^2 + (- \chi_{10} + \frac{3}{2} F_4 \chi_6) u + \chi_{12} + \frac{1}{2} F_4 \chi_8 - \frac{2}{3} F_6 \chi_6 - 2^8 \cdot 3^6 \cdot 5 \chi_6^2 \Big) x \\ &+ 12^9 (\chi_7 u + \chi_9)^2. \end{align*}

\begin{remark} The polynomial $D$ also arises naturally from the Jacobian elliptic fibration given by the same equation but viewing $x$ as the base variable on $\mathbb{P}^1$ and $u$ as the fiber variable: $$y^2 = (x^2 + g_6 x) u^3 + (g_8 x + h_7^2) u^2 + (f_4 x^2 + g_{10} x + 2 h_7 h_9) u + (x^3 + f_6 x^2 + g_{12} x + h_9^2),$$ which generically has singular fibers of Kodaira symbol $II^*$ at $x=\infty$, symbol $I_3$ at $x=0$, symbol $I_2$ at $x=-g_6$ and symbol $I_1$ at nine other points. The singular fibers of this fibration give another description of the N\'eron--Severi lattice as $$\mathrm{NS}(\mathcal{X}) = H \oplus E_8(-1) \oplus A_2(-1) \oplus A_1(-1).$$ The vanishing of the discriminant $D$ determines the collision of an $I_1$--fiber with the $I_2$--fiber at $x=-g_6$.
\end{remark}

We can also consider the special case of Equation \ref{eq: A5} where we demand the coefficients of $x^1$ and $x^0$ to have a common root: $$y^2 = x^3 + (u^3 + f_4 u + f_6) x^2 + (u - \beta_2) (g_6 u^2 + \alpha_8 u + \alpha_{10}) + (u - \beta_2)^2 h_7^2.$$
This imposes a type $I_2$ singular fiber at $u=\beta_2$ which changes the general transcendental lattice to $T_1 = H^{\oplus 2} \oplus A_1(-1) \oplus A_3(-1)$. In terms of the Weierstrass coefficients, this is the vanishing locus of the resultant $$R_{33} = g_6 h_9^3 - g_8 h_7 h_9^2 + g_{10} h_7^2 h_9 - g_{12} h_7^3,$$ which is therefore a Borcherds product of weight $33$ with zeros exactly on Heegner divisors of discriminant $-2/3$.
The coefficients $\beta_2, f_4, f_6, g_6, h_7, \alpha_8, \alpha_{10}$ generate a free algebra of meromorphic modular forms of a class considered in \cite{WW21} associated to reducible root system (in this case, the root system $A_1 + A_3$); in the terminology of \cite{WW21}, $\beta_2$ is of Abelian type, $f_4, f_6$ of Eisenstein type, and $g_6, h_7, \alpha_8, \alpha_{10}$ of Jacobi type. Explicit expressions for these coefficients in terms of Gritsenko lifts are straightforward to describe by means of the pullback trick. For later purposes we mention that $\beta_2$ is the singular Gritsenko lift
$$\beta_2 = 12 \cdot \mathrm{Lift}(\phi_{-2}^{(1)} \otimes E_4),$$ where $E_4$ is the Jacobi Eisenstein series of $A_3$ lattice index, and $\otimes$ is the ``exterior product" of Jacobi forms $$f \otimes g(\tau, z_1,...,z_{m+n}) = f(\tau, z_1,...,z_m) \cdot g(\tau, z_{m+1},...,z_{m+n}).$$
This follows from the identity $$\iota^* h_7 \cdot \beta_2 + \iota^* h_9 = 0$$ for the inclusion $\iota : A_1 \oplus A_3 \rightarrow A_5$.
\subsection{\texorpdfstring{$n=6$}{n equals 6}}
The Weierstrass equation is now $$y^2 = x^3 + (u^3 + f_4 u + f_6) x^2 + (g_6 u^3 + g_8 u^2 + g_{10} u + g_{12}) x + (h_5 u^2 + h_7 u + h_9)^2$$ and the coefficients $f_k, g_k, h_k$ are modular forms of weight $k$ for $\mathrm{\tilde O}(T)$, $T = H^{\oplus 2} \oplus A_6(-1)$. 
The coefficient $h_5$ is a Borcherds product of weight $5$ with zeros exactly on the Heegner divisors of discriminant $-3/7$ that cut out the locus of K3 surfaces whose very general transcendental lattice is $H^{\oplus 2} \oplus A_5(-1)$. From Koecher's principle and the computations in the previous subsection, we obtain the coefficients $$f_4 = -3 F_4, \; f_6 = 2F_6 + 414720 \chi_6, \; g_6 = -207360 \chi_6,$$ $$h_7 = 12^{9/2} \chi_7, \quad g_8 = -\frac{12^5}{2} \chi_8,$$ while all remaining coefficients are determined only up to a multiple of $\chi_5$. \\ 

The remaining coefficients $g_{10}, g_{12}, h_9$ can be determined using the following trick: We again specialize to the case that the $x^1$- and $x^0$-coefficients have a common root: $$y^2 = x^3 + (u^3 + f_4 u + f_6) x^2 + (u - \beta_2) (g_6 u^2 + \alpha_8 u + \alpha_{10}) + (u - \beta_2)^2 (h_5 u + \gamma_7)^2,$$ where $f_4, f_6, \beta_2, g_6, \alpha_8, \alpha_{10}, h_5, \gamma_7$ now generate the graded ring of meromorphic modular forms attached to the reducible root system $A_1 + A_4$ in the sense of \cite{WW21}. By comparison with the previous subsection, we find that the root $\beta_2$ is again a meromorphic Gritsenko lift: $$\beta_2 = 12 \cdot \mathrm{Lift}(\phi_{-2}^{(1)} \otimes E_4),$$ where $E_4$ is now the Jacobi Eisenstein series of $A_4$ lattice index. The coefficients then turn out to be uniquely determined by the identities $$\iota^* g_6 \cdot \beta_2^3 + \iota^* g_8 \cdot \beta_2^2 + \iota^* g_{10} \cdot \beta_2 + \iota^* g_{12} = 0, \quad \iota^* h_5 \cdot \beta_2^2 + \iota^* h_7 \cdot \beta_2 + \iota^* h_9 = 0.$$

Altogether, we have the following coefficients:

\begin{align*}
f_4 &= -3 F_4; \\
h_5 &= \frac{5}{2} 12^{9/2} \chi_5; \\
f_6 &= 2 F_6 + 414720 \chi_6; \\
g_6 &= -207360 \chi_6; \\
h_7 &= 12^{9/2} \chi_7; \\
g_8 &= -\frac{12^5}{2} \chi_8; \\
h_9 &= 12^{9/2} \chi_9 - \frac{3}{2} 12^{9/2} F_4 \chi_5 \\ &= \frac{12^{5/2}}{2} \cdot \mathrm{Lift} \Big( q^{1/7} (e_{3\ell} - e_{-3\ell}) + 11q^{2/7} (e_{2\ell} - e_{-2\ell}) + 109q^{4/7} (e_{\ell} - e_{-\ell}) \pm ... \Big); \\
g_{10} &= -12^5 \chi_{10} + 2^9 3^6 \cdot F_4 \chi_6 + 2^{18} \cdot 3^9 \cdot 5^3 \cdot \chi_5^2 \\ &= -2592 \cdot \mathrm{Lift} \Big( q^{1/7} (e_{3\ell} + e_{-3\ell}) + 5q^{2/7} (e_{2\ell} + e_{-2\ell}) + 13q^{4/7} (e_{\ell} + e_{-\ell}) - 462q e_0 \pm ... \Big); \\
g_{12} &= 12^5 \chi_{12} + \frac{12^5}{2} F_4 \chi_8 + 2^{19} \cdot 3^9 \cdot 37 \cdot \chi_5 \chi_7 - 2^{11} \cdot 3^4 \cdot F_6 \chi_6 - 2^{18} \cdot 3^{11} \cdot 5 \chi_6^2 \\ &= 2^{19} \cdot 3^9 \cdot 5 \chi_5 \chi_7 - 2^{17} \cdot 3^9 \cdot 5^2 \chi_6^2  \\ &\quad \quad + 864 \cdot \mathrm{Lift} \Big( q^{1/7} (e_{3\ell} + e_{-3\ell}) + 9q^{2/7} (e_{2\ell} + e_{-2\ell}) + 89q^{4/7} (e_{\ell} + e_{-\ell}) + 1762 q e_0 - 519q^{8/7} (e_{3\ell} + e_{-3\ell}) \pm ... \Big). \end{align*}

All coefficients other than $g_{12}$ can be represented as Gritsenko lifts.
\subsection{\texorpdfstring{$n=7$}{n equals 7}}
The Weierstrass equation is $$y^2 = x^3 + (u^3 + f_4 u + f_6) x^2 + (g_4 u^4 + g_6 u^3 + g_8 u^2 + g_{10} u + g_{12}) x + (h_5 u^2 + h_7 u + h_9)^2$$ and the coefficients $f_k, g_k, h_k$ are modular forms of weight $k$ for $\mathrm{\tilde O}(T)$, $T = H^{\oplus 2} \oplus A_7(-1)$.
Using the same argument as the previous subsections, we find that $g_4$ is a Borcherds product with zeros exactly on the Heegner divisors of discriminant $-7/16$ and that the coefficients $f_6, g_6, h_5, h_7$ are immediately determined uniquely by their specialization to $g_4 = 0$: $$f_6 = 2 F_6 + 414720 \chi_6, \; g_6 = -207360 \chi_6, \; h_5 = \frac{5}{2} 12^{9/2} \chi_5, \; h_7 = 12^{9/2} \chi_7,$$ while the other coefficients are determined only up to multiples of $g_4$. \\

We proceed by first specializing to the case that the $x^1-$ and $x^0-$coefficients of the Weierstrass polynomial share a common root $\beta_2$: \begin{equation}\label{eq:A1+A5}y^2 = x^3 + (u^3 + f_4 u + f_6) x^2 + (u - \beta_2) (g_4 u^3 + \alpha_6 u^2 + \alpha_8 u + \alpha_{10}) + (u - \beta_2)^2 (h_5 u + \gamma_7)^2,\end{equation}
bearing in mind that this corresponds to the vanishing locus of a resultant of degree $44$ and that (by another appeal to Koecher's principle) all coefficients $f_k, g_k, h_k$, being of weight less than $44$, can be uniquely determined from their specializations. \\

We can further specialize Equation \eqref{eq:A1+A5} by imposing the existence of a section with $x(u) = -g_4 (u - \beta_2)$. Then $(u - \beta_2)^2$ can be factored out of the right-hand side of \eqref{eq:A1+A5}, and the prefactor $-g_4$ makes the remainder become quadratic: \begin{align*} &\quad x^3 + (u^3 + f_4 u + f_6) x^2 + (u - \beta_2) (g_4 u^3 + \alpha_6 u^2 + \alpha_8 u + \alpha_{10}) + (u - \beta_2)^2 (h_5 u + \gamma_7)^2 \\ &= (u - \beta_2)^2 q(u), \quad \mathrm{deg}(q) = 2.
\end{align*}
The discriminant of $q$ splits into $g_4$ times \begin{align*} R_{20} &:=  -f_4^2g_4^3 + 2f_4g_4^4 - g_4^5 - 4g_4^3\alpha_6\beta_2 + 4g_4^2h_5^2\beta_2 - 4f_6g_4^2\alpha_6 + 2f_4g_4^2\alpha_8 - 2g_4^3\alpha_8 \\ &+ 4f_6g_4h_5^2 - 4f_4g_4h_5\gamma_7 + 4g_4^2h_5\gamma_7 - g_4\alpha_8^2 + 4g_4\alpha_6\alpha_{10} - 4\alpha_{10}h_5^2 + 4\alpha_8h_5\gamma_7 - 4\alpha6\gamma_7^2; \end{align*}
hence $R_{20}$ is holomorphic modular form whose only zero outside of the poles of $\beta_2$ is the Noether--Lefschetz divisor associated to the additional infinite-order section $$x(u) = -g_4(u - \beta_2), \quad y = (u - \beta_2) \sqrt{q(u)}.$$ Indeed, $R_{20}$ is a Borcherds product of weight 20, with sixth-order zeros on the Heegner divisors of discriminant $-1/4$ and simple zeros on the Heegner divisors of discriminant $-3/4$, and we can compare Fourier coefficients between $R_{20}$ and its Borcherds product expansion. This turns out \emph{not} to determine all coefficients uniquely, but it does determine them after using the additional constraints that certain expressions (e.g. $g_6 = \alpha_6 - \beta_2 g_4$) arise as restrictions of modular forms on $\mathrm{\tilde O}(T)$. We finally obtain the following coefficients: \\

\begin{align*} f_4 &= -3 F_4 - 2903040 \chi_4 \\ &= -144 \cdot \mathrm{Lift} \Big( 5e_0 + e_{4\ell} + 4q^{1/16} (e_{3\ell} + e_{-3\ell}) + 6q^{1/4} (e_{2\ell} + e_{-2\ell}) + 4q^{9/16}(e_{\ell} + e_{-\ell}) + 2q e_0 + 10q e_{4\ell} + ... \Big); \\ g_4 &= -725760 \chi_4 \\ &= 144 \cdot \mathrm{Lift} \Big( e_{4\ell} - q^{1/16} (e_{3\ell} + e_{-3\ell}) + q^{1/4} (e_{2\ell} - e_{-2\ell}) - q^{9/16} (e_{\ell} - e_{-\ell}) + 2q e_0 \pm ... \Big); \\ h_5 &= \frac{5}{2} 12^{9/2} \chi_5 \\ &= -144\sqrt{3} \cdot \mathrm{Lift} \Big( q^{1/16} (e_{3\ell} - e_{-3\ell}) - 2q^{1/4} (e_{2\ell} - e_{-2\ell}) + 3q^{9/16} (e_{\ell} - e_{-\ell}) - 5q^{25/16} (e_{\ell} - e_{-\ell}) \pm ... \Big); \\ f_6 &= 2 F_6 + 414720 \chi_6 \\ &= 144 \cdot \mathrm{Lift} \Big( -7e_0 + e_{4\ell} + 4q^{1/16} (e_{3\ell} + e_{-3\ell}) + 30q^{1/4} (e_{2\ell} + e_{-2\ell}) + 100q^{9/16} (e_{\ell} + e_{-\ell}) + 122 q e_0 \pm ... \Big); \\ g_6 &= -207360 \chi_6 \\ &= -288 \cdot \mathrm{Lift} \Big( 2 e_{4\ell} + q^{1/16} (e_{3\ell} + e_{-3\ell}) - 10q^{1/4}(e_{2\ell} + e_{-2\ell}) + 25q^{9/16} (e_{\ell} + e_{-\ell}) - 92 qe_0 - 48q e_{4\ell} \pm ... \Big); \\ h_7 &= 12^{9/2} \chi_7 \\ &= 288\sqrt{3} \cdot \mathrm{Lift}\Big( q^{1/16} (e_{3\ell} - e_{-3\ell}) + 4q^{1/4} (e_{2\ell} + e_{-2\ell}) - 21q^{9/16} (e_{\ell} + e_{-\ell}) - 48q^{17/16} (e_{3\ell} - e_{-3\ell}) \pm ... \Big); \\ g_8 &= -\frac{12^5}{2} \chi_8 + 12^5 \cdot 7 \cdot \chi_4 F_4 - 12^8 \cdot 5^2 \cdot 7^2 \cdot \chi_4^2 \\ &= -3\chi_4^2 + 864 \cdot \mathrm{Lift} \Big( e_{4\ell} + 2q^{1/16}(e_{3\ell} + e_{-3\ell}) + q^{1/4} (e_{2\ell} + e_{-2\ell}) - 46q^{9/16} (e_{\ell} + e_{-\ell}) + 386qe_0 - 144q e_{4\ell} \pm ... \Big); \\ h_9 &= 12^{9/2} \chi_9 - \frac{3}{2} \cdot 12^{9/2} \chi_5 F_4 - 12^{17/2} \cdot 5^2 \cdot 7 \chi_4 \chi_5 \\ &= -12^{17/2} \cdot \frac{175}{2} \chi_4 \chi_5 - 72 \cdot \mathrm{Lift}\Big( q^{1/16} (e_{3\ell} - e_{-3\ell}) + 10q^{1/4} (e_{2\ell} - e_{-2\ell}) + 99q^{9/16} (e_{\ell} - e_{-\ell}) \pm ...  \Big);\\ g_{10} &= -12^5 \chi_{10} + 12^9 \cdot 5^3 \cdot \chi_5^2 - 2^{20} \cdot 3^8 \cdot 5^2 \cdot 7 \cdot \chi_4 \chi_6 - 2^{11} \cdot 3^4 \cdot 5 \chi_4 F_6 + 2^9 \cdot 3^6 \chi_6 F_4 \\ &= 12^9 \cdot \frac{175}{2} (\chi_5^2 - \chi_4 \chi_6) \\ &\quad - 576 \cdot \mathrm{Lift}\Big( 2e_{4\ell} + 7q^{1/16} (e_{3\ell} + e_{-3\ell}) + 38q^{1/4} (e_{2\ell} + e_{-2\ell}) + 79 q^{9/16} (e_{\ell} + e_{-\ell})-4316qe_0 - 720q e_{4\ell} \pm ... \Big)  \\ g_{12} &= 12^5 \chi_{12} - 2^{18} \cdot 3^{11} \cdot 5 \chi_6^2 + 2^{19} \cdot 3^9 \cdot 37 \chi_5 \chi_7 - 2^{20} \cdot 3^8 \cdot 43 \chi_4 \chi_8 + 2^9 \cdot 3^5 \cdot F_4 \chi_8 - 2^{11} \cdot 3^4 \cdot F_6 \chi_6 \\ & \quad - 2^{16} \cdot 3^9 \cdot 7 \cdot 2963 F_4 \chi_4^2  - 2^8 \cdot 3^6 F_4^2 \chi_4 - 2^{27} \cdot 3^{12} \cdot 5^2 \cdot 7^2 \cdot 11 \cdot 23 \chi_4^3 \\ &= 2^{19} \cdot 3^9 \cdot 5 \chi_5 \chi_7 - 2^{17} \cdot 3^9 \cdot 5^2 \chi_6^2 + 2^{16} \cdot 3^9 \cdot 5^2 \cdot 7 \chi_4 \chi_8 - 2^{15} \cdot 3^9 \cdot 5^2 \cdot 7^3 F_4 \chi_4^2 - 2^{23} \cdot 3^{11} \cdot 5^3 \cdot 7^4 \chi_4^3 \\ &+ 144 \cdot \mathrm{Lift} \Big( e_{4\ell} + 5q^{1/16} (e_{3\ell} + e_{-3\ell}) + 49q^{1/4} (e_{2\ell} + e_{-2\ell}) + 485 q^{9/16} (e_{\ell} + e_{-\ell}) + 9602 q e_0 - 672 q e_{4\ell} \pm ... \Big). \end{align*}

The coefficients $f_4, g_4, h_5, f_6, g_6, h_7$ admit representations as Gritsenko lifts, while the coefficients $g_8, h_9, g_{10}, g_{12}$ do not. The vector-valued modular forms inside Lift in the expressions above determine the leading Fourier--Jacobi coefficients of $g_k$ and $h_k$.
\end{proof}
\section{The \texorpdfstring{$D_n$}{Dn}-family of K3 surfaces}

\begin{proof}[\textbf{Proof of Theorem~\ref{thm:main}\,\ref{thm:partI}}]

In Proposition \ref{prop:WEQ_rank10} we established a Weierstrass normal form for the general $S$-polarized K3 surface with $T = S^{\perp} = H \oplus H \oplus D_8(-1)$.  The fact that the multiplicity of the frame $(D_8, \{ \mathbb{I}\})$ was shown to equal one implies that, for very general parameters, the associated Jacobian elliptic fibration is uniquely determined by its frame, up to automorphisms of the surface.  It follows that the parameter space of the Weierstrass models coincides with the moduli space associated with the discriminant kernel $\mathrm{\tilde O}(T)$. Thus, the coefficients $F_k$, $G_k$, and $H_k$ define modular forms for $\mathrm{\tilde O}(T)$ of the indicated weights.

We set $H_8=-G_4^2$ and write the Weierstrass equation as
\begin{equation}
\label{eq:D8}
\begin{split}
 y^2  = x^3 + & \Big( u^3 + F_4 u + F_6\Big) x^2 + \Big( G_8 u^2 + G_{10} u + G_{12} \Big) x \\
 - & \ G_4^2  u^5 + H_{10} u^4 + H_{12} u^3 + H_{14} u^2 + H_{16} u + H_{18}.
\end{split}
\end{equation}
We now explain how the coefficients $F_k, G_k, H_k$ are determined.
\subsection*{Reduction to \texorpdfstring{$A_7$}{A7}} The sublattice $H^{\oplus 2} \oplus A_7(-1)$ embeds in $H^{\oplus 2} \oplus D_8(-1)$ in two $\mathrm{\tilde O}(T)$-inequivalent ways as a Heegner divisor of discriminant $-1$. Both occur as the divisor of a Borcherds product of weight $128$. This implies abstractly that there is a specialization map from \eqref{eq:D8} to the family of K3 surfaces with transcendental lattice $H^{\oplus 2} \oplus A_7(-1)$, and moreover that the coefficients are uniquely determined by their images under this specialization. We will now make this concrete. \\ 

Let $h_5$ be a variable independent of the coefficients of \eqref{eq:D8}. After applying the substitution $$x \mapsto x \pm \left( G_4 u + \frac{h_5^2 - G_{10}}{2G_4} \right),$$ we find that Vinberg's $D_8$ model coincides with the $A_7$ model if and only if the coefficients $F_k, G_k, H_k$ and $f_k, g_k, h_k$ are related by

\begin{align} 
    F_4 &= f_4 -\frac{3}{2} g_4;  \notag\\
    G_4 &= \pm \frac{1}{2} g_4;  \notag\\
    F_6 &= f_6 - \frac{3}{2} g_6;  \notag\\
    G_8 &= -f_4 g_4 + \frac{3}{4} g_4^2 + g_8;  \notag\\
    G_{10} &= -f_4 g_6 - f_6 g_4 + \frac{3}{2} g_4 g_6 + g_{10};  \notag\\
\label{eq:D8toA7}
    H_{10} &= -\frac{1}{2} g_4 g_6 + h_5^2; \\
    G_{12} &= -f_6 g_6 + \frac{3}{4} g_6^2 + g_{12};  \notag\\
    H_{12} &= \frac{1}{4} f_4 g_4^2 - \frac{1}{8} g_4^3 - \frac{1}{2} g_4 g_8 + 2 h_5 h_7 - \frac{1}{4} g_6^2;  \notag\\
    H_{14} &= \frac{1}{2} f_4 g_4 g_6 - \frac{3}{8} g_4^2 g_6 + \frac{1}{4} f_6 g_4^2 - \frac{1}{2} g_4 g_{10} + 2 h_5 h_9 - \frac{1}{2} g_6 g_8 + h_7^2;  \notag\\
    H_{16} &= \frac{1}{4} f_4 g_6^2 - \frac{3}{8} g_4 g_6^2 + \frac{1}{2} f_6 g_4 g_6 - \frac{1}{2} g_4 g_{12} - \frac{1}{2} g_6 g_{10} + 2 h_7 h_9;  \notag\\
    H_{18} &= \frac{1}{4} f_6 g_6^2 - \frac{1}{8} g_6^3 - \frac{1}{2} g_6 g_{12} + h_9^2. \notag
\end{align}

Conversely, for any values of $f_4, f_6, g_4, g_6, g_8, g_{10}, g_{12}, h_7, h_9$ and $H_{16}, H_{18}$, we can solve Equation \eqref{eq:D8toA7} with unique values for $F_4, F_6, G_4^2, G_8, G_{10}, G_{12}, H_{10}, H_{12}, H_{14}$ and $h_5$. \\

As expected, Equation \eqref{eq:D8toA7} turns out to have a unique (up to a sign in the case of $G_4$) solution that is compatible with our earlier computations for the $H^{\oplus 2} \oplus A_7(-1)$ family. It is easily computed using the compatibility of the Gritsenko lift with the pullback $\iota^*$. To express the results, we note that the discriminant form $D_8^{\vee}/D_8$ is just $\mathbb{Z}^2 / 2 \mathbb{Z}^2$ with the quadratic form $(x,y) \mapsto \frac{1}{2}xy \in \mathbb{Q}/\mathbb{Z}$. We label the cosets $\mathbf{0}, \mathbf{1},\mathbf{2}, v$ where $\mathbf{0}$ is the zero element, $\mathbf{1}, \mathbf{2}$ are isotropic, and $v$ has norm $1/2+\mathbb{Z}$. The cosets $\mathbf{1}$ and $\mathbf{2}$ are equivalent under $\mathrm{O}^+(H^{\oplus 2} \oplus D_8(-1))$ and are effectively indistinguishable.\\

The coefficients of \eqref{eq:D8} are determined as follows:

 \begin{align*} F_4 &= -360 \cdot \mathrm{Lift}(2 e_{\mathbf{0}} + e_{\mathbf{1}} + e_{\mathbf{2}});\\
 G_4 &= \pm 72 \cdot \mathrm{Lift}(e_{\mathbf{1}} - e_{\mathbf{2}}); \\
 F_6 &= 1008 \cdot \mathrm{Lift} \left( \begin{gathered} \quad (1 + 24q + 24q^2 + ...) \cdot (-e_{\mathbf{0}} + e_{\mathbf{1}} + e_{\mathbf{2}}) \\ + 24 (q^{1/2} + 4q^{3/2} + 6q^{5/2} + ...) e_{v} \end{gathered} \right); \\
 G_8 &= -15 G_4^2 + 1296 \Phi_8, \\ &\quad \Phi_8 = \mathrm{Lift} \left( \begin{gathered} \; (256q + 2048q^2 + 7168q^3 + ...) e_{\mathbf{0}} \\ + (1 - 16q + 112q^2 - 448q^3 \pm ...) (e_{\mathbf{1}} + e_{\mathbf{2}}) \\ + (-32q^{1/2} - 896q^{3/2} - 4032q^{5/2} \pm ...) e_v \end{gathered} \right); \\  G_{10} &= \frac{24}{35} F_4 F_6 - \frac{864}{35} \Phi_{10}, \\ &\quad \Phi_{10} = \mathrm{Lift} \left( \begin{gathered} \; (44 - 11424q - 387744q^2 \pm ...) e_{\mathbf{0}} \\ + (1 - 11256 q - 360696 q^2 \pm ...) (e_{\mathbf{1}} + e_{\mathbf{2}}) \\ + (-336q^{1/2} - 81984 q^{3/2} - 1050336 q^{5/2} \pm ...) e_v \end{gathered} \right); \\ H_{10} &= \frac{1}{35} F_4 F_6 - \frac{144}{35} \Phi_{10}, \\ &\quad \Phi_{10} = \mathrm{Lift} \left( \begin{gathered} (11 + 10584 q + 333144 q^2 \pm ...) e_{\mathbf{0}} \\ + (-26 - 3024q -83664 q^2 \pm ...) (e_{\mathbf{1}} + e_{\mathbf{2}}) \\ + (-504 q^{1/2} - 122976q^{3/2} - 1575504q^{5/2} \pm ...) e_v \end{gathered}\right); \\ G_{12} &= -\frac{6}{125} F_4^3 + \frac{22}{5} F_4 G_4^2 + \frac{9}{49} F_6^2 - \frac{2592}{175} \Phi_{12}, \\ &\quad \Phi_{12} = \mathrm{Lift} \left( \begin{gathered} \; (13 - 35360 q + 318560q^2 \pm ...) e_{\mathbf{0}} \\ + (-6 + 38720 q + 114880q^2 \pm ...) (e_{\mathbf{1}} + e_{\mathbf{2}}) \\ + (-4400q^{1/2} + 49600q^{3/2} + 4610400q^{5/2} \pm ...) e_v \end{gathered} \right); \\ H_{12} &= -\frac{2}{375} F_4^3 - \frac{27}{5} F_4 G_4^2 + \frac{1}{49} F_6^2 - \frac{144}{175} \Phi_{12},\\ &\quad \Phi_{12} = \mathrm{Lift} \left( \begin{gathered} \; (26 + 108480q + 10672320q^2 \pm ...) e_{\mathbf{0}} \\ + (163 - 17760q + 1030560q^2 \pm ...) (e_{\mathbf{1}} + e_{\mathbf{2}}) \\ + (2400q^{1/2} - 1200000q^{3/2} - 44740800q^{5/2} \pm ...) e_v \end{gathered}\right); \\ H_{14} &= -\frac{1}{50} F_4 G_{10} + \frac{122}{25} F_4 H_{10} - \frac{98}{5} G_4^2 F_6 + \frac{3888}{25} \mathrm{Lift}(\Phi_{14}), \\ &\quad \Phi_{14} = \mathrm{Lift} \left( \begin{gathered} \; (-512 q - 139264q^2 - 5163008q^3 \pm ...) e_{\mathbf{0}} \\ +(1 + 248q + 3832q^2 + \pm...) (e_{\mathbf{1}} + e_{\mathbf{2}}) \\ + (16q^{1/2} + 7744q^{3/2} + 1021536q^{5/2} \pm ...) e_v \end{gathered}\right); \\ H_{16} &= \frac{497}{62500} F_4^4 + \frac{1591}{1250} F_4^2 G_4^2 + \frac{61}{20} G_4^4 - \frac{639}{9625} F_6^2 F_4 + \frac{2}{25} G_{12} F_4 - \frac{62}{125} H_{12} F_4 \\ &\quad + \frac{332}{1925} G_{10} F_6 + \frac{1664}{1925} H_{10} F_6 - \frac{648}{240625} \Phi_{16}, \\ &\quad \Phi_{16} = \mathrm{Lift} \left( \begin{gathered} \; (2414 + 217034080q - 16134309280q^2 \pm ...) e_{\mathbf{0}} \\ + (1007 + 175093040q + 7395690160q^2 \pm ...) (e_{\mathbf{1}} + e_{\mathbf{2}}) \\  + (-5547200 q^{1/2} - 1396256000q^{3/2} - 26340777600q^{5/2} \pm ...) e_v \end{gathered} \right); \\ H_{18} &= \frac{8328}{175} F_4 F_6 G_4^2 - \frac{174}{5} G_8 H_{10} + \frac{7}{5} F_4 H_{14} + \frac{137}{35} F_6 H_{12} - \frac{2}{35} F_6 G_{12} \\ &- \frac{1}{250} F_4^2 G_{10} - \frac{793}{125} F_4^2 H_{10} + \frac{1797}{10} G_4^2 G_{10} - \frac{2319}{5} G_4^2 H_{10} + \frac{1296}{875} \Phi_{18}, \\ &\quad \Phi_{18} = \mathrm{Lift} \left( \begin{gathered} \; (134656q - 29360128q^2 - 1986553856q^3 \pm ...) e_{\mathbf{0}} \\ + (37 - 135544q + 22084744q^2 \pm ...) (e_{\mathbf{1}} + e_{\mathbf{2}}) \\ + (5392 q^{1/2} - 4697792 q^{3/2} + 53266272q^{5/2} \pm ...) e_v \end{gathered} \right).\end{align*}
\end{proof}

 \begin{corollary}
 The modular forms described by the coefficients $F_k$, $G_k$, $H_k$ have Fourier--Jacobi expansions beginning in degrees $d = 0, 1, 2$, respectively. Dividing the leading coefficients in the Fourier--Jacobi series by $\Delta^d$ yields generators of the Wirthm\"uller ring of Weyl-invariant weak Jacobi forms for $D_8$.
 \end{corollary}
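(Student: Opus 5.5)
The plan is to read off the Fourier--Jacobi expansions directly from the explicit Gritsenko-lift expressions for $F_k, G_k, H_k$ obtained in the proof of Theorem~\ref{thm:main}\ref{thm:partI}. Then we check that the leading coefficients, after division by the appropriate power of $\Delta$, have exactly the weights and indices listed in Theorem~\ref{theorem:wirthmueller} for $R=D_8$. Finally, Theorem~\ref{thrm:freealg}(ii) is used in the converse direction.

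\emph{Step 1: the order of the Fourier--Jacobi expansion.} For a Gritsenko lift $\mathrm{Lift}(\phi)$, the $m$-th Fourier--Jacobi coefficient is $\phi|V_m$, so the expansion starts in degree $0$ if $c(0,0)\neq 0$ and in degree $1$ otherwise. For $F_4$ and $F_6$ the lifted vector-valued forms have nonzero constant term in $e_{\mathbf 0}$, so these have degree $0$, with leading coefficients multiples of $E_4,E_6$. For $G_4$ and for $\Phi_8,\Phi_{10},\Phi_{12}$ (the $G$-type pieces) the $e_{\mathbf 0}$-component has no constant term while the isotropic components $e_{\mathbf 1},e_{\mathbf 2}$ do, so they are Jacobi forms of index $D_8(1)$ and the expansion starts in degree $1$. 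The correction terms such as $F_4F_6$, $G_4^2$ and $F_4G_4^2$ have to be handled so that the degree-$0$ parts cancel. Concretely, I will verify that the stated linear combinations for $G_{10}$ and $G_{12}$ kill the $\Delta^0$ term. For the $H_k$ one shows that both the degree-$0$ and degree-$1$ parts vanish. The cleanest route is geometric: under the specialization \eqref{eq:D8toA7} and the known $A_7$ expansions, $H_{10}=h_5^2-\tfrac12 g_4g_6$ and the other $H$'s are quadratic in forms of degree $\ge1$. So I would argue by pulling back to $A_7$ via $\iota^*$, which is injective on the relevant spaces by the Koecher argument. This shows the $H_k$ vanish to order $2$ along the cusp, and the nonvanishing of the degree-$2$ term follows from nonvanishing of the pulled-back leading term.

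\emph{Step 2: matching weights and indices.} Dividing by $\Delta^d$ gives weak Jacobi forms of weight $k-12d$ and index $D_8(d)$. For $F$ this gives $(4,0)$ and $(6,0)$, i.e.\ $E_4,E_6$. For $G_4,G_8,G_{10},G_{12}$ it gives $(-8,1),(-4,1),(-2,1),(0,1)$, and for $H_{10},\dots,H_{18}$ it gives $(-14,2),(-12,2),(-10,2),(-8,2),(-6,2)$. This is exactly the list in Theorem~\ref{theorem:wirthmueller} for $D_8$: $(-n,1)=(-8,1)$, $(-4,1),(-2,1),(0,1)$, and $(2k,2)$ for $-7\le k\le -3$.

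\emph{Step 3: the generating property.} By Theorem~\ref{thm:main}\ref{thm:partI}, the forms $F_k,G_k,H_k$ freely generate $M_*(\Gamma_{D_8})$. Weights alone do not force their leading coefficients to generate the Wirthm\"uller ring, so I would argue as follows. Let $\psi_{k,m}$ denote the normalized leading coefficients. If they failed to generate, some $\psi_{k,m}$ would be a polynomial in lower generators plus something outside the ring. A cleaner approach is to choose genuine Wirthm\"uller generators $\phi_{-k,m}$ and lift them, via Theorem~\ref{thrm:freealg}(ii), to generators $\Psi$ with those leading coefficients. Each of our forms is then a polynomial in the $\Psi$'s, and the leading coefficient map is a ring homomorphism into the associated graded ring. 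One then needs the transition matrix modulo lower order, meaning modulo products and higher Fourier--Jacobi degree, to be invertible in each degree. This holds because both families minimally generate the same free algebra with the same weight multiset, so the map on indecomposables is an isomorphism, and it respects the Fourier--Jacobi filtration.

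The main obstacle is this last compatibility. Invertibility on indecomposables must hold \emph{within} each filtration level, for instance so that $H_{14}$ does not degenerate to a form of higher Fourier--Jacobi order. I would handle it by checking directly that each stated $\Phi_k$ has a nonzero leading term with the claimed index, and that these leading terms are algebraically independent over $\mathbb C[E_4,E_6]$. The independence check is done by restricting to $A_7$ via $\iota^*$ and using the known $A_7$ generators.
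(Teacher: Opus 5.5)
Your Steps 1 and 2 are the right bookkeeping. The paper gives no separate argument: the corollary is read off from the explicit lift formulas in the proof of Theorem~\ref{thm:main}\ref{thm:partI} together with Theorem~\ref{thrm:freealg}. The gap is in Step 3.

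\textbf{The gap in Step 3.} Your claim that the map on indecomposables ``respects the Fourier--Jacobi filtration'' is exactly what needs proof, and the check you propose to supply it cannot work. The pullback $\iota^*$ maps the $D_8$ Wirthm\"uller ring, which is free on $11$ generators, into the $A_7$ one, which is free on only $10$ ($E_4,E_6$ and eight forms of index one). So the images $\iota^*\psi_i$ of your eleven normalized leading coefficients are necessarily algebraically dependent, and restriction to $A_7$ can never certify their independence. The criterion ``each $\Phi_k$ has a nonzero leading term of the claimed index'' is also not the relevant one. For example, the lift $\Phi_{14}$ has order one, and the degree-two coefficient of $H_{14}$ also receives contributions from $F_4G_{10}$, $F_4H_{10}$ and $G_4^2F_6$.

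\textbf{Two smaller points in Step 1.} First, the lifts $\Phi_{10},\Phi_{12}$ occurring in $G_{10},G_{12}$ \emph{do} have constant terms $44e_{\mathbf 0}$ and $13e_{\mathbf 0}$. These cancel against $\tfrac{24}{35}F_4F_6$ and against $-\tfrac{6}{125}F_4^3+\tfrac{9}{49}F_6^2$, respectively. Second, injectivity of $\iota^*$ on modular forms of small weight (the Koecher/Borcherds argument) says nothing about vanishing orders. What you need is injectivity of $\iota^*$ on the degree-$0$ and degree-$1$ Fourier--Jacobi coefficients, i.e.\ on $W(D_8)$-invariant Jacobi forms of index $D_8(1)$. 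This is true: such a form vanishing on $s^\perp$, with $s=(\tfrac12,\dots,\tfrac12)$, vanishes on the $64$ hyperplanes of the $W(D_8)$-orbit. It is then divisible by a theta product of index $D_8(16)$, which forces it to be zero. But this has to be argued.

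\textbf{The missing idea.} No nondegeneracy check is needed at all: once $\operatorname{ord}G_k\ge 1$ and $\operatorname{ord}H_k\ge 2$ are known, a dimension count finishes the proof.
\begin{enumerate}[label=(\arabic*)]
\item Let $M_k^{(\ge d)}$ denote the weight-$k$ forms of Fourier--Jacobi order $\ge d$. Take generators $\Psi_j$ as in Theorem~\ref{thrm:freealg}(ii). The monomials $\Psi^\alpha$ form a filtration-adapted basis, because their leading coefficients $\Delta^{d}\phi^\alpha$ are linearly independent (the Wirthm\"uller ring is free).
\item Hence $\dim M_k^{(\ge d)}$ equals the number of monomials of weight $k$ and index $\ge d$, and $M_k^{(\ge d)}/M_k^{(\ge d+1)}\cong \Delta^d J^{\mathrm{weak},W}_{k-12d,D_8(d)}$.
\item Now assign indices $0,1,2$ to $F,G,H$. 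By your Step 2 the bidegrees match. By freeness (Theorem~\ref{thm:main}\ref{thm:partI}), the span of the $F,G,H$-monomials of weight $k$ and assigned index $\ge d$ has that same dimension. By the order bounds, this span lies in $M_k^{(\ge d)}$. Hence the two are equal for every $d$.
\item Consequently the monomials of assigned index exactly $d$ map isomorphically onto $\Delta^d J^{\mathrm{weak},W}_{k-12d,D_8(d)}$. Their images are the corresponding products of the $\psi_i$.
\end{enumerate}
This shows at once that $F_k,G_k,H_k$ have order exactly $0,1,2$, and that the $\psi_i$ freely generate the Wirthm\"uller ring. The only genuine computation left is the lower bound on the orders. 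You can get it either directly from the $q$-expansions or via $\iota^*$ together with the Jacobi-level injectivity above.
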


\begin{remark}
Lowering the degree of the constant term (with respect to $x$) in the Weierstrass equation~\eqref{eq:D8} for the Vinberg family of  $H \oplus D_8(-1)$-polarized K3 surfaces yields the families with transcendental lattices  $H^{\oplus 2} \oplus D_n(-1)$ for $4 \le n \le 7$; see Remarks \ref{rem:11-14}, \ref{rem:15}, \ref{rem:16}. For these smaller families, the expressions for the coefficients simplify considerably. We determine them explicitly in Appendix~\ref{sec:coefficients}.
\end{remark}
\appendix
\section{Pullbacks of the Vinberg family}
\label{sec:coefficients}
Lowering the degree of the constant term (with respect to $x$) in the Weierstrass equation~\eqref{prop:WEQ_rank10} in Proposition \ref{prop:WEQ_rank10} for the Vinberg family of  $H \oplus D_8(-1)$-polarized K3 surfaces yields the families with transcendental lattice $H^{\oplus 2} \oplus D_n(-1)$ for $4 \le n \le 7$. For these smaller families, the expressions we found for the coefficients simplify considerably. \\

(We omit the case $n=3$, because $D_3 = A_3$ appeared in the $A_n$-family in \cite{MR4987329}.)
\subsection{\texorpdfstring{$n=4$}{n equals 4}} 
\label{ssec:4}
The coefficients of the Weierstrass model $$y^2 = x^3 + (u^3 + F_4 u + F_6) x^2 + (G_8 u^2 + G_{10} u + G_{12}) x + H_{16} u + H_{18}$$ are Gritsenko lifts on the discriminant kernel of $T = H^{\oplus 2} \oplus D_4(-1)$. \\

Recall that $D_4^{\vee}/D_4 \cong \mathbb{Z}^2 / 2 \mathbb{Z}^2$ has size four. We label the cosets $0$ and $v_1, v_2, v_3$. The cosets $v_i$ have norm $1/2 + \mathbb{Z}$, and the orthogonal group of the discriminant form $\mathrm{O}(D_4^{\vee}/D_4) \cong S_3$ acts as permutations of them. The cosets $v_1, v_2, v_3$ are therefore essentially indistinguishable. However, most of the coefficients are not  $\mathrm{O}(D_4^{\vee}/D_4)$-invariant; they require the choice of a distinguished coset (say, $v_3$).
The map $\mathrm{O}(D_4) \rightarrow \mathrm{O}(D_4^{\vee}/D_4)$ is not surjective, but its image contains the order three permutations. Let $t$ (for triality) be any element of $\mathrm{O}(D_4)$ that acts as the permutation $$t : (v_1,v_2,v_3) \mapsto (v_2,v_3,v_1),$$ and note that $t$ induces a map $$t^* : M_*(\mathrm{O}^+(T)) \longrightarrow M_*(\mathrm{O}^+(T)), \quad f(Z) \mapsto f(tZ).$$

The coefficients are given as follows:

\begin{align*} F_4 &= -3 E_4 = -720 \cdot \mathrm{Lift} \left( \begin{gathered}  \; (1 + 24q + 24q^2 + 96q^3 + 24q^4 + ...) e_0 \\ +(8q^{1/2} + 32q^{3/2} + 48q^{5/2} + 64q^{7/2} + ...) (e_{v_1} + e_{v_2} + e_{v_3}). \end{gathered} \right); \\
F_6 &= 2 E_6 = -1008 \cdot \mathrm{Lift} \left( \begin{gathered}  \; (1 - 144q - 912q^2 - 4032q^3 - 7056q^4 - ...) e_0 \\ +(-16q^{1/2} - 448q^{3/2} - 2016q^{5/2} - 5504q^{7/2} - ...) (e_{v_1} + e_{v_2} + e_{v_3}). \end{gathered} \right); \\
G_8 &= 12^4 \cdot \mathrm{Lift} \left( \begin{gathered} \; 0 \cdot e_0 \\ + (q^{1/2} - 12q^{3/2} + 54q^{5/2} - 88q^{7/2} \pm ...) (e_{v_1} + e_{v_2}) \\ + (-2q^{1/2} + 24q^{3/2} - 108 q^{5/2} + 176q^{7/2} \pm ...) e_{v_3} \end{gathered} \right);\\
G_{10} &= -2 \cdot 12^4 \cdot \mathrm{Lift}\left( \begin{gathered} \; (-24q + 192q^2 - 288q^3 - 1536q^4 + 5040q^5 \pm ...) e_0 \\ + (q^{1/2} + 12q^{3/2} - 210q^{5/2} + 1016q^{7/2} \pm ...) (e_{v_1} + e_{v_2} + e_{v_3}) \end{gathered} \right);\\
G_{12} &= 12^4 \cdot \mathrm{Lift}\left( \begin{gathered} \; (96q + 1536q^2 - 14976q^3 + 24576q^4 + 83520q^5 \pm ...) e_0 \\ + (q^{1/2} - 540q^{3/2} + 2406q^{5/2} + 4424q^{7/2} \pm ...) (e_{v_1} + e_{v_2}) \\ + (4q^{1/2} + 144q^{3/2} + 408q^{5/2} - 14560q^{7/2} \pm ...) e_{v_3} \end{gathered} \right);\\
H_{16} &= 12^8 \Phi_{1,3} \Phi_{2,3}, \\ &\quad \Phi_{i,j} = \mathrm{Lift} \Big( (q^{1/2} - 12q^{3/2} + 54q^{5/2} - 88q^{7/2} \pm ...) (e_{v_i} - e_{v_j}) \Big);\\
H_{18} &= \frac{1}{150}F_4^2 G_{10} + \frac{1}{3} G_8 G_{10} + \frac{62208}{25}\Phi_{18}, \\ &\quad \Phi_{18} = \mathrm{Lift} \left( \begin{gathered}  \; (-24q - 11328q^2 - 1394208q^3 - 13456896q^4 - ...) e_0 \\ +(q^{1/2} + 492q^{3/2} + 67470q^{5/2} + 1693496q^{7/2} + ...) (e_{v_1} + e_{v_2} + e_{v_3}). \end{gathered} \right). \end{align*}

There are three Borcherds products $\psi_1, \psi_2, \psi_3$ of weight $8$, with zeros on the Heegner divisors $r^{\perp}$ of discriminant $-1/2$ and class $r \in v_i + D_4$, also represented by (additive) Gritsenko lifts $$\psi_1 = 12^4 \Phi_{2,3}, \; \psi_2 = 12^4 \Phi_{3,1}, \psi_3 = 12^4 \Phi_{1, 2},$$ and satisfying $\psi_1 + \psi_2 + \psi_3 = 0,$ and the group $\mathrm{O}(D_4^{\vee}/D_4)$ naturally acts on them by permutations. We can express $$G_8 = \psi_1 - \psi_2$$ and $$G_{12} = E_4 (\psi_2 - \psi_1) + 2 \cdot 12^4 \Phi_{12}, \quad \Phi_{12} = \mathrm{Lift}\left( \begin{gathered} \; (48q + 768q^2 - 7488q^3 \pm ...) e_0 \\ + (q^{1/2} - 156q^{3/2} + 870q^{5/2} \pm ...) (e_{v_1} + e_{v_2} + e_{v_3}) \end{gathered} \right);$$ $$H_{16} = -\psi_1 \psi_2;$$

The model therefore becomes $$y^2 = x^3 + (u^3 - 3E_4 u + 2E_6) x^2 + \Big((\psi_1 - \psi_2) (u^2 - E_4) + G_{10} u + \tilde G_{12}) x - \psi_1 \psi_2 u + \frac{1}{3} (\psi_1 - \psi_2) G_{10} + \tilde H_{18}$$ where the remaining coefficients $E_4, E_6, G_{10}, \tilde G_{12}, \tilde H_{18}$ are $t$-invariant.
We obtain a completely $t$-invariant Weierstrass equation by reversing the roles of the fiber and base variables $x,u$ and converting to short Weierstrass form: \begin{align*} Y^2 &= U^3 + (f_4 X^4 + f_{10} X^3 + f_{16} X^2)U + (X^7 + g_6 X^6 + g_{12} X^5 + g_{18} X^3 + g_{24} X^3) \\ &= U^3 + \left(-3E_4 X^4 + G_{10} X^3 + \frac{\psi_1 \psi_2 + \psi_2 \psi_3 + \psi_3 \psi_1}{3} X^2 \right) U \\ &\quad + \left( X^7 + 2 E_6 X^6 + \tilde G_{12} X^5 + \tilde H_{18} X^4 - \frac{(\psi_1 - \psi_2)(\psi_2 - \psi_3)(\psi_3 - \psi_1)}{27} X^3 \right).\end{align*}
The coefficients $f_4, f_{10}, f_{16}, g_6, g_{12}, g_{18}, g_{24}$ of the above Jacobian fibration over $\mathbb{P}^1_{(X)}$ generate the free algebra of modular forms for the full group $\mathrm{O}^+(H^{\oplus 2} \oplus D_4(-1))$. This is the free algebra associated to the $F_4$ root system as described in \cite{WW20}.
\subsection{\texorpdfstring{$n=5$}{n equals 5}} 
The discriminant form $D_5^{\vee}/D_5$ is cyclic, generated by a coset $\ell$ of norm $3/8 + \mathbb{Z}$. 

\begin{align*}
F_4 &= -720 \cdot \mathrm{Lift}\left( \begin{gathered} \; (1 + 12q + 6q^2 + 24q^3 + ...) e_0 \\ + (4q^{3/8} + 12q^{11/8} + 12q^{19/8} + ...) (e_{\ell} + e_{-\ell}) \\ + (6 q^{1/2} + 8q^{3/2} + 24q^{5/2} + ...) e_{2\ell} \end{gathered} \right); \\
F_6 &= -1008 \cdot \mathrm{Lift} \left( \begin{gathered} \; (1 - 108q - 450q^2 - 1656q^3 - ...) e_0 \\ + (-8q^{3/8} - 216q^{11/8} - 792q^{19/8} - ...) (e_{\ell} + e_{-\ell}) \\ + (-18q^{1/2} - 232q^{3/2} - 1080q^{5/2} - ...) e_{2\ell}\end{gathered} \right); \\
G_8 &= \frac{12^4}{2} \cdot \mathrm{Lift}\left( \begin{gathered} \; (8q - 64q^2 + 144q^3 + 128q^4 \pm ...) e_0 \\ + (q^{3/8} - 13q^{11/8} + 67q^{19/8} - 156q^{27/8} \pm ...) (e_{\ell} + e_{-\ell}) \\ + (-4q^{1/2} + 32q^{3/2} - 80q^{5/2} + 300q^{9/2} \pm ...) e_{2\ell} \end{gathered} \right);  \\
G_{10} &= -12^4 \cdot \mathrm{Lift} \left( \begin{gathered} \; (-52q + 128q^2 + 888q^3 - 3328q^4 \pm ...) e_0 \\ +(q^{3/8} + 11q^{11/8} - 221q^{19/8} + 1236 q^{27/8} \pm ...) (e_{\ell} + e_{-\ell}) \\ +(2q^{1/2} + 128q^{3/2} - 680q^{5/2} + 3930q^{9/2} \pm ...) e_{2\ell} \end{gathered} \right); \\
G_{12} &= \frac{12^4}{2} \cdot \mathrm{Lift}\left( \begin{gathered} \; (176q + 3200q^2 - 19104q^3 + 24320q^4 \pm ...) e_0 \\ +(q^{3/8} - 541q^{11/8} + 2947q^{19/8} + 1476q^{27/8} \pm ...) (e_{\ell} + e_{-\ell}) \\ + (8q^{1/2} - 64q^{3/2} - 5600q^{5/2} + 9216q^{7/2} \pm ...) e_{2\ell} \end{gathered} \right); \\ H_{14} &= \frac{12^7}{4} \Phi_7^2, \quad \Phi_7 = \mathrm{Lift} \left( (q^{3/8} - 9q^{11/8} + 27q^{19/8} - 12q^{27/8} \pm ...) (e_{\ell} - e_{-\ell})\right); \\
H_{16} &= \frac{1}{75} F_4^2 G_8 + \frac{1}{6} G_8^2 - \frac{31104}{25} \mathrm{Lift} \left( \begin{gathered} \; (8q + 3776q^2 + 464784q^3 + 4508288q^4 \pm ...) e_0 \\ + (q^{3/8} + 467q^{11/8} + 55747q^{19/8} + 277284q^{27/8} \pm ...) (e_{\ell} + e_{-\ell}) \\ + (-4q^{1/2} - 1888q^{3/2} - 232400q^{5/2} - 2257920q^{7/2} \pm ...) e_{2\ell}\end{gathered} \right); \\
H_{18} &= \frac{1}{150} F_4^2 G_{10} - \frac{3}{154} F_4 F_6 G_8 - \frac{1}{132} G_8G_{10} + \frac{62208}{1925} \Phi_{18}, \\ &\quad \Phi_{18} = \mathrm{Lift} \left( \begin{gathered} \; (-2302q - 874432 q^2 - 81573612q^3 - 545799808q^4 \pm ...) e_0 \\ + (q^{3/8} + 29291 q^{11/8} + 7526179q^{19/8} + 192138516q^{27/8} \pm ...) (e_{\ell} + e_{-\ell}) \\ + (227q^{1/2} + 1088q^{3/2} - 12887900q^{5/2} - 244316160q^{7/2} \pm ...) e_{2\ell} \end{gathered} \right).
\end{align*}
\subsection{\texorpdfstring{$n=6$}{n equals 6}} 
The discriminant group $D_6^{\vee}/D_6 = \{0, u_1, u_2, v\}$ is of the form $\mathbb{Z}^2 / 2 \mathbb{Z}^2$; besides $0$, it contains two elements $u_1, u_2$ of norm $1/4 + \mathbb{Z}$ that are indistinguishable under $\mathrm{O}(D_6^{\vee}/D_6)$, and one coset $v$ of norm $1/2 + \mathbb{Z}$.

\begin{align*}
F_4 &= -720 \cdot \mathrm{Lift} \left(\begin{gathered}  \; (1 + 4q + 4q^2 + 4q^4 + ...) e_0 \\ + (2q^{1/4} + 4q^{5/4} + 2q^{9/4} + 4q^{9/4} + ...) (e_{u_1} + e_{u_2}) \\ + (4q^{1/2} + 8q^{5/2} + 4q^{9/2} + ...) e_v\end{gathered} \right); \\
F_6 &= -1008 \cdot \mathrm{Lift} \left( \begin{gathered} \; (1 - 68q - 260q^2 - 480q^3 - 1028q^4 - ...) e_0 \\ + (-4q^{1/4} - 104q^{5/4} - 292q^{9/4} - 680q^{13/4} - ...) (e_{u_1} + e_{u_2}) \\ + (-20q^{1/2} - 96q^{3/2} - 520q^{5/2} - 576q^{7/2} - ...) e_v \end{gathered} \right); \\
G_8 &= 5184 \cdot \mathrm{Lift} \left( \begin{gathered} \; (32q - 128q^2 + 512q^4 \pm ...) e_0 \\ + (q^{1/4} - 14q^{5/4} + 81q^{9/4} - 238q^{13/4} \pm ...) (e_{u_1} + e_{u_2}) \\ + (-8q^{1/2} + 112q^{5/2} - 648q^{9/2} \pm ...) e_v  \end{gathered} \right); \\
G_{10} &= -10368 \cdot \mathrm{Lift} \left( \begin{gathered} \; (-112q - 704q^2 + 1920q^3 + 4352q^4 \pm ...) e_0 \\ + (q^{1/4} + 10q^{5/4} - 231q^{9/4} + 1466q^{13/4} \pm ...) (e_{u_1} + e_{u_2}) \\ + (4q^{1/2} + 480q^{3/2} + 40q^{5/2} - 4800q^{7/2} \pm ..._) e_v \end{gathered} \right); \\
G_{12} &= -\frac{3}{10} F_4 G_8 + \frac{2592}{5} \cdot \mathrm{Lift} \left( \begin{gathered} \; (2912q + 11392q^2 - 192000q^3 + 1091072q^4 \pm ...) e_0 \\ + (q^{1/4} - 7454q^{5/4} + 44961q^{9/4} + 18722q^{13/4} \pm ...) (e_{u_1} + e_{u_2}) \\ + (232q^{1/2} + 9600q^{3/2} - 116528q^{5/2} + 134400q^{7/2} \pm ...) e_v \end{gathered} \right); \\
H_{12} &= -\frac{12^6}{4} \Phi_6^2, \quad \Phi_6 = \mathrm{Lift} \left( (q^{1/4} - 6q^{5/4} + 9q^{9/4} + 10q^{13/4} \pm ...) (e_{u_1} - e_{u_2}) \right); \\
H_{14} &= -\frac{1}{14}F_6 G_8 + \frac{5184}{7} \cdot \mathrm{Lift} \left( \begin{gathered} \; (32q - 16256q^2 - 467712q^3 - 1805824q^4 \pm ...) e_0 \\ + (q^{1/4} - 518q^{5/4} - 9495q^{9/4} + 68810q^{13/4} \pm ...) (e_{u_1} + e_{u_2}) \\ + (-8q^{1/2} + 4032q^{3/2} + 133168q^{5/2} + 927360q^{7/2} \pm ...) e_v\end{gathered} \right); \\
H_{16} &= \frac{1}{6} G_8^2 + \frac{1}{75} F_4^2 G_8 + \frac{19}{5} F_4 H_{12} - \frac{15552}{25} \Phi_{16}, \\ &\quad \Phi_{16} = \mathrm{Lift} \left( \begin{gathered} \; (32q + 15232q^2 + 1920000q^3 + 25682432q^4 \pm ...) e_0 \\ + (q^{1/4} + 466q^{5/4} + 55281q^{9/4} + 222002q^{13/4} \pm ...) (e_{u_1} + e_{u_2}) \\ + (-8q^{1/2} - 3840q^{3/2} - 495248q^{5/2} - 8348160q^{7/2} \pm ...) e_v\end{gathered} \right); \\
H_{18} &= \frac{1}{150} F_4^2 G_{10} + \frac{1}{3} G_8 G_{10} + 13 F_6 H_{12} + 3 F_4 H_{14} + \frac{15552}{25} \Phi_{18}, \\ &\quad \Phi_{18} = \mathrm{Lift} \left( \begin{gathered} (-112 q - 54464q^2 - 7271040q^3 - 160292608q^4 \pm ...) e_0 \\ + (q^{1/4} + 98q^{5/4} + 78249q^{9/4} + 1401266q^{13/4} \pm ...) (e_{u_1} + e_{u_2}) \\ + (4q^{1/2} + 2400q^{3/2} + 478120q^{5/2} + 33936960q^{7/2} \pm ...) e_v\end{gathered} \right).
\end{align*}
\subsection{\texorpdfstring{$n=7$}{n equals 7}} 
The discriminant group $D_7^{\vee}/D_7 = \langle \ell \rangle$ is cyclic, generated by a coset $\ell$ of norm $1/8$.

\begin{align*}
F_4 &= -720 \cdot \mathrm{Lift} \left( \begin{gathered} \; (1 + 2q^2 + 2q^8 + ...) e_0 \\ + (q^{1/8} + q^{9/8} + q^{25/8} + ...) (e_{\ell} + e_{-\ell}) \\ + (2q^{1/2} + 2q^{9/2} + 2q^{25/2} + ...) e_{2\ell}\end{gathered} \right); \\
F_6 &= -1008 \cdot \mathrm{Lift} \left( \begin{gathered} \; (1 - 24q - 166q^2 - 144q^3 - 312q^4 - ...) e_0 \\ + (-2q^{1/8} - 50q^{9/8} - 96q^{17/8} - 242q^{25/8} - ...) (e_{\ell} + e_{-\ell}) \\ + (-22q^{1/2} - 48q^{3/2} - 144q^{5/2} - 192q^{7/2} - ...) e_{2\ell}\end{gathered} \right); \\
G_8 &= 2592 \cdot \mathrm{Lift} \left( \begin{gathered} \; (96 q + 128q^2 - 192q^3 - 768q^4 \pm ...) e_0 \\ + (q^{1/8} - 15q^{9/8} + 96q^{17/8} - 335q^{25/8} \pm ...) (e_{\ell} + e_{-\ell}) \\ + (-16q^{1/2} - 192q^{3/2} + 768q^{7/2} + 240q^{9/2} \pm ...) e_{2\ell}\end{gathered} \right); \\
G_{10} &= \frac{24}{35} F_4 F_6 - \frac{1728}{35} \mathrm{Lift} \left( \begin{gathered} \; (22 - 6048q - 275812q^2 - 2455488q^3 \pm ...) e_0 \\ + (q^{1/8} - 11255q^{9/8} - 371952q^{17/8} - 3107159q^{25/8} \pm ...) (e_{\ell} + e_{-\ell}) \\ + (-124q^{1/2} - 52416q^{3/2} - 913248q^{5/2} - 5693184q^{7/2} \pm ...) e_{2\ell}\end{gathered} \right); \\
H_{10} &= 62208 \Phi_5^2, \quad \Phi_5 = \mathrm{Lift} \left( (q^{1/8} - 3q^{9/8} + 5q^{25/8} - 7q^{49/8} \pm ...) (e_{\ell} - e_{-\ell}) \right); \\
G_{12} &= -\frac{3}{10} F_4 G_8 + \frac{1296}{5} \cdot \mathrm{Lift} \left( \begin{gathered} \; (4896q + 3968q^2 + 90048 q^3 + 2034432q^4 \pm ...) \\ + (q^{1/8} - 7455q^{9/8} + 52416q^{17/8} - 33695q^{25/8} \pm ...) (e_{\ell} + e_{-\ell}) \\ + (464q^{1/2} + 9408q^{3/2} - 241920q^{5/2} + 69888q^{7/2} \pm ...) e_{2\ell}\end{gathered} \right); \\
H_{12} &= -\frac{1}{30} F_4 G_8 - \frac{1296}{5} \cdot \mathrm{Lift} \left( \begin{gathered} \; (96q + 23168 q^2 + 237888q^3 + 874752q^4 \pm ...) e_0 \\ + (q^{1/8} + 225q^{9/8} - 1344q^{17/8} - 2975q^{25/8} \pm ...) (e_{\ell} + e_{-\ell}) \\ + (-16q^{1/2} - 4032q^{3/2} - 80640q^{5/2} - 521472q^{7/2} \pm ...) e_{2\ell}\end{gathered} \right); \\
H_{14} &= -\frac{1}{14} F_6 G_8 + 6 F_4 H_{10} + \frac{2592}{7} \mathrm{Lift} \left( \begin{gathered} \; (96q - 48256q^2 - 1661376q^3 - 13838592q^4 \pm ...) e_0 \\ + (q^{1/8} - 519q^{9/8} - 8976q^{17/8} + 77785q^{25/8} \pm ...) (e_{\ell} + e_{-\ell}) \\ + (-16q^{1/2} + 7872q^{3/2} + 362880 q^{5/2} + 5161728q^{7/2} \pm ...) e_{2\ell} \end{gathered} \right); \\
H_{16} &= 16 F_6 H_{10} + \frac{537}{125} F_4 H_{12} + \frac{1}{6} G_8^2 + \frac{1}{75} F_4^2 G_8 - \frac{7776}{25} \Phi_{16}, \\ &\quad \Phi_{16} = \mathrm{Lift} \left( \begin{gathered} \; (96q + 46208q^2 + 6005568q^3 + 108655872q^4 \pm ...) e_0 \\ + (q^{1/8} + 465q^{9/8} + 54816q^{17/8} + 167185q^{25/8} \pm ...) (e_{\ell} + e_{-\ell}) \\ + (-16q^{1/2} - 7872q^{3/2} - 1082880 q^{5/2} - 28691712q^{7/2} \pm ...) e_{2\ell} \end{gathered} \right); \\
H_{18} &= -\frac{469}{25} F_4^2 H_{10} - 94 G_8 H_{10} + 13 F_6 H_{12} + 3 F_4 H_{14} + \frac{1}{3} G_8 G_{10} + \frac{1}{150}
F_4^2 G_{10} + \frac{7776}{25} \Phi_{18}, \\ &\quad \Phi_{18} = \mathrm{Lift} \left( \begin{gathered} \; (-48q - 128704q^2 - 16804896q^3 - 524507520q^4 \pm ...) e_0 \\ +(q^{1/8} + 105q^{9/8} + 65616q^{17/8} + 1609225q^{25/8} \pm ...) (e_{\ell} + e_{-\ell}) \\ + (8q^{1/2} + 1056q^{3/2} + 1287360q^{5/2} + 100856448q^{7/2} \pm ...) e_{2\ell} \end{gathered} \right).\end{align*}
\section{Ideal for component II}
\label{sec:ideal}
The ideal of the image curve~\eqref{eqn:curve_componentII} is minimally generated by 17 weighted-homogeneous relations, of weights
\[
12,12,14,14,16,16,16,16,16,18,18,18,18,18,20,20,20.
\]
The weight-12 and weight-14 equations are:
\begin{equation}
\label{eqn:weight12_14}
\begin{aligned}
F_4G_8 + 4F_4H_8 + 3G_{12} + 3H_{12} &= 0, \\
F_4^3 + 12F_4H_8 + \frac{27}{4}F_6^2 - \frac{27}{2}G_{12} + 9H_{12} &= 0, \\
F_4H_{10} - \frac{7}{4}F_6H_8 + \frac{1}{4}H_{14} &= 0, \\
F_4G_{10} - 3F_6G_8 + 3F_6H_8 + 6H_{14} &= 0.
\end{aligned}
\end{equation}
The weight-16 equations are:
\begin{equation}
\begin{aligned}
F_6H_{10} + \frac{240}{49}G_8H_8 + \frac{720}{49}H_8^2 + \frac{8}{49}H_{16} &= 0,\\
F_4H_{12} + \frac{412}{147}G_8H_8 + \frac{265}{49}H_8^2 + \frac{93}{98}H_{16} &= 0, \\
F_4^2H_8 - \frac{37}{49}G_8H_8 + \frac{330}{49}H_8^2 - \frac{9}{98}H_{16} &= 0, \\
F_4G_{12} - \frac{3}{4}F_6G_{10} - \frac{20}{49}G_8H_8 - \frac{60}{49}H_8^2 + \frac{81}{49}H_{16} &= 0, \\
G_8^2 - \frac{138}{49}G_8H_8 - \frac{855}{49}H_8^2 + \frac{162}{49}H_{16} &= 0.
\end{aligned}
\end{equation}
The weight-18 equations are:
\begin{equation}
\begin{aligned}
G_8H_{10} - 4G_{10}H_8 - 9H_8H_{10} &= 0, \\
F_4H_{14} - \frac{31}{10}G_{10}H_8 - \frac{69}{5}H_8H_{10} + \frac{123}{20}H_{18} &= 0, \\
F_6H_{12} + \frac{229}{30}G_{10}H_8 + \frac{107}{5}H_8H_{10} + \frac{31}{20}H_{18} &= 0, \\
F_4F_6H_8 - \frac{9}{10}G_{10}H_8 + \frac{9}{5}H_8H_{10} - \frac{3}{20}H_{18} &= 0,\\
G_8G_{10} - \frac{69}{5}G_{10}H_8 - \frac{252}{5}H_8H_{10} + \frac{81}{5}H_{18} &= 0.
\end{aligned}
\end{equation}
The weight-20 equations are:
\begin{equation}
\label{eqn:weight20}
\begin{aligned}
F_4H_8^2 + 3H_8H_{12} - \frac{45}{32}H_{10}^2 &= 0, \\
G_{10}H_{10} - 16G_{12}H_8 - 9H_{10}^2 &= 0, \\
F_4H_{16} - \frac{49}{216}G_{10}^2 + \frac{23}{9}G_{12}H_8 + \frac{41}{24}H_{10}^2 &= 0.
\end{aligned}
\end{equation}
\bibliographystyle{amsplain}
\bibliography{bib.general}{}
\end{document}